\def\tsc#1{\csdef{#1}{\textsc{\lowercase{#1}}\xspace}}
\newcommand{\N}{\mathbb{N}}
\newcommand{\Kt}{K \llbracket t \rrbracket}
\newcommand{\ktp}{k' \llbracket t \rrbracket}
\newcommand{\Ktzp}{K_0 \llbracket t \rrbracket [z]}
\newcommand{\Ktzzp}{K_0 \llbracket t \rrbracket \llbracket z \rrbracket}
\newcommand{\kt}{k \llbracket t \rrbracket}
\newcommand{\OKt}{\mathcal{O}_{K} \llbracket t  \rrbracket}
\newcommand{\OKtp}{\mathcal{O}_{K_0} \llbracket t  \rrbracket}
\newcommand{\OKtzp}{\mathcal{O}_{K_0} \llbracket t  \rrbracket [z]}
\newcommand{\OKtzzp}{\mathcal{O}_{K_0} \llbracket t \rrbracket \llbracket z \rrbracket}
\newcommand{\OK}{\mathcal{O}_{K}}
\newcommand{\OKp}{\mathcal{O}_{K_0}}
\newcommand{\Ktp}{K_0 \llbracket t \rrbracket}
\newcommand{\vertIII}[1]{{\left\vert\kern-0.25ex\left\vert\kern-0.25ex\left\vert #1
    \right\vert\kern-0.25ex\right\vert\kern-0.25ex\right\vert}}
\newcommand{\softO}{\tilde O}
\newcommand{\cA}{\text{\rm A}}
\newcommand{\cM}{\text{\rm M}}
\newcommand{\cMM}{\text{\rm MM}}
\newcommand{\cC}{\text{\rm C}}
\newcommand{\cD}{\text{\rm D}}
\begin{document}

\newtheorem{thm}{Theorem}
\newtheorem{lem}[thm]{Lemma}
\newtheorem{cor}[thm]{Corollary}
\newtheorem{propo}{Proposition}
\newtheorem{prop}[thm]{Proposition}
\newdefinition{remark}{Remark}
\newproof{pf}{Proof}
\newproof{pot}{Proof of Theorem \ref{thm}}
\newtheorem{thmx}{Theorem}
\newtheorem{defi}[thm]{Definition}
\newtheorem*{prf}{Proof}

\let\WriteBookmarks\relax
\def\floatpagepagefraction{1}
\def\textpagefraction{.001}

\shorttitle{Efficient computation of Cantor's division polynomials}    

\title [mode = title]{Efficient computation of Cantor's division polynomials of hyperelliptic curves over finite fields}

\author[]{Elie Eid}
                       

\affiliation[]{organization={IRMAR, Université de Rennes 1}, 
               country={France}}

\begin{abstract}[S U M M A R Y]
Let $p$ be an odd prime number. We propose an algorithm for computing rational representations of isogenies between Jacobians of hyperelliptic curves via $p$-adic differential equations with a sharp analysis of the loss of precision. Consequently, after having possibly lifted the problem in the $p$-adics, we derive fast algorithms for computing explicitly Cantor's division polynomials of hyperelliptic curves defined over finite fields. 
\end{abstract}


\begin{keywords}
$p$-adic differential equations \sep Newton scheme \sep Arithmetic geometry \sep Isogenies \sep  Cantor's polynomials
\end{keywords}

\maketitle



\section{Introduction}
\label{sec:introduction}

An important aspect in the study of principally polarized abelian varieties over finite fields is to design effective algorithms to calculate the number of points on these varieties. In 1985, Schoof proposed the first deterministic polynomial time algorithm for counting points on elliptic curves~\cite{Schoof85}. A few years later, improvements were made by computing kernels of isogenies, resulting in the Schoof-Elkies-Atkin algorihm which is sufficiently fast for practical purposes~\cite{Atkin,Morain95,Schoof95}. In 1990, Pila gave a generalization of the classical Schoof algorithm to abelian varieties and in particular Jacobians of curves over finite fields~\cite{Pila90}. His algorithm remains impractical in the general case but improvements were made for varieties of small dimension, typically for Jacobians of genus $2$ and $3$ curves~\cite{Gaudry00,Gaudry04,abelard}. 
When the inputs are Jacobians of hyperelliptic curves, isogenies (for curves of low genus) and Cantor's division polynomials (for curves of arbitrary genus) are important ingredients to these algorithms. For this reason, a keen interest has been raised to compute them efficiently~\cite{bomosasc08,lava16,couezo15,careidler20,Eid2021}. In this work, we tackle, in all generality, the problem of effective computation of isogenies between Jacobians of hyperelliptic curves to obtain fast algorithms that compute Cantor's division polynomials.


\subsection{Isogenies and ($p$-adic) differential equations}
\label{subsec:Isogeniesanddifferentialeq}

A separable isogeny between Jacobians of hyperelliptic curves of genus $g$
defined over a field $k$ is characterized by its so called rational
representation $($see Section~\ref{subsec:RationalRepresentation} for the
definition$)$; it is a compact writing of the isogeny
and can be expressed by $2g$ rational fractions defined over a finite
extension of $k$. These rational fractions are related. In fields of characteristic different from $2$, they can be determined by computing an
approximation of the solution $X(t) \in \ktp{}^{g}$, where $k'$ is a finite extension of $k$ of degree at most $O(g!)$, of a first order non-linear
system of differential equations of the form
\begin{equation}
\label{eq:nonlinearsystem}
H \left( X(t) \right) \cdot X'(t) = G(t)
\end{equation}
where $H \! : \ktp{}^{g} \rightarrow {M}_g\!\left( \ktp \right)$ is a well chosen map and $G(t) \! \in \!  \ktp{}^g$. This approach is a generalization of the elliptic curves case \cite{lava16} for which Equation~\eqref{eq:nonlinearsystem} is solved in dimension one. 

Equation~\eqref{eq:nonlinearsystem} was first introduced in \cite{couezo15}
for genus two curves defined over finite fields of odd characteristic and
solved in \cite{kieffer20} using a well-designed algorithm based on a Newton
iteration; this
allowed them to compute $X(t)$ modulo $t^{O({\ell})}$ in the case of an
$(\ell , \ell )$-isogeny for a cost of $\softO (\ell )$ operations in $k$ then recover the
rational fractions that defines the rational representation of the
isogeny. This approach does not work when the characteristic of $k$ is positive and small compared to $\ell$, in which case divisions by $p$ occur and an error
can be raised while doing the computations. We take on this issue similarly as
in the elliptic curve case \cite{lesi08, careidler20} by lifting the problem
to the $p$-adics. We will always suppose that the lifted Jacobians are also
Jacobians for some hyperelliptic curves. It is relevant to assume this, even
though it is not the generic case when $g$ is greater than $3$\footnote{Indeed, the dimension of the moduli scheme $\mathcal{M}_g$ is equal to $3g-3$, while the subspace of hyperelliptic curves in it has dimension $2g-1$.} \cite{oort1986},
since it allows us to compute efficiently the rational representation of the
multiplication by an integer in which case the lifting can be done arbitrarily. After this process, we need to analyze the loss of $p$-adic
precision in order to solve Equation~\eqref{eq:nonlinearsystem} without having a
numerical instability. We extend the result of \cite{lava16} to compute isogenies between Jacobians of hyperelliptic curves, by proving that
the number of lost digits when computing an approximation of the solution of Equation~\eqref{eq:nonlinearsystem} modulo $t^{O(g\ell)}$, stays within $O( \log _p( g \ell) )$ (see Sections~\ref{sec:GeneralCase} and \ref{sec:alternantdifferentialsystem}). 


\subsection{Computing with $p$-padic numbers}
\label{subsec:comoutationalmodel}
We introduce the computation model that we will use throughout this paper. Let $p$ be a prime number and $K$ a finite extension of the $p$-adic field $\mathbb{Q}_p$. We denote by $\upsilon _p$ the unique normalized extension to $K$ of the $p$-adic valuation. We denote by $\OK$ the ring of integers of $K$, $\pi \in \OK$ a fixed uniformizer of $K$ and $e$ the ramification index of the extension $K/\mathbb{Q}_p$. We naturally extend the valuation $\upsilon _ p$ to quotients of $\OK$, the resultant valuation is also denoted by $\upsilon _ p$.

From an algorithmic point of view, $p$-adic numbers behave like real numbers: they are defined as infinite sequences of digits that cannot be handled by computers. It is thus necessary to work with truncations. For this reason, several computational models were suggested to tackle these issues (see \cite{caruso17} for more details). In this paper, we use the \emph{fixed point arithmetic model} at precision $O(p^M)$, where $M \in \mathbb{N}^*$, to do computations in $K$. More precisely, an element in $K$ is represented by an interval of the form $a+ O(p^M)$ with $a \in \OK/\pi ^{eM}\OK$. We define basic arithmetic operations on intervals in an elementary way
\begin{align*}
\big( x + O(p^M) \big) \pm \big( y + O(p^M) \big) & = (x \pm y) + O(p^M)\,, \\
\big( x + O(p^M) \big) \times \big( y + O(p^M) \big) & = xy + O(p^M)\,.
\end{align*}
For divisions we make the following assumption: for $x, y \in \OK/\pi^{eM}\OK$, the division of $x + O(p^M)$ by $y + O(p^M)$ raises an error if $\upsilon _p(y) > \upsilon _p(x)$, returns $0 + O(p^M)$ if $x = 0$ in $\OK/\pi^{eM}\OK$ and returns any representative $z +O(p^M)$ with the property $x = yz$ in $\OK/\pi^{eM}\OK$ otherwise.

\subsubsection*{Matrix computation}
We extend the notion of intervals to the $K$-vector space $M_{n,m} (K)$: an element in
$M_{n,m}\! \left(K\right)$ of the form $A+ O(p^M)$ represents a matrix
$\left( a_{ij} + O(p^M) \right)_{ij}$ with
$A = (a_{ij}) \in M_{n,m}\! \left(\OK/\pi ^{eM}\OK \right)$. Operations in
$M_{n,m}\! \left(K \right)$ are defined from those in $K$:
\begin{align*}
\left( A + O(p^M) \right) \pm \left( B + O(p^M) \right) &= ( A \pm B  ) + O(p^M)\,,\\
\left( A + O(p^M) \right) \cdot \left( B + O(p^M) \right) &= ( A  \cdot  B  ) + O(p^M)\,.
\end{align*}
For inversions, we use standard Gaussian elimination.

\renewcommand*{\thepropo}{}
\begin{propo}\cite[Proposition~1.2.4 and Th\'eor\`eme~1.2.6]{vaccon15} 
\label{prop:Jordan}
Let $A \in \text{GL}_n(\mathcal{O}_K)$ with entries known up to precision $O(p^M)$. The Gauss-Jordan algorithm computes the inverse $A^{-1}$ of $A$ with entries known with the same precision as those of $A$ using $O(n^3)$ operations in $K$. 
\end{propo}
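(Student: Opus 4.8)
The plan is to run a Gauss--Jordan elimination on the augmented matrix $[\,A \mid I_n\,]$ with a pivoting rule tailored to the $p$-adic setting, and then to verify two things separately: that every division performed during the reduction is a division by a \emph{unit} of $\mathcal{O}_K$ — so that, under the rules of the fixed point arithmetic model, no error is raised and no precision is lost — and that the number of operations in $K$ is the same as in the classical field case. First I would record the algebraic input: since $A \in \mathrm{GL}_n(\mathcal{O}_K)$, its determinant is a unit of $\mathcal{O}_K$, hence $A^{-1} = (\det A)^{-1}\,\mathrm{adj}(A)$ again has entries in $\mathcal{O}_K$; equivalently the reduction of $A$ modulo $\pi$ is invertible over the residue field $\mathbb{F} = \mathcal{O}_K/\pi\mathcal{O}_K$. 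This is what makes it plausible that $A^{-1}$ can be recovered with no loss of precision at all.

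The heart of the argument is a loop invariant. After $k$ steps of the reduction — each step consisting of a row swap bringing a suitable pivot to position $(k+1,k+1)$, the normalization of that row by the inverse of the pivot, and the clearing of the rest of the $(k+1)$-st column — the left block has the shape $\left(\begin{smallmatrix} I_k & * \\ 0 & B_k \end{smallmatrix}\right)$ with $B_k \in \mathrm{GL}_{n-k}(\mathcal{O}_K)$. One checks this by induction: $B_{k+1}$ is obtained from $B_k$ by a row permutation and additions of multiples of one row to other rows, followed by deleting the pivot row and column, and all of these operations preserve invertibility over $\mathcal{O}_K$ as soon as the chosen pivot is a unit. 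The invariant in turn supplies such a pivot at the next step: $B_k$ being invertible over $\mathcal{O}_K$, its reduction modulo $\pi$ is invertible over $\mathbb{F}$, so its first column does not vanish modulo $\pi$, i.e. at least one of its entries is a unit of $\mathcal{O}_K$, which we select as the pivot (performing the corresponding row swap). I expect this point — exhibiting a unit pivot and checking that the invariant propagates — to be the only genuinely delicate part of the proof.

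With the pivoting rule in place, the rest is routine bookkeeping. Every division carried out has the form $x/y$ with $y$ a pivot, hence a unit, so $\upsilon_p(y) = 0 \le \upsilon_p(x)$ for every numerator $x$; by the division rule of the fixed point model such a division raises no error and returns a representative known to precision $O(p^M)$, while additions and multiplications of intervals at precision $O(p^M)$ again have precision $O(p^M)$. Hence when the reduction terminates the right block equals $A^{-1}$ with each entry known modulo $p^M$, that is, at exactly the same precision as the entries of $A$.

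Finally, the cost count is identical to the classical one over a field: at step $k$ the search for a unit pivot uses $O(n-k)$ valuation comparisons, the normalization of the pivot row costs $O(n)$ operations in $K$, and clearing the column costs $O\big(n(n-k)\big)$ operations; summing over $0 \le k < n$ yields $O(n^3)$ operations in $K$. Alternatively, one may simply invoke \cite[Proposition~1.2.4 and Th\'eor\`eme~1.2.6]{vaccon15}.
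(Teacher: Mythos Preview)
The paper does not give its own proof of this proposition; it is stated as a citation of \cite[Proposition~1.2.4 and Th\'eor\`eme~1.2.6]{vaccon15} and used as a black box. Your sketch is a correct and self-contained outline of the standard argument: the loop invariant $B_k \in \mathrm{GL}_{n-k}(\mathcal{O}_K)$ guarantees a unit pivot at every step, so every division in the fixed-point model is by an element of valuation $0$ and hence lossless, and the operation count is the classical $O(n^3)$. In effect you have supplied the proof that the paper chose to outsource.
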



\subsection{Main result}
We are interested in designing fast algorithms that solve Equation~\eqref{eq:nonlinearsystem}. In a first step, we arrive at a generic algorithm for solving the differential system. Its complexity depends on the complexity of matrix multiplication and the composition $H(X(t))$. Let $\cMM (g, n)$ be the number of arithmetical operations required to compute the product
of two $g\times  g$ matrices containing polynomials of degree bounded by $n$. Our first theorem is the following.

\renewcommand*{\thethmx}{\Alph{thmx}}

\begin{thmx}[See Theorem~\ref{thm:mainthmbis} and Proposition~\ref{prop:complexity}]
\label{thm:main1}
Let $p$ be a prime number and $g \geq 1$ be an integer. Let $K$ be a finite extension of $\mathbb{Q}_p$ and $\OK$ be its ring of integers. There exists an algorithm that takes as input:
\begin{itemize}
\item two positive integers $n$ and $N$,
\item an analytic map $H \! : \OKt{}^{g} \rightarrow {M}_g\!\left( \OKt \right)$ of the form $H(y_1(t), \cdots , y_g(t)) = (f_{ij}(y_i(t)))_{ij}$ with $f_{ij} \in \OKt$ and $H(0) \in \text{GL}_g(\OK)$,
\item a vector $G(t) \in   \OKt{}^g$,
\end{itemize}
and, assuming that the unique solution of the differential equation
\begin{equation*}
H \left( X(t) \right) \cdot X'(t) = G(t)
\end{equation*}
is in $\left( t\OKt \right) ^g$, outputs an
approximation of this solution modulo $( p ^N , t^{n+1} )$ for a cost
\\ $O \left( \cMM(g,n) + \cC _H(n) \right)$ operations in $\OK$, where $\cC _H(n)$ denotes the algebraic complexity of an algorithm computing the composition $H(X(t)) \mod t^{n+1}$, at precision $O(p^M)$
with $M = \max ( N , 3) + \lfloor \log _ p (n) \rfloor$ if $p=2$,
$M=\max ( N , 2) + \lfloor \log _ p (n) \rfloor$ if $p=3$ and
$M= N + \lfloor \log _ p (n) \rfloor$ otherwise.
\end{thmx}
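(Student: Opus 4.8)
The plan is to solve the differential equation by a Newton iteration on the truncation order in $t$, doubling the $t$-adic precision at each step, while carefully tracking the $p$-adic precision that is consumed. First I would rewrite the equation in integral form: since $H(0) \in \mathrm{GL}_g(\OK)$ and $X(0)=0$, the equation $H(X(t))\cdot X'(t) = G(t)$ is equivalent to $X(t) = \int_0^t H(X(u))^{-1} G(u)\, du$, where the integral is the formal termwise antiderivative of a power series. The obstacle with naive Picard iteration is that each integration divides the coefficient of $t^k$ by $k$, which loses $\upsilon_p(k)$ digits and, over $n$ steps, would cost $O(\log_p(n!))$ digits of precision rather than the claimed $O(\log_p n)$. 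The resolution, following the strategy of \cite{lava16}, is to use a Newton scheme: if $Y(t)$ is an approximate solution correct modulo $t^{m+1}$, one linearizes the equation around $Y$ and shows that the correction $\delta(t) \in t^{m+1}\OKt$ satisfies a \emph{linear} inhomogeneous differential equation, which can be solved by a single integration combined with the already-known fundamental matrix of the linearized system; doubling $m$ at each stage yields $O(\log n)$ Newton steps and the integrations contribute a total loss that telescopes to $O(\log_p n)$.

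The key steps, in order, are as follows. (1) Set up the Newton operator: given $Y$ with $H(Y)Y' - G \equiv 0 \pmod{t^m}$, write $X = Y + \delta$ and expand $H(Y+\delta) = H(Y) + (\text{Jacobian terms})\cdot\delta + O(\delta^2)$; because $H$ has the special shape $H(y_1,\dots,y_g) = (f_{ij}(y_i))_{ij}$, the Jacobian of $y\mapsto H(y)Y'$ is explicit and its entries lie in $\OKt$. (2) Show that, modulo $t^{2m}$, the correction $\delta$ solves a linear ODE $\delta' = A(t)\delta + b(t)$ with $A, b$ computable from $Y$; solve it via the variation-of-constants formula using a fundamental solution matrix $W(t)$ of $\delta' = A\delta$, which itself is refined by the same doubling process (this is the standard ``solve a linear differential equation by Newton iteration'' subroutine). (3) Bound the precision loss: each application of the integration operator $\int_0^t$ to a series known modulo $(p^M, t^{n+1})$ returns a series known modulo $(p^{M - \lfloor \log_p n\rfloor}, t^{n+1})$ at worst, and one must check that across all the Newton stages the losses do not accumulate — the point is that the integration is applied to the full series only a bounded number of times, not once per coefficient, so the total loss is $\lfloor \log_p n \rfloor$ plus the $p$-specific constant ($3$ for $p=2$, $2$ for $p=3$, coming from inverting $H$ and from the $2$-adic or $3$-adic valuations appearing in the geometric-series expansions of $H(Y+\delta)^{-1}$). (4) Assemble the complexity: at truncation order $n$ the dominant costs are one $g\times g$ matrix product of degree-$n$ polynomials, namely $\cMM(g,n)$, one composition $H(X(t)) \bmod t^{n+1}$, namely $\cC_H(n)$, and $O(n^3)$ for the matrix inversions by Proposition~\ref{prop:Jordan}; summing the geometric series over the $O(\log n)$ stages (each cost being superadditive in $n$) gives $O(\cMM(g,n) + \cC_H(n))$, absorbing the inversion and lower-order terms.

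The main obstacle I expect is step (3): getting the precision loss \emph{sharp}, i.e. exactly $\lfloor \log_p n\rfloor$ plus a small explicit constant rather than a loose $O(\log_p n)$. This requires a delicate invariant maintained through the Newton iteration — one must track not just the $t^m$-adic accuracy of $Y$ but also the exact $p$-adic valuation of the ``error'' series $H(Y)Y' - G$ and of the intermediate quantities $W(t)$, $W(t)^{-1}$, and show that the integration step never divides by a power of $p$ larger than $n$ itself. The separate treatment of $p=2$ and $p=3$ signals that the geometric-series inversions $(I + N)^{-1} = \sum (-N)^k$ used to invert $H(Y+\delta)$ modulo higher powers of $t$ introduce denominators whose $p$-adic valuation one must control, and small primes need the extra additive slack; pinning down these constants is the technical heart of the argument and is presumably where Theorem~\ref{thm:mainthmbis} does its real work.
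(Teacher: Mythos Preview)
Your high-level plan (Newton doubling on the $t$-adic order, complexity via superadditivity) is right, but two specific design choices diverge from the paper and would make your argument harder than necessary.

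First, the Newton step is simpler than you propose. You set up a general linear ODE $\delta' = A\delta + b$ and solve it by variation of constants with a fundamental matrix $W(t)$ that is itself maintained through the iteration. The paper observes instead that, for $H$ of the special shape $H(y) = (f_{ij}(y_i))_{ij}$, the linearized left-hand side is an exact derivative:
\[
H(X_m)\,h' + dH(X_m)(h)\cdot X_m' \;=\; \bigl(H(X_m)\cdot h\bigr)'.
\]
Hence the correction is just $h = H(X_m)^{-1}\int\bigl(G - H(X_m)X_m'\bigr)\,dt$: one integration and one multiplication by $H(X_m)^{-1}$, with no fundamental matrix to carry along. This matters for precision because $H(X_m)^{-1}\in \mathrm{GL}_g(\OKt)$ under the stated hypotheses, whereas a fundamental solution $W(t)$ of $\delta'=A\delta$ would itself acquire $p$-adic denominators from integration.

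Second, the precision analysis in the paper is not a step-by-step tracking of valuations through the iteration, but an \emph{a posteriori} argument via the differential-precision framework of Caruso--Roe--Vaccon. One views the solution as a locally analytic map $X_n: G \mapsto X(G)$, computes its differential $dX_n(G)(\delta G) = H(X_n(G))^{-1}\int\delta G$, and checks this is an isometry for suitable norms $\|\cdot\|_{E_n}, \|\cdot\|_{F_n}$ (integration is the norm change, the matrix factor preserves the unit ball). A general precision lemma then gives $X_n(G+B(\delta)) = X_n(G) + B(\delta)$ for all $\delta < p^{-2/(p-1)}$. The proof concludes by setting $G_n := H(X_n)X_n'$ (the ``effective'' $G$ that the computed $X_n$ exactly solves), noting $\|G-G_n\|_{E_n}\le p^{-M+\lfloor\log_p n\rfloor}$, and applying the lemma. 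The constants for $p=2,3$ come from the threshold $2/(p-1)$ in that lemma, not from geometric-series inversions as you suggest: for $p\ge 5$ one has $2/(p-1)<1$ and no extra slack is needed, while $p=3$ and $p=2$ require $\delta < p^{-1}$ and $\delta < p^{-2}$ respectively. Your direct-tracking approach might be made to work, but it is exactly the ``main obstacle'' you flag, and the paper sidesteps it entirely.
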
 

It is important to know that one can do a bit better for $p=2$ and $3$ if we follow the same strategy as \cite{lava16}, in
this case $M$ is equal to $ \max ( N , 2) + \lfloor \log _ p (n) \rfloor$ if
$p=2$ and $ N+ \lfloor \log _ p (n) \rfloor$ otherwise. For the sake of simplicity, we will not prove this here. 

The field $K$ in the algorithm of Theorem~\ref{thm:main1} is generally given with a uniformizing parameter to avoid the calculation of its ring of integers. However, for the computation of isogenies, this ring is already known since the extension $K$ can always be chosen to be unramified.

If the integer $g$ is assumed to be small then the function $\cC _H(n)$ depends only on the number of arithmetical operations required to compute the composition of two power series modulo $t^{n+1}$. In the general case, Kedlaya and Umans bound~\cite{KU11} allows us to obtain $\cC _H(n) = \softO (n)$. In our context, the composition  $H(X(t)) \mod t^{n+1}$ is easy to compute since $H$ only includes univariate rational fractions of radicals of constant degree, therefore $\cC _H(n) = \cM(n)$, where $\cM (n)$ is the number of arithmetical operations required to compute the product of two polynomials of degrees bounded by $n$. Moreover, $ \cMM (g,n) = \cM(n)$, hence if $g$ is small, the algorithm in Theorem~\ref{thm:main1} requires at most $\softO ( n \cdot [ K : \mathbb Q _p])$ operations in $\mathbb Z_p$ to compute $X(t) \mod t^{n+1}$.

If $g$ is arbitrary then the algorithm of Theorem~\ref{thm:main1} requires at most $\softO(g^\omega  n \cdot [K : \mathbb Q_p])$ operations in $\mathbb Z_p$, where $\omega \in ]2,3]$ is a feasible exponent of matrix multiplication. This complexity can be reduced to $\softO(g n \cdot [K:\mathbb Q_p])$ operations in $\mathbb Z_p$ if $H$ is given by a structured matrix.\\
In the case of the computation of a rational representation of an isogeny $I$ over an unramified extension $K_0$ of $\mathbb Q_p$, the field $K$ can be chosen to be an extension of $K_0$ of degree at most $O(g)$ and $H$ is given by an alternant matrix. Therefore, the algorithm of Theorem~\ref{thm:main1} performs at most $\softO(g^2 n \cdot [K_0:\mathbb Q_p])$ operations in $\mathbb Z_p$ to compute an approximation of $X(t) \mod t^{n+1}$; but this is not optimal in $g$ since $I$ is defined over $K_0$. \\ In order to remedy this problem, we work directly on the first Mumford coordinate of a rational representation of $I$, \emph{i.e.} the degree $g$ monic polynomial whose roots are the components of the solution $X(t)$, which has the decisive advantage to be defined over the base field $K_0$. Consequently, we obtain a fast algorithm for computing a rational representation of $I$ in quasi-linear time. This is the main result of Section~\ref{sec:alternantdifferentialsystem}.

\begin{thmx}[See Theorem~\ref{thm:mainthmbis2} and Proposition~\ref{prop:complexityalternant}]
\label{thm:main2}
Let $p$ be an odd prime number. Let $K_0$ be an unramified extension of $\mathbb{Q}_p$ and $\OKp$ the ring of integers of $K_0$. There exists an algorithm that takes as input:
\begin{itemize}
\item three positive integers $g$, $n$ and $N$,
\item a monic polynomial $U_0 (z) = \prod \limits _{j=1}^g {(z - x_j^{(0)})} \in \OKp [z]$ such that $U_0(z) \mod p$ is separable,
\item a polynomial $V_0 \in \OKp [z]$ of degree $g-1$ such that $V(x_j^{(0)}) \mod {p} \neq 0$ for all $j \in \{ 1 , \ldots , g\} $,
\item a polynomial $f \in \OKtzp$ of degree $O(g)$ such that $U_0$ divides $f - V_0^2$ in $\OKtzp$,
\item a vector $G(t) \in   \OKtp{}^g$,
\end{itemize}
and, assuming that the unique solution $X(t)$ of the differential equation
\begin{equation*}
\left\{
\begin{array}{l}
H(X(t)) \cdot  X'(t) = G(t), \quad X(0) = (x_1^{(0)} , \cdots , x_g^{(0)}), \\
y_j(t)^2 = f(x_j(t)), \quad y_j(0) = V_0(x_j^{(0)}) \, \quad \text{for }j= 1, \ldots , g 
\end{array}
\right.
\end{equation*}
where $H(X(t))$ is the matrix defined by
\begin{equation*}
H(X(t)) = \left(\dfrac{x_j(t)^{i-1}}{y_j(t)} \right) _{1 \leq i,j \leq g },
\end{equation*} 
is in $\OKt ^g$, where $K$ denotes the splitting field of $U_0$, outputs a polynomial $U(t,z)= \prod \limits _{i=1}^g {(z - x_i(t))} \in \OKtp[z]$ such that $(x_1(t) , \ldots , x_g(t))$ is an approximation of this solution modulo $( p ^N , t^{n+1} )$ for a cost
 $\softO \left( n g \right)$ operations in $\OKp$, at precision $O(p ^{M+ \lfloor \log _p (2g-1) \rfloor})$, with $M= \max (N,2) + \lfloor \log _ p(n) \rfloor$ if $p=3$ and $M = N + \lfloor \log _ p(n) \rfloor$ otherwise.
\end{thmx}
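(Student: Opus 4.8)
The plan is to transport the differential system from the root vector $X(t)$ to the pair of Mumford coordinates $(U(t,z),V(t,z))$, which lives over the base ring $\OKtp$, and then to run the Newton scheme underlying Theorem~\ref{thm:mainthmbis} entirely over $\OKp$ in the rank-$g$ quotient ring $\OKtp[z]/(U(t,z))$, so that the requested degree-$g$ polynomial $U(t,z)$ comes out directly. \emph{Step 1 (descent to $\OKp$).} First I would check that $U,V\in\OKtp[z]$. Let $K$ be the splitting field of $U_0$ and let $\sigma\in\mathrm{Gal}(K/K_0)$ induce the permutation $\pi_\sigma$ on the roots $x_1^{(0)},\dots,x_g^{(0)}$. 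Since $V_0\in\OKp[z]$ and $G\in\Ktp{}^g$, applying $\sigma$ to the system sends a solution $(x_j(t),y_j(t))_j$ to $\big(\sigma x_{\pi_\sigma(j)}(t),\,\sigma y_{\pi_\sigma(j)}(t)\big)_j$, again a solution with the same reordered initial data; by the assumed uniqueness, $\sigma$ only permutes $\{x_1(t),\dots,x_g(t)\}$. Hence the coefficients of $U(t,z)=\prod_j(z-x_j(t))$ are $\mathrm{Gal}(K/K_0)$-invariant and integral, so they lie in $\OKtp$; likewise the degree-$(g-1)$ interpolant $V(t,z)\in\OKtp[z]$ of $z=x_j(t)\mapsto y_j(t)$, which satisfies $V^2\equiv f\pmod U$ in $\OKtzp$ and $V(0,z)\equiv V_0(z)\pmod{U_0}$. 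The hypotheses ``$U_0\bmod p$ separable'' and ``$V_0(x_j^{(0)})\not\equiv 0\bmod p$'' say precisely that $\mathrm{disc}_z(U)$ and $\mathrm{Res}_z(U,2V)$ are units of $\OKtp$, which will make all the modular inversions below legal.

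\emph{Step 2 (an explicit ODE on $(U,V)$).} Next I would rewrite the system as a first-order ODE of the shape treated in Sections~\ref{sec:GeneralCase}--\ref{sec:alternantdifferentialsystem}, now over $\OKp$. From $\partial_tU(t,z)=-\sum_j x_j'(t)\,U(t,z)/(z-x_j(t))$ one gets $x_j'(t)=-\partial_tU(t,x_j)/\partial_zU(t,x_j)$, while the alternant equations $\sum_j x_j^{i-1}x_j'/y_j=G_i$ ($1\le i\le g$), rewritten with the residue identities $\sum_j x_j^{k}/\partial_zU(t,x_j)=h_{k-g+1}(x)$, produce a polynomial $R(t,z)\in\OKtp[z]$ of degree $<g$, related to $G$ by a unipotent triangular matrix over $\OKp$ with entries polynomial in the coefficients of $U$, such that $x_j'(t)=y_j(t)\,R(t,x_j)/\partial_zU(t,x_j)$. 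Lagrange-interpolation at the $x_j(t)$ then turns this into $\partial_tU\equiv -(VR\bmod U)$; differentiating the relations $V(t,x_j(t))=y_j(t)$ and $y_j(t)^2=f(t,x_j(t))$ and interpolating again produces a companion congruence $\partial_tV\equiv\Psi\bmod U$, where $\Psi$ is a polynomial in $U,V,R$ and the input $f$ (and its partial derivatives) and the only inverse occurring is $(2V)^{-1}\bmod U$. Thus $(U,V)$ solves an explicit ODE on the $2g$ unknown coefficients, with all denominators units of $\OKtp$ at $t=0$ by Step~1.

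\emph{Step 3 (algorithm and complexity).} I would then apply the doubling Newton iteration of Theorem~\ref{thm:mainthmbis}: starting from $(U_0,V_0)$, lift an approximation of $(U,V)$ modulo $t^m$ to one modulo $t^{2m}$ by solving the linearized system --- triangular in $t$, solved by integration in $t$ --- with all arithmetic performed in the free $\OKtp$-module $\OKtp[z]/(U(t,z))$ of rank $g$. A product or a reduction modulo $U$ there costs $O(\cM(g))$ multiplications of $t$-truncated series, and the modular inversions required at each step (of $2V$, and of the linearized operator) are obtained by a fast extended Euclidean algorithm in $z$ --- legal since the relevant resultants are units --- for the same cost, or, in the unstructured formulation, by Gauss--Jordan as in Proposition~\ref{prop:Jordan}. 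Summing the $\softO(2^k)$-length series arithmetic over the $O(\log n)$ steps, and absorbing the composition cost since $\deg f=O(g)$, yields $\softO(ng)$ operations in $\OKp$, as in Proposition~\ref{prop:complexityalternant}.

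\emph{Step 4 (precision --- the main obstacle).} What remains, and is the crux of the argument, is the sharp bound on the loss of $p$-adic precision. Integration in $t$ divides by $1,\dots,n$, contributing $\upsilon_p(\mathrm{lcm}(1,\dots,n))=\lfloor\log_p n\rfloor$ lost digits, and the $p=3$ refinement $\max(N,2)$ (versus $N$ for $p\ge5$) falls out of the same stability bookkeeping as in Theorem~\ref{thm:mainthmbis}. The genuinely new ingredient is the modular polynomial arithmetic over $\OKtp[z]/(U(t,z))$ that encodes the alternant structure and propagates the degree-$\le(2g-2)$ relation $V^2\equiv f\pmod U$: a careful look at the Euclidean and interpolation sub-steps, using the precision lemmas of Section~\ref{sec:GeneralCase}, reveals divisions by integers up to $2g-1$, costing an extra $\upsilon_p(\mathrm{lcm}(1,\dots,2g-1))=\lfloor\log_p(2g-1)\rfloor$ digits. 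One then verifies that running the whole computation in the fixed-point model at precision $O\big(p^{M+\lfloor\log_p(2g-1)\rfloor}\big)$, with $M$ as in the statement, never raises a spurious division error, and returns $U(t,z)$ --- hence, by Hensel and the separability of $U_0\bmod p$, its roots $x_i(t)$ --- correct modulo $(p^N,t^{n+1})$. The hard part is to show that these two precision losses \emph{add} rather than compound: one has to follow, along the Newton recursion, the interplay between the $t$-integrations and the valuations appearing in the subresultant chain of $(U,V)$ and of the linearized operator, and prove that $\lfloor\log_p(2g-1)\rfloor$ --- and nothing worse --- suffices.
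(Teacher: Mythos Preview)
Your high-level strategy --- descend from the root vector $X(t)$ to the Mumford pair $(U,V)$ over $\OKp$ and run a Newton scheme there --- is the same as the paper's. But the implementation you sketch diverges from what the paper actually does, and the divergence creates two real gaps.

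\medskip
\textbf{The paper does not derive a standalone ODE for $(U,V)$.} It keeps the \emph{original} Newton iteration on $X$,
\[
H(X_m)\,(X_n-X_m)=\int\bigl(G-H(X_m)\,X_m'\bigr)\,dt,
\]
and shows that each of the three pieces can be evaluated from $U_m$ alone, over $\OKp$:
(i) $H(X_m)X_m'$ is obtained from the first $2g{-}1$ Newton sums $s_i^{(m)}$ of $U_m$ via $r_i^{(m)}=\tfrac{1}{i}\,ds_i^{(m)}/dt$ and a Hankel matrix--vector product (Proposition~\ref{prop:HankelproductHX});
(ii) the integral is componentwise;
(iii) the correction $X_n-X_m$ is turned into $U_n-U_m$ by a Kaltofen-style interpolation trick (Propositions~\ref{prop:interpolatingpolynomial}--\ref{prop:Un}), never inverting $H(X_m)$ as a matrix.
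Your Step~2, which rewrites the system as $\partial_tU\equiv -VR\pmod U$ and then proposes to apply the generic solver of Theorem~\ref{thm:mainthmbis} to this new ODE, is a genuinely different route.

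\medskip
\textbf{Gap 1 (complexity of the linearized step).} The reason the paper's Newton step reduces to a single integration is the identity $(H_\mathrm{f}(X_m)\,h)'=H_\mathrm{f}(X_m)\,h'+dH_\mathrm{f}(X_m)(h)\,X_m'$, which holds precisely because $H_\mathrm{f}$ has the very special shape $(f_{ij}(x_i))_{ij}$. After your change of variables to $U$, the linearized correction satisfies $\partial_t(\delta U)=A(t)\,\delta U+b(t)$ for some operator $A$ depending on $U_m,V_m$, and you have not exhibited an integrating factor for this system. ``Triangular in $t$'' lets you solve degree-by-degree, but that costs one $g\times g$ matrix--vector product per $t$-coefficient, i.e.\ $\softO(ng^2)$ over the whole recursion, not $\softO(ng)$. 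The paper sidesteps this entirely by never linearizing in $U$: it linearizes in $X$ (where the integrating factor is $H(X_m)$) and only then pushes the result forward to $U_n$.

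\medskip
\textbf{Gap 2 (source of the extra $\lfloor\log_p(2g{-}1)\rfloor$).} In the paper this extra loss has a single, pinpointed origin: line~10 of Algorithm~\ref{algo:Alternantsys}, the division $r_i^{(m)}:=\tfrac{1}{i}\,ds_i^{(m)}/dt$ for $i=1,\dots,2g{-}1$. That is the only place an integer up to $2g{-}1$ is inverted; all other modular inversions (of $\partial_zU_m$, of $2V_m$) involve units of $\OKtp$ by your own Step~1, and cost no $p$-adic digits. Your attribution of the loss to ``Euclidean and interpolation sub-steps'' and to ``valuations in the subresultant chain'' is off target: those steps are precision-neutral here. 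Once the Newton-sum division is isolated, the precision statement follows from Theorem~\ref{thm:mainthmbis} verbatim (whence the paper's one-line proof of Theorem~\ref{thm:mainthmbis2}), with the two losses $\lfloor\log_p n\rfloor$ and $\lfloor\log_p(2g{-}1)\rfloor$ simply adding because they occur in disjoint sub-steps.

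\medskip
In short: your descent and your identity $\partial_tU\equiv-VR\pmod U$ are correct and elegant, but to reach $\softO(ng)$ and the stated precision you should not re-linearize in $(U,V)$; instead, compute the residual $G-H(X_m)X_m'$ via Newton sums and a Hankel product, integrate, and recover $U_n-U_m$ by interpolation, exactly as in Section~\ref{subsec:Alternantdifferentialsystem}.
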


\subsection{Computing Cantor's division polynomials}
Cantor's division polynomials are defined as being the numerators and denominators of the components of a rational representation of the multiplication by an integer endomorphism. They were first introduced for elliptic curves and later were described for hyperelliptic curves by Cantor~\cite{Cantor}. They are crucial in point counting algorithms
on elliptic and hyperelliptic curves. Classical algorithms for computing a rational representation of the multiplication endomorphism are usually based on Cantor's paper~\cite{Cantor} and Cantor's algorithm for adding points on Jacobians (see for example~\cite{abela18}). Although, they exhibit acceptable running time in practice, their theoretical complexity has not been well studied yet and experiments show that they
become much slower when the degree or the genus get higher. 

Using the algorithm of Theorem~\ref{thm:main2}, we derive a fast algorithm to compute Cantor's division polynomials over finite fields of odd characteristic. Our final result is then the following. 

\begin{thmx}[See Theorem~\ref{thm:multiplicationbyellmap}]
 Let $p$ an odd prime number and $g>1$ an integer. Let $\ell$ be an integer greater than $g$ and coprime to $p$. Let $C:y^2 = f(x)$ be a hyperelliptic curve of genus $g$ defined over a finite field $k$ of characteristic $p$. There exists an algorithm that computes Cantor $\ell$-division polynomials of $C$, performing at most $\softO(\ell^2  g^2 )$ operations in $k$.
 \end{thmx}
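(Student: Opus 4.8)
The plan is to realise Cantor's $\ell$-division polynomials of $C$ as the numerators and denominators of a rational representation of the multiplication-by-$\ell$ endomorphism of $J(C)$, to compute this representation over the $p$-adics by invoking Theorem~\ref{thm:main2}, and then to reduce modulo $p$. The first step is to lift to characteristic zero. Let $K_0$ be the unramified extension of $\QQ$ with residue field $k$ and $\OKp = W(k)$ its ring of integers, and lift $f$ to an arbitrary $\widetilde f \in \OKp[x]$ of the same degree (still separable modulo $p$); put $\widetilde C\colon y^2 = \widetilde f(x)$, a genus-$g$ curve over $K_0$ reducing to $C$. No particular structure of $\widetilde C$ is needed here: the endomorphism $[\ell]$ lives on the Jacobian of any curve, and since $\gcd(\ell,p)=1$ it is \'etale, hence reduces modulo $p$ to the multiplication-by-$\ell$ map of $J(C)$; consequently a rational representation of $[\ell]$ on $J(\widetilde C)$ with coefficients in $\OKp$ reduces to one of $[\ell]$ on $J(C)$, and --- this is where the coprimality of $\ell$ and $p$ is essential --- the corresponding power-series solution remains integral, so that Theorem~\ref{thm:main2} applies with output precision $N=1$.

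Next I would write down the differential system. Following Cantor~\cite{Cantor}, one works with divisor classes $(P)-(\infty)$ for a point $P$ of $\widetilde C$: letting $P=P(t)=(x_0+t,\,y(t))$ move along $\widetilde C$ near a suitable base point $P_0=(x_0,y_0)\in\widetilde C(\OKp)$ with $\widetilde f(x_0)\in\OKp^\times$, so that $y(t)$ is a unit in $\OKp\llbracket t\rrbracket$, the reduced representative of $[\ell]\bigl((P(t))-(\infty)\bigr)$ has --- for $\ell>g$ and $P_0$ generic --- Mumford coordinates $U(t,z)=\prod_{i=1}^g(z-x_i(t))$ and $V(t,z)$ with $\deg_z U=g$ and $\deg_z V<g$. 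Pairing the relation expressing that the Abel--Jacobi image $\iota$ of this reduced divisor equals $\ell\cdot\iota(P(t))$ with the basis $z^{i-1}\,dz/(2w)$ of holomorphic differentials (where $w^2=\widetilde f(z)$) and differentiating in $t$ yields exactly the system of Theorem~\ref{thm:main2}, with $H(X(t))=\bigl(x_j(t)^{i-1}/y_j(t)\bigr)_{ij}$, $y_j(t)^2=\widetilde f(x_j(t))$, and $G_i(t)=\ell\,(x_0+t)^{i-1}/y(t)\in\OKp\llbracket t\rrbracket$. Its initial data $U_0=U(0,z)$ and $V_0=V(0,z)$ are the Mumford coordinates of $[\ell]\bigl((P_0)-(\infty)\bigr)$, obtained directly by $O(\log\ell)$ Cantor composition/reduction steps at negligible cost; \'etaleness of $[\ell]$ together with genericity of $P_0$ guarantees that $U_0\bmod p$ is separable and that $V_0$ does not vanish modulo $p$ at the roots of $U_0$, so all hypotheses of Theorem~\ref{thm:main2} are met.

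It remains to fix the required precision in $t$ and to invoke Theorem~\ref{thm:main2}. Each coefficient of $U(t,z)$ and of $V(t,z)$ is a rational function of $t$, being a coordinate of a rational representation of $[\ell]$, and Cantor's degree bounds~\cite{Cantor} show that their numerators and denominators have degree $O(g\ell^2)$; so with $n=c\,g\ell^2$ for a suitable constant $c$, the truncation to order $t^{n+1}$ determines all of them by rational reconstruction. Running the algorithm of Theorem~\ref{thm:main2} with this $n$ and $N=1$ produces $U(t,z)\bmod(p,t^{n+1})$ and the $y_j(t)\bmod(p,t^{n+1})$, whence $V(t,z)\bmod(p,t^{n+1})$ by interpolation, for $\softO(ng)=\softO(g^2\ell^2)$ operations in $\OKp$ at precision $O\bigl(p^{M+\lfloor\log_p(2g-1)\rfloor}\bigr)$ with $M=1+\lfloor\log_p n\rfloor$ (or $2+\lfloor\log_p n\rfloor$ when $p=3$), that is $M=O(\log(g\ell))$; representing elements of $\OKp$ at this precision as length-$O(\log(g\ell))$ Witt vectors over $k$, each ring operation costs $\softO(1)$ operations in $k$, so this is $\softO(g^2\ell^2)$ operations in $k$. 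Recovering the $2g$ rational coefficient functions by fast Pad\'e approximation costs a further $\softO(g\ell^2)$ operations in $k$ per function, i.e. $\softO(g^2\ell^2)$ in total, and reducing their (suitably normalised, hence integral) numerators and denominators modulo $p$ gives Cantor's $\ell$-division polynomials of $C$ within the claimed bound.

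The main obstacle is the reduction carried out in the first two paragraphs: one must verify carefully that Theorem~\ref{thm:main2} is being fed admissible data --- in particular that the power-series solution is integral, which relies on $\gcd(\ell,p)=1$, and that a base point $P_0$ and local parameter can be chosen so that $U_0\bmod p$ is separable, $V_0$ is non-vanishing modulo $p$ at its roots, and $[\ell]\bigl((P_0)-(\infty)\bigr)$ genuinely has weight $g$ (this last point is where the hypothesis $\ell>g$ enters) --- together with pinning down the $O(g\ell^2)$ degree bound for Cantor's polynomials, which is what fixes $n$ and hence the overall complexity.
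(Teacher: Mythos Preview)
Your approach is essentially the paper's own: lift the curve arbitrarily to an unramified extension $K_0/\QQ$, set up the alternant differential system for $[\ell]$ with $G_i(t)=\ell\,u(t)^{i-1}/v(t)$ and initial Mumford data $(U_0,V_0)$ computed by Cantor arithmetic on $J(\widetilde C)$, invoke Theorem~\ref{thm:main2} at $t$-adic precision $n=O(g\ell^2)$ and $p$-adic output precision $N=1$, and reconstruct the division polynomials by Pad\'e approximation.

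Two points deserve comment. First, the $O(g\ell^2)$ degree bound that fixes $n$ is not supplied by~\cite{Cantor}; the paper establishes it here as Proposition~\ref{prop:degreeboundmult}, by recognising $[\ell]$ as an $(\ell^2,\ldots,\ell^2)$-isogeny and applying the intersection-theoretic bound of Proposition~\ref{prop:degreegl}. You flag this as something to ``pin down'' in your last paragraph, but earlier you cite it as if it were in Cantor --- it is in fact one of the paper's contributions. Second, Theorem~\ref{thm:main2} outputs only $U(t,z)$, not the individual $y_j(t)$ (which live over $K$, not $K_0$), so your ``interpolation'' step for $V$ is not quite right as stated; the paper instead uses the already-reconstructed $d_i$ to extend the $t$-adic precision of $U$ cheaply to $2\deg(e_0)$, and then computes $V$ directly over $K_0$ as the square root $V^2\equiv f\pmod{U}$ via a Newton iteration, before a second round of Pad\'e approximation yields the $e_i$. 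Your variant can be made to work (Algorithm~\ref{algo:Alternantsys} does return $W_m$, from which $V_m=f_2 W_m\bmod U_m\in\OKtzp$ is immediate), but the paper's two-stage reconstruction is cleaner since the $e_i$ have larger degree than the $d_i$.
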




\section{Solving a system of $p$-adic differential equations: the general case}
\label{sec:GeneralCase}

In this section, we give a proof of Theorem~\ref{thm:main1} by solving the nonlinear system of differential equations \eqref{eq:nonlinearsystem} in an extension of $\mathbb{Q}_p$ for all prime numbers $p$. We use the computational model introduced in Section~\ref{subsec:comoutationalmodel} in our algorithm exposed in Section~\ref{subsec:algorithm} and the proof of its correctness is presented in Section~\ref{subsec:precisionanalysis}.\\
Throughout this section the letter $p$ refers to a fixed prime number and $K$ corresponds to a fixed finite extension of $\mathbb{Q}_p$. We denote by $\OK$ the ring of integers of $K$, $\pi \in \OK$ a fixed uniformizer and $e$ the ramification index of the extension $K/ \mathbb{Q}_p$.

\subsection{The algorithm}
\label{subsec:algorithm}
Let $g$ be a positive integer, $\Kt$ be the ring of formal series over $K$ in $t$. We denote by ${M}_g\! \left(k\right)$ the ring of square matrices of size $g$ over a field $k$. Let $\mathrm{f} = \big ( f_{ij} \big ) _{  i,j } \! \!  \in {M}_g \! \left ( \Kt \right)$ and $H_\mathrm{f}$ be the map defined by
\begin{equation*}
\begin{array}{rcl}
 \big ( t \Kt \big )^g  & \overset{H_\mathrm{f}} {\longrightarrow} & {M}_g \!  \left( \Kt \right)\, \\
\big ( x_1(t) , \ldots , x_g (t) \big ) & \longmapsto & \Big ( f_{ij} \big ( x_i(t)  \big ) \Big ) _{ij}\,.
\end{array}
\end{equation*}
Given $ \mathrm{f}  \in {M}_g \! \left( \Kt \right)$ and $ G = ( G_1, \ldots , G_g) \in  \Kt{} ^g$, we consider the following differential equation in $ X = (x_1,\ldots , x_g)$,
\begin{equation}
\label{eq:nonlinearequation}
H_\mathrm{f} \circ X \cdot X' = G.
\end{equation}
We will always look for solutions of \eqref{eq:nonlinearequation} in $\big (
t\Kt \big ) ^g$ in order to ensure that $H_\mathrm{f} \circ X$ is well
defined. We further assume that $H_\mathrm{f}(0)$ is invertible in ${M}_g \! \left( K \right)$.

The next proposition guarantees the existence and the uniqueness of a solution of the differential equation \eqref{eq:nonlinearequation}.

\begin{prop}
\label{prop:solutionexistence}
Assuming that $H_\mathrm{f}(0)$ is invertible in ${M}_g \!  \left( K \right)$, the system of differential equations \eqref{eq:nonlinearequation} admits a unique solution in $  \Kt{} ^g$.
\end{prop}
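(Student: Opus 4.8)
The plan is to prove existence and uniqueness by rewriting the differential equation as a fixed-point problem for the coefficients of $X$ and solving it recursively degree by degree. First I would observe that since we are looking for a solution in $(t\Kt)^g$, the composition $H_\mathrm{f} \circ X$ is well-defined, and its constant term is $H_\mathrm{f}(0)$, which is invertible in $M_g(K)$ by hypothesis. Since invertibility is an open condition (equivalently, the determinant is a power series with nonzero constant term, hence a unit in $\Kt$), the matrix $H_\mathrm{f} \circ X$ is invertible in $M_g(\Kt)$ for \emph{any} choice of $X \in (t\Kt)^g$. Therefore Equation~\eqref{eq:nonlinearequation} is equivalent to
\begin{equation*}
X' = \big( H_\mathrm{f} \circ X \big)^{-1} \cdot G,
\end{equation*}
an explicit first-order system in solved form, together with the initial condition $X(0) = 0$.

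The key step is then a formal Picard-type / coefficient-extraction argument. Writing $X(t) = \sum_{k \geq 1} X_k t^k$ with $X_k \in K^g$, the right-hand side $\big( H_\mathrm{f} \circ X \big)^{-1} \cdot G$ is a power series whose coefficient of $t^k$ depends only on $X_1, \ldots, X_k$ (in fact only on $X_1, \dots, X_{k}$ through the composition and the matrix inverse, both of which are continuous for the $t$-adic topology and whose $t^k$-coefficient involves only the first $k$ coefficients of their arguments). Matching coefficients of $t^k$ in $X' = (H_\mathrm{f}\circ X)^{-1} G$ gives $(k+1) X_{k+1} = \Phi_k(X_1, \ldots, X_k)$ for an explicit polynomial map $\Phi_k$ with coefficients in $K$; here $\Phi_0$ is just the constant term $H_\mathrm{f}(0)^{-1} G(0)$. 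Since we are in characteristic zero, $k+1$ is invertible, so this determines $X_{k+1}$ uniquely from $X_1, \dots, X_k$, and with $X_0 = 0$ imposed we recover all coefficients by induction. This simultaneously proves existence (the recursion never obstructs) and uniqueness (each step is forced).

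I expect the only genuine subtlety — rather than a true obstacle — to be the careful justification that the $t^k$-coefficient of $(H_\mathrm{f}\circ X)^{-1}\cdot G$ is a polynomial in $X_1, \ldots, X_k$ only, so that the recursion is well-founded; this rests on the fact that $H_\mathrm{f} \circ X \equiv H_\mathrm{f}(0) \pmod t$ together with the Neumann-series expansion $\big( H_\mathrm{f}(0) + N \big)^{-1} = H_\mathrm{f}(0)^{-1}\sum_{m\ge 0}(-N H_\mathrm{f}(0)^{-1})^m$ for $N \in t M_g(\Kt)$, which converges $t$-adically and is truncatable at each order. Everything else is a routine induction. (One may alternatively phrase the whole argument via the classical Cauchy–Lipschitz theorem for formal power series, but since $p$ is arbitrary here and no $p$-adic convergence is claimed at this stage, only the formal — characteristic zero — statement over $K$ is needed, and the direct coefficient recursion is the cleanest route.)
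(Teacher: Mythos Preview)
Your proposal is correct and follows essentially the same approach as the paper: rewrite the system in solved form $X' = (H_\mathrm{f}\circ X)^{-1}G$ using the invertibility of $H_\mathrm{f}(0)$, then determine the coefficients $X_k$ recursively. The only cosmetic difference is that the paper phrases the recursion via successive differentiation and evaluation at $0$ rather than direct coefficient extraction, but these are equivalent in characteristic zero.
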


\begin{proof}
We are looking for a vector $X(t) = \sum  \limits _{n=1}^\infty {X_n t^n}$ that satisfies Equation~\eqref{eq:nonlinearequation}.
Since $X(0)= 0$ and $H_\mathrm{f} (0)$ is invertible in $ \Kt {} ^g$, then $H
_\mathrm{f} \big (X (t)\big )$ is invertible in $ M_{g} \!  \left( \Kt
\right)$. So Equation~\eqref{eq:nonlinearequation} can be written as
\begin{equation}
\label{eq:existanceproof}
 X '(t) = \big ( H _\mathrm{f}( X (t)) \big )^{-1} \cdot G(t).
\end{equation}
Equation~\eqref{eq:existanceproof} applied to $0$, gives the non-zero vector $X_1$. Taking the $n$-derivative of Equation~\eqref{eq:existanceproof} with respect to $t$ and applying the result to $0$, we observe that the coefficient $X_n$ only appears on the hand left side of the result, so each component of $X_n$ is a polynomial in the components of the $X_i$'s for $i<n$ with coefficients in $K$. Therefore, the coefficients $X_n$ exist and are all uniquely determined.
\end{proof}

We construct the solution of Equation~\eqref{eq:nonlinearequation} using a Newton scheme. We recall that for $Y = (y_1, \ldots ,y_g) \in \Kt {}^g$, the differential of $H_\mathrm{f}$ with respect to $Y$ is the function
\begin{equation}
\label{eq:chainformula}
\begin{array}{rcl}
dH_\mathrm{f} (Y) \, : \,  \Kt{} ^g & \longrightarrow & {M}_g \! \left( \Kt \right)\, \\
h & \longmapsto & dH_\mathrm{f}(Y)(h) = \left( { {f'_{ij}}} \left( y_i \right) \cdot  h_i \right)_{1\leq i,j \leq g}\,.
\end{array}
\end{equation}

We fix $m \in \N$ and we consider an approximation $X_m$ of $X$ modulo $t^m$. We want to find a vector $h \in \left( t^m \, \Kt \right)^g $, such that $X_m +h$ is a better approximation of $X$. We compute
\begin{equation*}
  H_\mathrm{f} \left( X_m + h \right)  =   H_\mathrm{f} \left( X_m \right)+ dH_\mathrm{f}(X_m) (h) \pmod {t^{2m}}\,.
\end{equation*}
Therefore we obtain the following relation
\begin{multline*}
 H_\mathrm{f} \left( X_m + h \right) \cdot \left( X_m + h \right) ' - G  = \\
H_\mathrm{f} \left( X_m \right) \cdot X_m' + H_\mathrm{f} \left( X_m \right) \cdot h' + dH_\mathrm{f}(X_m) (h) \cdot X_m' - G \pmod {t^{2m-1}}\,.
\end{multline*}
So we look for $h$ such that
\begin{equation}
\label{eq:newtoneq}
 H_\mathrm{f} \left( X_m  \right) \cdot h'  + dH_\mathrm{f}(X_m) (h) \cdot X_m'  =  -H_\mathrm{f} \left( X_m \right)\cdot X_m' +G \pmod {t^{2m-1}}\,.
\end{equation}
It is easy to see that the left hand side of Equation~\eqref{eq:newtoneq} is equal to $\left(H_\mathrm{f} \left( X_m \right) \cdot h \right)^{'}$, therefore integrating each component of Equation~\eqref{eq:newtoneq} and multiplying the result by $ \left(H_\mathrm{f} \left( X_m \right) \right)^{-1} $ gives the following expression for $h$
\begin{equation}
\label{eq:newtoniteration}
 h= \left(H_\mathrm{f} \left( X_m\right) \right) ^{-1} \,  \int {\left( G - H_\mathrm{f} \left( X_m \right)\cdot X_m' \right)\, dt} \pmod {t^{2m}},
\end{equation}
where $\int Y dt$, for $Y \in  \Kt {} ^g$, denotes the unique vector $I \in  \Kt {}^g$ such that $I' = Y$ and $I(0)=0$.\\
This formula defines a Newton operator for computing an approximation of the solution of Equation~\eqref{eq:nonlinearequation}. Reversing the above calculations leads to the following proposition.
\begin{prop}
\label{prop:newtoncase1}
We assume that $H_\mathrm{f}(0)$ is invertible in ${M}_g \! \left( K \right)$. Let $m\geq 0$ be an integer, $n=2m+1$ and $X_m \in  \Kt {}^g$ a solution of Equation~\eqref{eq:nonlinearequation} mod $t^{m+1}$. Then,
\begin{equation*}
X_n = X_m +  \left( H_\mathrm{f} \left( X_m \right) \right)^{-1} \int {\left(G - H_\mathrm{f} \left( X_m \right) \cdot X_m'\right)\, dt}
\end{equation*}
is a solution of Equation~\eqref{eq:nonlinearequation} mod $t^{n+1}$.
\end{prop}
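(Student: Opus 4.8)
The plan is to check directly that $X_n$ solves~\eqref{eq:nonlinearequation} to the stated order, by reversing the computation preceding the statement. Let $X\in(t\Kt)^g$ be the unique solution furnished by Proposition~\ref{prop:solutionexistence} and, for $Y\in(t\Kt)^g$, abbreviate $\Psi(Y):=H_\mathrm{f}(Y)\cdot Y'-G$, so that $\Psi(X)=0$. I will use the equivalence, which follows from the coefficientwise recursion in the proof of Proposition~\ref{prop:solutionexistence}: for $Y\in(t\Kt)^g$, ``$Y$ solves~\eqref{eq:nonlinearequation} mod $t^{k}$'' (that is, $Y\equiv X\pmod{t^{k}}$) if and only if $\Psi(Y)\in(t^{k-1}\Kt)^g$. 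Set $h:=X_n-X_m=(H_\mathrm{f}(X_m))^{-1}\int\!\big(G-H_\mathrm{f}(X_m)\cdot X_m'\big)\,dt=-(H_\mathrm{f}(X_m))^{-1}\int\!\Psi(X_m)\,dt$. As $X_m(0)=0$, the matrix $H_\mathrm{f}(X_m)$ has invertible constant term $H_\mathrm{f}(0)$, hence is a unit of $M_g(\Kt)$. Since $X_m\equiv X\pmod{t^{m+1}}$ and substitution into a power series preserves congruences, all entries of $H_\mathrm{f}(X_m)-H_\mathrm{f}(X)$ lie in $t^{m+1}\Kt$ while $X_m'-X'\in(t^{m}\Kt)^g$, so $\Psi(X_m)=\big(H_\mathrm{f}(X_m)-H_\mathrm{f}(X)\big)\cdot X_m'+H_\mathrm{f}(X)\cdot\big(X_m'-X'\big)\in(t^{m}\Kt)^g$; integrating and applying the unit $(H_\mathrm{f}(X_m))^{-1}$ gives $h\in(t^{m+1}\Kt)^g$, hence also $X_n\in(t\Kt)^g$ and $h'\in(t^{m}\Kt)^g$.

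Now I would compute $\Psi(X_n)$. From the definition of $h$, $\big(H_\mathrm{f}(X_m)\cdot h\big)'=G-H_\mathrm{f}(X_m)\cdot X_m'=-\Psi(X_m)$ exactly; combined with the Leibniz-type identity used in deriving~\eqref{eq:newtoniteration}, namely $dH_\mathrm{f}(X_m)(h)\cdot X_m'+H_\mathrm{f}(X_m)\cdot h'=\big(H_\mathrm{f}(X_m)\cdot h\big)'$, this gives $dH_\mathrm{f}(X_m)(h)\cdot X_m'+H_\mathrm{f}(X_m)\cdot h'=-\Psi(X_m)$. Since $h\in(t^{m+1}\Kt)^g$ and the $f_{ij}$ are power series, Taylor's formula yields $H_\mathrm{f}(X_m+h)=H_\mathrm{f}(X_m)+dH_\mathrm{f}(X_m)(h)+R$ with all entries of $R$ in $t^{2m+2}\Kt$; expanding $\Psi(X_n)=H_\mathrm{f}(X_m+h)\cdot(X_m+h)'-G$ and grouping terms,
\begin{multline*}
\Psi(X_n)=\big(H_\mathrm{f}(X_m)\cdot X_m'-G\big)+\big(dH_\mathrm{f}(X_m)(h)\cdot X_m'+H_\mathrm{f}(X_m)\cdot h'\big)\\
+\,dH_\mathrm{f}(X_m)(h)\cdot h'+R\cdot(X_m'+h').
\end{multline*}
By the two identities the first two grouped terms are $\Psi(X_m)$ and $-\Psi(X_m)$, so they cancel and $\Psi(X_n)=dH_\mathrm{f}(X_m)(h)\cdot h'+R\cdot(X_m'+h')$. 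As the entries of $dH_\mathrm{f}(X_m)(h)$ are divisible by the components of $h$, they lie in $t^{m+1}\Kt$; with $h'\in(t^{m}\Kt)^g$ the first summand lies in $(t^{2m+1}\Kt)^g$, and the second in $(t^{2m+2}\Kt)^g$ because $X_m'+h'\in\Kt^g$. Hence $\Psi(X_n)\in(t^{2m+1}\Kt)^g=(t^{n}\Kt)^g$, and the equivalence above gives $X_n\equiv X\pmod{t^{n+1}}$, i.e.\ $X_n$ solves~\eqref{eq:nonlinearequation} mod $t^{n+1}$.

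Apart from one step the argument is routine bookkeeping of $t$-adic valuations, and it is the precise order of the term $dH_\mathrm{f}(X_m)(h)\cdot h'$, together with the bound on $R$, that yields the quadratic gain from $t^{m+1}$ to $t^{n+1}=t^{2m+2}$. The step requiring care is the Leibniz identity $dH_\mathrm{f}(X_m)(h)\cdot X_m'+H_\mathrm{f}(X_m)\cdot h'=\big(H_\mathrm{f}(X_m)\cdot h\big)'$: it is the product rule for $t\mapsto H_\mathrm{f}(X_m(t))\cdot h(t)$ together with the chain rule, but it relies on the coordinate substituted into each entry of $dH_\mathrm{f}$ being the same one substituted into that entry of $H_\mathrm{f}$, so the indexing conventions for $H_\mathrm{f}$ and for its differential must be taken consistently.
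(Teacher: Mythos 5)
Your proof is correct and follows the same route the paper intends: it is precisely the ``reversal of the above calculations'' that the paper invokes without writing out, and your valuation bookkeeping is done carefully (indeed more sharply than the paper's forward derivation, which works with $h\in(t^m\Kt)^g$ only modulo $t^{2m-1}$, whereas you correctly land on $\Psi(X_n)\in(t^{n}\Kt)^g$ and hence agreement modulo $t^{n+1}$). Your closing caveat about indexing is well taken: the identity $dH_\mathrm{f}(X_m)(h)\cdot X_m'+H_\mathrm{f}(X_m)\cdot h'=\big(H_\mathrm{f}(X_m)\cdot h\big)'$ holds when the coordinate substituted into the $(i,j)$ entry is paired with the same index that the matrix--vector product contracts against (as in the alternant matrix of the application), and that is the convention one must adopt throughout.
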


It is straightforward to turn Proposition~\ref{prop:newtoncase1} into an algorithm that solves the non-linear system~\eqref{eq:nonlinearequation}. We make a small optimization by integrating the computation of $H_\mathrm{f}(X)^{-1}$ in the Newton scheme.

\begin{figure}[h!]
  \begin{center}
    \parbox{0.85\linewidth}{%
      \begin{footnotesize}\SetAlFnt{\small\sf}%
        \begin{algorithm}[H]%
          \caption{Differential Equation Solver} %
          \label{algo:DiffSolve}%
          \SetKwInOut{Input}{Input} %
          \SetKwInOut{Output}{Output} %
          \SetKwProg{DiffSolve}{\tt DiffSolve}{}{}%
          \DiffSolve{$(G,\mathrm{f},n)$}{
            \Input{$G, \mathrm{f} \mod {t^n}$ such that $H_\mathrm{f}(0)$ is invertible in $M_g \! \left( K \right)$. }
            \Output{The solution $X$ of Equation~\eqref{eq:nonlinearequation}$\mod {t^{n+1}}$, $H_\mathrm{f}\left(X \right) \mod t^{\lceil n/2 \rceil}$} \BlankLine %
            \If{$n =0$}
            {%
            \KwRet{$ 0 \mod t, \, H_\mathrm{f} (0)^{-1} \mod t$}
            }%
            {}
            $ m := \lceil \frac{n-1}{2} \rceil$\;%

            $X_m,\, H_m:= $ \texttt{DiffSolve}{$(G,\mathrm{f},m)$}\;%

            $H_n := 2H_m - H_m \cdot H_\mathrm{f}(X) \cdot H_m \mod t^{m+1}$

            \KwRet{$X_m +  H_n \mathlarger{\int} {\left(G - H_\mathrm{f} \left( X_m \right)\cdot X_m'\right)\, dt}\; \mod {t^{n+1}}$}
          }
        \end{algorithm}
      \end{footnotesize}
    }
  \end{center}
\end{figure}

According to Proposition~\ref{prop:newtoncase1},
Algorithm~\ref{algo:DiffSolve} runs correctly when its entries are given with an infinite $p$-adic precision; however it could stop working if we use the fixed point arithmetic model.  The next theorem
guarantees its correctness in this type of model.

\begin{thm}
\label{thm:mainthmbis}
Let $n,g \in \mathbb{N}$, $N\in \frac{1}{e}\mathbb{Z}^*, G \in \OKt ^g$ and
$\mathrm{f} \in M_g\! \left( \OKt\right)$. We assume that $H_\mathrm{f}(0)$ is
invertible in $M_g \! \left( \OK \right)$ and that the components of the
solution of Equation~\eqref{eq:nonlinearequation} have coefficients in $\OK$. Then, the procedure $\textrm{\tt DiffSolve}$ runs
with fixed point arithmetic at precision $O(p^M)$, with
$M = \max ( N , 3) + \lfloor \log _ p(n) \rfloor$ if $p=2$,
$M = \max ( N , 2) + \lfloor \log _ p(n) \rfloor$ if $p=3$ and
$M = N + \lfloor \log _ p(n) \rfloor$ otherwise, all the computations are done
in $\OK$ and the result is correct at precision $O(p^N)$.
\end{thm}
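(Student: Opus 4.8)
The plan is to track, through one Newton step, two quantities: (i) the $t$-adic accuracy of the current approximation $X_m$, which doubles by Proposition~\ref{prop:newtoncase1}, and (ii) the $p$-adic precision to which the intermediate and returned quantities are reliably known in the fixed-point model. Accuracy~(i) is already settled, so the real content is (ii): I must show that at each recursive level the only divisions by $p$ come from the integration operator $\int \cdot \, dt$, quantify how many digits each integration costs, and sum this over the $O(\log n)$ levels of recursion. The induction hypothesis will be a statement of the shape: the call $\texttt{DiffSolve}(G,\mathrm{f},n)$, run at working precision $O(p^M)$ with $M$ as in the statement, returns $X_n$ correct modulo $(p^N,t^{n+1})$ and $H_\mathrm{f}(X_n)^{-1}$ correct to precision $O(p^{N'})$ for a suitable $N'$, provided the exact solution has coefficients in $\OK$.

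First I would isolate the precision behaviour of the basic operations. Addition, subtraction, multiplication and composition $H_\mathrm{f}(X)=(f_{ij}(x_i))$ do not decrease precision in the model of Section~\ref{subsec:comoutationalmodel} because the solution and all $f_{ij}$ have integral coefficients, so $H_\mathrm{f}(X_m)\in M_g(\OKt)$ and no division by $p$ is hidden there. The inverse $H_\mathrm{f}(X_m)^{-1}$ is handled by Proposition~\ref{prop:Jordan} (Gauss--Jordan): since $H_\mathrm{f}(0)\in\mathrm{GL}_g(\OK)$ and $X_m\in t\OKt^g$, the matrix $H_\mathrm{f}(X_m)$ lies in $\mathrm{GL}_g(\OKt)$, so its inverse is computed with no loss of precision; the Newton-type update $H_n:=2H_m-H_mH_\mathrm{f}(X)H_m$ for the inverse likewise stays integral. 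The one genuine source of loss is $\int Y\,dt$: integrating a series known mod $(p^M,t^{n+1})$ divides the coefficient of $t^{j-1}$ by $j$, and $\upsilon_p(j)\le\lfloor\log_p n\rfloor$ for $j\le n+1$, so a single integration costs at most $\lfloor\log_p n\rfloor$ digits. The subtlety I would flag as the crux: one must check that this division by $j$ is legitimate in the model, i.e.\ that $\upsilon_p$ of the dividend is at least $\upsilon_p(j)$, which holds because the \emph{exact} primitive has integral coefficients by hypothesis, and argue that the returned representative, though only one valid choice, differs from the true value by $O(p^{M-\lfloor\log_p n\rfloor})$.

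Next I would run the induction. At the top level, a single integration is applied to $G-H_\mathrm{f}(X_m)\cdot X_m'$, costing $\lfloor\log_p n\rfloor$ digits; but the recursive call already suffered losses. The key observation is that the recursion depth is $O(\log n)$ and the parameter halves each time, so the total loss telescopes: working at level with parameter $m$ one loses $\lfloor\log_p m\rfloor\le\lfloor\log_p n\rfloor$, and a careful bookkeeping (rather than naively summing $\log_p$ over all levels, which would give an extra $\log\log$ factor) shows the losses do not accumulate additively across levels because each Newton step only needs its input to half-precision in $t$ and the precision demand propagates correctly — this is exactly the phenomenon exploited in \cite{lava16}. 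I would make this precise by strengthening the induction hypothesis to demand slightly more precision at deeper levels (an extra $\lfloor\log_p(\text{current }n)\rfloor$), so that each level has enough margin to absorb its own single integration and hand back the promised $O(p^N)$. The special cases $p=2$ (need $M\ge N+3$) and $p=3$ (need $M\ge N+2$) arise because the inverse computation via Gauss--Jordan, or the geometric-series update of $H_\mathrm{f}(X)^{-1}$, can lose a bounded number of digits when $p$ is small — I would trace the small-prime constant through Proposition~\ref{prop:Jordan} and the identity $H_n=2H_m-H_mH_\mathrm{f}(X)H_m$ to pin down the $3$ and the $2$.

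The main obstacle I expect is \textbf{the precision bookkeeping across the $O(\log n)$ recursive levels}: showing that the per-level integration loss of $\lfloor\log_p n\rfloor$ digits does not stack into an $O(\log n\cdot\log_p n)$ loss. Resolving this requires the right invariant — demanding at the recursive call of size $m$ only the precision that a size-$m$ solution genuinely needs, and verifying that one integration plus the integral/loss-free inverse update brings a size-$m$ result up to a size-$n$ result without extra cost. Everything else — integrality of $H_\mathrm{f}(X)$, loss-free inversion, correctness of the Newton formula — is either immediate from the hypotheses or already recorded in Propositions~\ref{prop:Jordan} and \ref{prop:newtoncase1}.
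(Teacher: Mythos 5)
Your forward error-propagation strategy does not match the paper's argument, and the two places where you defer the real work are precisely where it breaks. First, the accumulation problem you flag as the main obstacle is not resolved by your proposed fix: demanding an extra $\lfloor\log_p(\text{current }n)\rfloor$ digits at each of the $O(\log n)$ recursion levels still sums to roughly $(\log_2 n)(\log_p n)$ lost digits, not $\lfloor\log_p n\rfloor$. The paper avoids accumulation by a \emph{backward} error analysis: it proves by induction that the computed iterate $X_n$ satisfies $H_\mathrm{f}(X_n)\cdot X_n' = G \bmod (t^n,p^M)$ \emph{at full working precision} $M$ at every level --- the point being that in the fixed-point model the computed antiderivative $I$ of $e_m$ satisfies $I'=e_m \bmod p^M$ exactly, so no loss ever enters the residual. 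The computed output is thus the exact solution $X_n(G_n)$ for a perturbed right-hand side with $\Vert G-G_n\Vert_{F_n}\le p^{-M}$, and the $\lfloor\log_p n\rfloor$ is paid exactly once at the very end, in passing to the norm $\Vert\cdot\Vert_{E_n}$ (which is defined through $\int$) and invoking the stability of the solution map $G\mapsto X_n(G)$ (Lemma~\ref{lem:isometry} and Proposition~\ref{prop:precisionlemma}). Without this change of viewpoint, or an equally sharp invariant, your induction does not close.

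Second, your diagnosis of the $p=2,3$ corrections is wrong. Gauss--Jordan on $\mathrm{GL}_g(\OK)$ loses nothing for any $p$ (Proposition~\ref{prop:Jordan}), and the update $H_n=2H_m-H_mH_\mathrm{f}(X)H_m$ is a ring operation in $\OKt$ with no divisions; neither can produce a prime-dependent constant. The $\max(N,3)$ and $\max(N,2)$ come from the \emph{nonlinearity of the solution map}: Proposition~\ref{prop:precisionlemma} (via Lemma~\ref{lem:Lambda} and the differential-precision results of \cite{carova14,carova15}) only identifies $X_n(G+B_{E_n}(\delta))$ with $X_n(G)+B_{F_n}(\delta)$ when $\delta<p^{-2/(p-1)}$, a condition automatic for $p\ge 5$ once $\delta\le p^{-1}$, but forcing $\delta\le 3^{-2}$ when $p=3$ and $\delta\le 2^{-3}$ when $p=2$. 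Your proposal contains no mechanism for controlling the higher-order terms of the (locally analytic, nonlinear) dependence of the solution on the data, which is the actual source of these constants.
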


We give a proof of Theorem~\ref{thm:mainthmbis} at the end of Section~\ref{subsec:precisionanalysis}. Right now, we concentrate on the complexity of Algorithm~\ref{algo:DiffSolve}. Recall that $\cMM(g,n)$ is the number of arithmetical operations required to compute the product of two $g \times g$ matrices containing polynomials of degree $n$ and $\cM(n) := \cMM (1,n)$, therefore $\cM(n)$ is the number of arithmetical operations required to compute the product of two polynomials of degree $n$. According to \cite[Chapter~8]{Bostan17}, the two functions $\cM(.)$ and $\cMM(g,.)$ (in the worst case) are related by the following formula
\begin{equation}
\label{eq:relationcomplexity}
 \cMM(g , n) = O \left( g^{\omega}\cM(n) \right)
\end{equation}
where $\omega \in [2,3[$ is the exponent of matrix multiplication.
Furthermore, we recall that $\cC _{H} (n)$ denotes the algebraic complexity for
computing $H \circ X \mod t^n$ for an analytic map $H \! : \Kt{}^{g} \rightarrow {M}_g\!\left( \Kt \right)$ of the form $H = H_\mathrm{f}$ where $\mathrm{f} \in {M}_g\!\left( \Kt \right)$. We assume that $\cM (n)$ and $\cC _ H(n)$ satisfy the superadditivity hypothesis
\begin{equation}
\label{eq:superadditivity}
\begin{array}{cccc}
\cM (n_1 + n_2 ) & \geq &  \cM (n_1) + \cM (n_2),\\
 \cC_H (n_1 + n_2 ) & \geq &  \cC _H  (n_1) + \cC_H(n_2),
\end{array}
\end{equation}
 for all $n_1 , n_2 \in \mathbb{N}.$\\
Using Equation~\eqref{eq:relationcomplexity} we deduce the following relation
\begin{equation}
\label{eq:superadditivityMM}
\begin{array}{ccc}
O(\cMM (g,n_1 + n_2 ) )& \geq &  \cMM (g,n_1) + \cMM (g,n_2).
\end{array}
\end{equation}

\begin{prop}
\label{prop:complexity}
Algorithm~\ref{algo:DiffSolve} performs $O \left( \cMM( g,n ) + \cC _{H_\mathrm{f}}(n) \right)$ operations in $K$.
\end{prop}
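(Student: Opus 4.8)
The plan is to analyze the recursion in Algorithm~\ref{algo:DiffSolve} by setting up a recurrence for its cost and solving it using the superadditivity hypotheses~\eqref{eq:superadditivity} and~\eqref{eq:superadditivityMM}. Write $C(n)$ for the number of operations in $K$ performed by \texttt{DiffSolve}$(G,\mathrm f,n)$. The algorithm makes one recursive call on parameter $m=\lceil (n-1)/2\rceil \leq n/2$, and then performs a bounded number of operations on objects that are polynomials/matrices of degree $O(n)$: one composition $H_\mathrm f(X_m)\bmod t^{n+1}$ costing $\cC_{H_\mathrm f}(n)$; two or three $g\times g$ matrix multiplications of polynomial entries of degree $O(n)$, costing $O(\cMM(g,n))$ each (to update $H_n$ via $2H_m - H_m\cdot H_\mathrm f(X)\cdot H_m$ and to form the final product $H_n\cdot\int(\cdots)$); one matrix–vector product, also $O(\cMM(g,n))$; a formal integration and a derivative of a degree-$O(n)$ vector, costing $O(gn)$; and $O(g^2 n)$ additions. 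All the lower-order terms are absorbed into $O(\cMM(g,n)+\cC_{H_\mathrm f}(n))$ since $\cMM(g,n)\geq \cM(n)\geq n$ and $\cMM(g,n)\geq g^2 n$ (a matrix product already reads $g^2$ polynomials). Hence one obtains a recurrence of the shape
\begin{equation*}
C(n) \;\leq\; C(\lceil (n-1)/2\rceil) \;+\; K_0\bigl(\cMM(g,n)+\cC_{H_\mathrm f}(n)\bigr)
\end{equation*}
for some absolute constant $K_0$, with base case $C(0)=O(g^3)$ from computing $H_\mathrm f(0)^{-1}$ by Proposition~\ref{prop:Jordan}.

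Next I would unroll this recurrence. Writing $n_0=n$ and $n_{k+1}=\lceil(n_k-1)/2\rceil$, the depth of the recursion is $O(\log n)$ and
\begin{equation*}
C(n) \;\leq\; C(0) + K_0\sum_{k\geq 0}\bigl(\cMM(g,n_k)+\cC_{H_\mathrm f}(n_k)\bigr).
\end{equation*}
Because $n_k \leq n/2^{\,k}$, the superadditivity of $\cM$ (whence of $\cMM(g,\cdot)$ via~\eqref{eq:superadditivityMM}) and of $\cC_{H_\mathrm f}$ forces $\sum_{k\geq 1}\cMM(g,n_k)\leq c\cdot\cMM(g,n)$ and likewise for $\cC_{H_\mathrm f}$: indeed $\cMM(g,n_1)+\cMM(g,n_2)\leq O(\cMM(g,n_1+n_2))$ applied repeatedly telescopes a geometrically decaying sum of $\cMM$-values into a constant multiple of the top term. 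The base contribution $C(0)=O(g^3)$ is itself $O(\cMM(g,n))$. Combining these estimates yields $C(n)=O(\cMM(g,n)+\cC_{H_\mathrm f}(n))$, which is the claim.

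The only genuinely delicate point is the passage from the termwise bound to the summed bound: the hypotheses~\eqref{eq:superadditivity} and~\eqref{eq:superadditivityMM} are stated as superadditivity, i.e. $\cM(n_1+n_2)\geq\cM(n_1)+\cM(n_2)$, and one must check that this does control a geometric sum $\sum_k \cM(n/2^k)$ by $O(\cM(n))$ — which it does, since superadditivity implies $\cM(n/2^k)\leq 2^{-k}\cdot 2\cM(n)$ type bounds after summing consecutive dyadic blocks, or more directly $\sum_{k\geq 1}\cM(\lceil n/2^k\rceil)\leq \cM(n)$ by pairing up the pieces of a partition of an interval of length $\leq n$. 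For $\cMM(g,\cdot)$ one uses~\eqref{eq:superadditivityMM}, whose extra $O(\cdot)$ is harmless since it only inflates the final constant. I would also remark that the per-level cost is dominated by the composition and the matrix arithmetic precisely because every other operation (integration, differentiation, scalar additions) is linear in $n$ and in the number $g^2$ of entries, hence $O(\cMM(g,n))$, so no hidden term escapes the stated bound. Everything else is a routine tally of the pseudocode.
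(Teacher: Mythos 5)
Your proof is correct and follows the same route as the paper: set up the recurrence $\cD(n)\leq\cD(\lceil(n-1)/2\rceil)+O(\cMM(g,n)+\cC_{H_\mathrm{f}}(n))$ and resolve it via the superadditivity hypotheses \eqref{eq:superadditivity} and \eqref{eq:superadditivityMM}. You simply spell out the telescoping of the dyadic sum and the per-level operation count, which the paper leaves implicit.
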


\begin{proof}
Let $\cD$ denote the algebraic complexity of Algorithm~\ref{algo:DiffSolve}, then we have the following relation
\begin{equation*}
\cD (n)  \leq  \cD \left( \left\lceil \dfrac{n-1}{2} \right\rceil \right) + O \left( \cMM ( g , n ) + \cC _{H_\mathrm{f}}(n) \right).
\end{equation*}
Noticing that $g$ does not change at each iteration and using Equations~\eqref{eq:superadditivity} and \eqref{eq:superadditivityMM}, we find $ \cD (n) = O \left( \cMM (  g, n ) + \cC _{H_\mathrm{f}}(n) \right)$ and the result is proved.
\end{proof}

\begin{remark}
\label{remark:specialcase}
If the map $H_\mathrm{f}$ includes random univariate
rational fractions of radicals of constant degrees, the algebraic complexity $\cC_{H_\mathrm{f}}(n)$ is equal to $O\left( g^2 \cM(n) \right)$. Standard algorithms allow us to take $\cM(n) \in \softO (n)$. Therefore, Algorithm~\ref{algo:DiffSolve} outputs the solution of Equation~\eqref{eq:nonlinearequation} mod $t^{n+1}$ for a cost of $\softO (g^\omega n)$ operations in $\OK$.
\end{remark}

\begin{cor}
When performed with fixed point arithmetic at precision $O(p^M)$, the bit complexity of Algorithm~\ref{algo:DiffSolve} is $O \left(  \left(\cMM (g, n ) + \cC _{H_\mathrm{f}}(n) \right) \cdot \cA (K;M) \right)$ where $\cA (K;M)$ denotes an upper bound on the bit complexity of the arithmetic operations in $\OK / \pi ^{eM} \OK$.
\end{cor}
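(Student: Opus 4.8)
The argument is a direct translation of the algebraic operation count into the fixed point arithmetic model, so the plan is simply to multiply the algebraic complexity established in Proposition~\ref{prop:complexity} by an upper bound on the bit cost of one arithmetic operation at precision $O(p^M)$.

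First I would note that, by Proposition~\ref{prop:complexity}, a run of Algorithm~\ref{algo:DiffSolve} performs $O(\cMM(g,n) + \cC_{H_\mathrm{f}}(n))$ arithmetic operations in $K$; concretely these consist of the single Gauss-Jordan inversion of $H_\mathrm{f}(0)$ in the base case (costing $O(g^3)$ operations in $K$ by Proposition~\ref{prop:Jordan}), the matrix additions and multiplications in the updates $H_n := 2H_m - H_m \cdot H_\mathrm{f}(X) \cdot H_m$ and in the products $H_\mathrm{f}(X_m)\cdot X_m'$, the evaluations of $H_\mathrm{f}\circ X_m$ accounting for $\cC_{H_\mathrm{f}}(n)$, and the coefficient-wise divisions by integers $\leq n$ hidden in the integrations $\int(\cdot)\,dt$. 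Next I would invoke Theorem~\ref{thm:mainthmbis}: under its hypotheses and at precision $O(p^M)$ with $M$ as in that statement, the algorithm terminates, no division ever raises an error, and every one of the operations listed above is carried out on elements of $\OK/\pi^{eM}\OK$. Hence each of them is, by the very definition of $\cA(K;M)$, performed in $O(\cA(K;M))$ bit operations, and multiplying the two bounds yields the bit complexity $O((\cMM(g,n) + \cC_{H_\mathrm{f}}(n))\cdot \cA(K;M))$.

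The only point that deserves a word of care is that $\cA(K;M)$ is required to bound the bit cost not only of additions, subtractions and multiplications in $\OK/\pi^{eM}\OK$ but also of the divisions occurring in the integration step and inside Gaussian elimination; this is the natural convention, since such a division amounts to an inversion in the residue ring followed by a multiplication, and it is what lets the $O(\log n)$ levels of the recursion collapse --- exactly as in the proof of Proposition~\ref{prop:complexity}, using the superadditivity hypotheses~\eqref{eq:superadditivity} --- into the single product displayed in the statement. Apart from this bookkeeping there is no real obstacle: the substantive work, namely controlling the loss of $p$-adic precision so that all these operations stay in $\OK/\pi^{eM}\OK$, has already been carried out in Theorem~\ref{thm:mainthmbis}.
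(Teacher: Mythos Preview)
Your proposal is correct and matches the paper's intent: the corollary is stated in the paper without proof, as an immediate consequence of Proposition~\ref{prop:complexity} (algebraic operation count) together with Theorem~\ref{thm:mainthmbis} (all operations stay in $\OK/\pi^{eM}\OK$), and your argument spells out precisely this multiplication of the operation count by $\cA(K;M)$. Your additional remarks on divisions and Gaussian elimination are more detail than the paper provides, but they are accurate and do no harm.
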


\subsection{Precision analysis}
\label{subsec:precisionanalysis}
The goal of this subsection is to prove Theorem~\ref{thm:mainthmbis}. The proof relies on the the theory of "differential precision" developed in \cite{carova14,carova15}.\\
We study the solution $X(t)$ of Equation~\eqref{eq:nonlinearequation} when $G(t)$ varies, with the assumption $H_\mathrm{f}(0)$ is invertible in ${M}_g \! \left( \OK \right)$. Proposition~\ref{prop:solutionexistence} showed that Equation~\eqref{eq:nonlinearequation} has a unique solution $X\! \left( G\right) \in  \Kt  ^g$. Moreover, if we examine the proof of Proposition~\ref{prop:solutionexistence}, we see that the $n+1$ first coefficients of the vector $X\! \left(G\right)$ depends only on the first $n$ coefficients of $G$. This gives a well-defined function
\begin{equation*}
\begin{array}{rcl}
X_n \, : \; \left( \Kt / \left( t^n \right) \right) ^g & \longrightarrow &  \left( t\Kt / \left( t^{n+1} \right) \right) ^g \\
G & \longmapsto & X\! \left(G \right)
\end{array}
\end{equation*}
for a given positive integer $n$. In addition, the proof of Proposition~\ref{prop:solutionexistence} states that for $G \in \left( \Kt / \left( t^n \right) \right) ^g$, $X_n \! \left(G \right)$ can be expressed as a polynomial in $ G(0) , G'(0), \ldots , G^{(n-1)}(0)$ with coefficients in $K$, therefore $X_n$ is locally analytic.
\begin{prop}
\label{prop:differentialX}
For $G \in  \left( \Kt / \left( t^n \right) \right) ^g$, the differential of $X_n$ with respect to $G$ is the following function
\begin{equation*}
\begin{array}{rcl}
dX_n \! \left(G \right) \, : \;  \left( \Kt / \left( t^n \right) \right) ^g & \longrightarrow &  \left( t\Kt / \left( t^{n+1} \right) \right) ^g \\
\delta G & \longmapsto & \left( H_\mathrm{f} \left( X_n \! \left(G\right) \right) \right) ^{-1} \cdot \mathlarger{\int} \delta G.
\end{array}
\end{equation*}
\end{prop}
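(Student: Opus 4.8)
The plan is to differentiate the functional equation satisfied by $X_n$ with respect to $G$ and then recognise the resulting linear relation as precisely the one already handled in the Newton step. Write $X := X_n(G) \in \left( t\Kt/(t^{n+1}) \right)^g$ for the solution, so that $H_\mathrm{f}(X) \cdot X' = G$ holds in $\left( \Kt/(t^n) \right)^g$. Since $X_n$ is locally analytic (as recalled just before the statement), the differential $dX_n(G)$ exists; I set $h := dX_n(G)(\delta G) \in \left( t\Kt/(t^{n+1}) \right)^g$, so in particular $h(0) = 0$ (equivalently, differentiate the identity $X_n(G)(0) = 0$).

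First I would differentiate the map $G \mapsto H_\mathrm{f}(X_n(G)) \cdot X_n(G)'$ in the direction $\delta G$. By the chain rule applied to $G \mapsto H_\mathrm{f}(X_n(G))$, using the formula \eqref{eq:chainformula} for $dH_\mathrm{f}$, its differential is $dH_\mathrm{f}(X)(h)$; since $t$-differentiation is $K$-linear and commutes with differentiation in the parameter $G$ (clear from the polynomial description of $X_n$ in $G(0), \ldots, G^{(n-1)}(0)$), the differential of $G \mapsto X_n(G)'$ is $h'$. The product rule for the matrix--vector product, together with $H_\mathrm{f}(X) \cdot X' = G$, then gives
\begin{equation*}
dH_\mathrm{f}(X)(h) \cdot X' + H_\mathrm{f}(X) \cdot h' = \delta G .
\end{equation*}
Next I would invoke the identity $dH_\mathrm{f}(X)(h) \cdot X' + H_\mathrm{f}(X) \cdot h' = \left( H_\mathrm{f}(X) \cdot h \right)'$ — this is the same computation used to pass from Equation~\eqref{eq:newtoneq} to Equation~\eqref{eq:newtoniteration} — so that the relation above reads $\left( H_\mathrm{f}(X) \cdot h \right)' = \delta G$.

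Integrating component-wise and using $h(0) = 0$, hence $\left( H_\mathrm{f}(X) \cdot h \right)(0) = 0$, yields $H_\mathrm{f}(X) \cdot h = \int \delta G$. Finally, since $X \in \left( t\Kt/(t^{n+1}) \right)^g$ we have $H_\mathrm{f}(X) \equiv H_\mathrm{f}(0) \pmod{t}$, and $H_\mathrm{f}(0)$ is invertible by the standing hypothesis, so $H_\mathrm{f}(X)$ is invertible in $M_g\!\left( \Kt/(t^{n+1}) \right)$; left-multiplying by its inverse gives $h = \left( H_\mathrm{f}(X_n(G)) \right)^{-1} \cdot \int \delta G$, which is the asserted formula for $dX_n(G)(\delta G)$.

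There is no genuine obstacle here beyond careful bookkeeping: the delicate points are merely that $dX_n(G)$ is well defined (guaranteed by the local analyticity recalled above), that parameter- and $t$-differentiation commute, and the reuse of the telescoping identity relating $dH_\mathrm{f}(X)(h) \cdot X' + H_\mathrm{f}(X) \cdot h'$ to $\left( H_\mathrm{f}(X) \cdot h \right)'$, already verified while deriving the Newton iteration. One should also keep track of the truncation orders — $X$ and $h$ modulo $t^{n+1}$, $\delta G$ and the various derivatives modulo $t^n$ — but these match up automatically.
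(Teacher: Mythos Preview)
Your proof is correct and follows essentially the same route as the paper: differentiate the defining relation $H_\mathrm{f}(X_n(G))\cdot X_n(G)'=G$ with respect to $G$, recognise the left-hand side as $\bigl(H_\mathrm{f}(X)\cdot h\bigr)'$ via the same telescoping identity used in deriving the Newton iteration, then integrate and invert $H_\mathrm{f}(X)$. Your write-up is in fact slightly more explicit than the paper's about the bookkeeping (why $h(0)=0$, why $H_\mathrm{f}(X)$ is invertible, and the commutation of $t$- and parameter-differentiation).
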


\begin{proof}
We differentiate the equation $  H _ \mathrm{f} \! \left( X_n \!  \left( G \right) \right) \cdot X_n \! \left(G \right)' = G$ with respect to $G$. We obtain the following relation
\begin{equation}
\label{eq:proofdifferentialX}
H _ \mathrm{f} \! \left( X_n \! \left( G \right) \right) \cdot \big ( dX_n (G)(\delta G) \big )^{'} + d H_\mathrm{f} \! \left( X_n \! \left(G \right) \right)\!  \left(dX_n(G)(\delta G)\right) \cdot X_n(G)' = \delta G
\end{equation}
where $d H_\mathrm{f} \! \left( X_n(G) \right)$ is the differential of $H_\mathrm{f}$ at $X_n(G)$ defined in \eqref{eq:chainformula}. Making use of the relation
\begin{equation*}
\big ( \left( H _\mathrm{f} \! \left( X_n(G) \right) \right) \cdot dX_n(G)(\delta G) \big ) ^{'} = H _ \mathrm{f} \! \left( X_n \! \left( G \right) \right) \cdot \big ( dX_n (G)(\delta G) \big )^{'} + d H_\mathrm{f}\! \left( X_n(G) \right) \! \left(dX_n(G)(\delta G) \right) \cdot X_n(G)' ,
\end{equation*}
Equation~\eqref{eq:proofdifferentialX} becomes
\begin{equation*}
\big ( H_\mathrm{f}\!  \left( X_n(G)\right)  \cdot dX_n(G)(\delta G) \big ) ^{'} = \delta G.
\end{equation*}
Integrating the above relation and multiplying by $ \left( H_\mathrm{f} \! \left( X_n(G) \right) \right)^{-1}$ we get the result.
\end{proof}
We now introduce some norms on $\left( \Kt / \left( t^n \right) \right) ^g$ and $\left( t\Kt / \left( t^n \right) \right) ^g$. We set $ E_n = \left( \Kt / \left( t^n \right) \right) ^g$ and $F_n =\left( t\Kt / \left( t^{n+1} \right) \right) ^g$; for instance, $X_n$ is a function from $E_n$ to $F_n$.\\
First, we equip the vector space $K_n := \Kt / \left( t^n \right)$ with the usual Gauss norm
\begin{equation*}
 \Vert a_0 + a_1 t + \cdots + a_{n-1}t^{n-1} \Vert _{K_n} = \max \left( \left \vert a_0 \right \vert ,  \left \vert a_1 \right \vert, \ldots ,  \left \vert a_{n-1} \right \vert \right).
\end{equation*}
We endow $F_n$ with the norm obtained by the restriction of the induced norm $\Vert . \Vert$ on $F_n$: for every $X(t)=  \left( x_i(t) \right) _{i} \in F_n,$
\begin{equation*}
\left\Vert X(t) \right\Vert _ {F_n}  =  \max \limits _{i} \left\Vert x_i(t) \right\Vert _ {K_n} .
\end{equation*}
On the other hand, we endow $E_n$ with the following norm: for every $X(t) =  \left( x_i(t) \right) _i \in E_n,$
\begin{equation*}
\left\Vert X(t) \right \Vert _{E_n} =  \left\Vert \int X(t) \,  \right \Vert _{F_n} = \max \limits _{ i } \left \Vert  \int  x_i(t) \, \right \Vert _ {K_n}.
\end{equation*}
\begin{lem}
\label{lem:compatible}
Let $A \in {M}_g \!  \left( \OKt /(t^n) \right)$. If there exists a vector $x(t) \in  \left( \OKt / \left( t^n \right) \right) ^g$ such that $ \Vert A\,x \Vert _{F_n} < 1$ then $A$ is not invertible in ${M}_g \!  \left( \OKt /(t^n) \right)$.
\end{lem}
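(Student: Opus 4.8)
The plan is to prove the contrapositive: assuming $A$ is invertible in $M_g\!\left(\OKt/(t^n)\right)$, show that $A$ cannot send a unit‑norm integral vector to a vector of norm $<1$, so that no $x$ with $\Vert Ax\Vert_{F_n}<1$ (and $x$ primitive, i.e. $\Vert x\Vert_{F_n}=1$) can exist. The one structural fact I would use at the outset is that ``invertible in $M_g\!\left(\OKt/(t^n)\right)$'' already means that $A^{-1}$ has all of its entries in $\OKt/(t^n)$; this integrality is what drives everything.

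The quickest route is reduction modulo the uniformizer. The reduction map $\OKt/(t^n)\to \bar R:=(\OK/\pi\OK)\llbracket t\rrbracket/(t^n)$ is a ring homomorphism, hence induces a map $M_g\!\left(\OKt/(t^n)\right)\to M_g(\bar R)$ compatible with matrix–vector multiplication. The hypothesis $\Vert Ax\Vert_{F_n}<1$ says exactly that every coefficient (up to degree $n-1$, which is all of them) of each component of $Ax$ lies in $\pi\OK$, i.e. $\bar A\bar x=0$ in $\bar R^g$. Reducing $A A^{-1}=\mathrm{Id}$ modulo $\pi$ shows $\bar A$ is invertible over $\bar R$ with inverse $\overline{A^{-1}}$, whence $\bar x=\overline{A^{-1}}\,\bar A\bar x=0$; but $\bar x=0$ means $\Vert x\Vert_{F_n}<1$, contradicting primitivity of $x$. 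Hence $A$ is not invertible.

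The same conclusion can be reached directly in terms of the norms in the statement, which is perhaps closer to how the lemma will be used downstream. The truncated Gauss norm $\Vert\cdot\Vert_{K_n}$ is submultiplicative — for $k\le n-1$ the $k$-th coefficient of a product of power series involves only coefficients of index $\le n-1$ of the factors — so for $C=(c_{ij})\in M_g\!\left(\OKt/(t^n)\right)$ and an integral vector $v=(v_j)$ one has $\Vert (Cv)_i\Vert_{K_n}=\Vert \sum_j c_{ij}v_j\Vert_{K_n}\le \max_j\Vert c_{ij}\Vert_{K_n}\Vert v_j\Vert_{K_n}\le \max_j\Vert v_j\Vert_{K_n}$, using $\Vert c_{ij}\Vert_{K_n}\le 1$. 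Thus multiplication by an integral matrix is non‑expanding for the norm $\max_i\Vert\cdot\Vert_{K_n}$ underlying $\Vert\cdot\Vert_{F_n}$. Applying this to $A^{-1}$ and $v=Ax$ gives $\Vert x\Vert_{F_n}=\Vert A^{-1}(Ax)\Vert_{F_n}\le \Vert Ax\Vert_{F_n}<1$, again contradicting primitivity of $x$.

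I do not anticipate a genuine obstacle. The only points that need a little care are the bookkeeping with the truncation modulo $t^n$ (submultiplicativity of the Gauss norm survives it), the remark that invertibility in the matrix ring already supplies an integral inverse, and the fact that the statement is only non‑vacuous for a primitive $x$ (else $x=\pi e_1$, $A=\mathrm{Id}$ is a counterexample), so it is implicitly read with $\Vert x\Vert_{F_n}=1$ in force — which is presumably the way it is invoked in the precision analysis.
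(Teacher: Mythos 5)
Your first argument is exactly the paper's proof: reduce modulo the uniformizer, observe that $\Vert Ax\Vert_{F_n}<1$ forces $\bar A\,\bar x=0$ over the residue field, and conclude that $\bar A$ (hence $A$) cannot be invertible; your second, norm-based argument is just a reformulation of the same idea. You are also right that the statement is only non-vacuous for a primitive $x$ with $\Vert x\Vert_{F_n}=1$ --- the paper's proof silently uses $\bar x\neq 0$ at exactly the point you flag, and the lemma is indeed only ever invoked (in Lemma~\ref{lem:isometry}) with such an $x$.
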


\begin{proof}
Write $A = (a_{ij} (t) )_{i,j}$ and $x(t) = (x_1(t) , \ldots , x_g(t))$. By definition, the norm $\Vert A \, x \Vert_{F_n}$ is equal to
\begin{equation*}
\Vert A \, x \Vert_{F_n} = \max \limits _i \left \Vert \sum \limits _j {a_{ij} x_j} \right \Vert_{K_n}.
\end{equation*}
Therefore, the condition $\Vert A \, x \Vert_{F_n} <1$ is equivalent to the following inequality
\begin{equation}
\label{eq:proofcompatible}
\left \Vert \sum \limits _j {a_{ij} x_j} \right \Vert_{K_n} <1
\end{equation}
for all $i = 1 , \ldots ,g$. Let $k$ be the residue field of $K$. Equation~\eqref{eq:proofcompatible} implies that $ \sum \limits _j {a_{ij} x_j} =0$ in $k$. Hence, the reduction of $A$ in ${M}_g \!  \left( \kt /(t^n) \right)$ is not invertible and $A$ is not invertible in  ${M}_g \!  \left( \OKt /(t^n) \right)$.
\end{proof}

\begin{lem}
\label{lem:isometry}
Let $G \in \left( \OKt / \left( t^n \right) \right) ^g$. We assume that $X_n \! \left(G \right) \in \left( t\OKt / \left( t^n \right) \right) ^g$, then $dX_n\! \left( G \right) \, : \; E_n \longrightarrow F_n$ is an isometry.
\end{lem}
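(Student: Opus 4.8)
The plan is to factor $dX_n(G)$ as a composite of two isometries. By Proposition~\ref{prop:differentialX}, for $\delta G \in E_n$ we have
\[
dX_n(G)(\delta G) \;=\; \big(H_\mathrm{f}(X_n(G))\big)^{-1}\cdot \int \delta G ,
\]
so $dX_n(G)$ is the composite of the integration map $E_n \to F_n$, $\delta G \mapsto \int\delta G$, followed by left multiplication on $F_n$ by the matrix $M := \big(H_\mathrm{f}(X_n(G))\big)^{-1}$. The integration map is an isometry essentially by definition, since $\Vert \delta G\Vert_{E_n} = \Vert \int\delta G\Vert_{F_n}$ for every $\delta G$, and it is a bijection from $E_n$ onto $F_n$. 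So it remains to check that left multiplication by $M$ is an isometry of $(F_n,\Vert\cdot\Vert_{F_n})$.

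For this, the first step is to show that $A := H_\mathrm{f}(X_n(G))$ is invertible not merely over $\Kt/(t^n)$ but over the \emph{integral} truncated ring $\OKt/(t^n)$, so that $M = A^{-1}$ also has all its entries in $\OKt/(t^n)$. Indeed $\mathrm{f}\in M_g(\OKt)$ and, by hypothesis, $X_n(G)\in \left(t\OKt/(t^n)\right)^g$, so $A$ has entries in $\OKt/(t^n)$; moreover $A$ reduces modulo $t$ to $H_\mathrm{f}(0)$, which is invertible in $M_g(\OK)$ by the standing assumption of this subsection. Hence the constant coefficient of $\det A$ equals $\det H_\mathrm{f}(0)$, a unit of $\OK$, and since $t$ is nilpotent in $\OKt/(t^n)$ this forces $\det A$ to be a unit of $\OKt/(t^n)$. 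Thus $A$ is invertible over $\OKt/(t^n)$, and both $A$ and $M = A^{-1}$ have entries of Gauss norm at most $1$.

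Finally, since the Gauss norm on $\Kt/(t^n)$ is submultiplicative and the entries of $M$ have norm $\leq 1$, we get $\Vert My\Vert_{F_n}\leq \Vert y\Vert_{F_n}$ for every $y\in F_n$; applying the same estimate to $M^{-1}=A$ gives $\Vert y\Vert_{F_n} = \Vert M^{-1}(My)\Vert_{F_n}\leq \Vert My\Vert_{F_n}$, whence $\Vert My\Vert_{F_n}=\Vert y\Vert_{F_n}$. So multiplication by $M$ is an isometry of $F_n$, and composing with the integration isometry shows that $dX_n(G)$ is an isometry. The only step needing a little care is the integral invertibility of $A$ — this is exactly where the hypotheses $\mathrm{f}\in M_g(\OKt)$, $H_\mathrm{f}(0)\in\text{GL}_g(\OK)$ and $X_n(G)\in\left(t\OKt/(t^n)\right)^g$ are used; one could alternatively extract the reverse inequality from Lemma~\ref{lem:compatible}, but the argument via $M^{-1}$ is more direct, and everything else is routine ultrametric bookkeeping.
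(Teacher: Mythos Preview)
Your proof is correct and follows essentially the same strategy as the paper: factor $dX_n(G)$ as integration (an isometry by definition of $\Vert\cdot\Vert_{E_n}$) followed by multiplication by $H_\mathrm{f}(X_n(G))^{-1}$, and then use the integral invertibility of $H_\mathrm{f}(X_n(G))$ over $\OKt/(t^n)$ to conclude that this multiplication is an isometry. The only difference is in how you establish the reverse inequality: the paper invokes Lemma~\ref{lem:compatible} (contrapositively, an invertible integral matrix cannot send a unit-norm integral vector to something of norm $<1$), whereas you use the symmetric $M/M^{-1}$ contraction argument directly---as you yourself note, these are interchangeable, and your version is arguably cleaner.
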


\begin{proof}
The assumptions $X_n(G) \in \left( t\OKt / \left( t^n \right) \right) ^g$ and $H_\mathrm{f}(0) \in \text{GL}_g  \left (\OK \right )$ guarantee the invertibility of $H _\mathrm{f} \! \left( X_n(G) \right)$ in ${M}_g \!  \left( \OKt \right)$. Let $\delta G \in E_n$ such that $\Vert \delta G\Vert _{E_n} =1$. Using the fact that $H(X(G))^{-1} \int \delta G \in  \left( t\OKt / \left( t^n \right) \right) ^g$ and applying Lemma~\ref{lem:compatible}, we get
\begin{equation*} 
\Vert dX_n(G)(\delta G) \Vert = \Vert H(X(G))^{-1} \int \delta G  \Vert _{F_n} =1.
\end{equation*}
\end{proof}

We define the following function:
\begin{equation*}
\begin{array}{rcl}
\tau _ n \, : \, F_n \times E_n & \longrightarrow &  \text{Hom}(E_n, F_n)\\
( X \, , \, G) & \longmapsto & \left( \delta G \mapsto \left( H_\mathrm{f} \! \left( X \right) \right)^{-1} \cdot \mathlarger{\int} \delta G \right).
\end{array}
\end{equation*}

By Proposition~\ref{prop:differentialX}, the map $dX_n$ is equal to $ \tau _n \circ ( X_n , \text{id} )$, where id denotes the identity map on $E_n$.

\begin{lem}
\label{lem:Lambda}
Let $x \in \mathbb{R}$ such that $x < -2\dfrac{\log p}{p-1}$, then $\Lambda \! \left( X_n \right)_{\geq 2} \! (x)  < x$.
\end{lem}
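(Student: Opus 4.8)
The plan is to read the statement through the ``differential precision'' machinery of \cite{carova14,carova15}. Recall that $\Lambda(X_n)_{\geq 2}(x)=\sup_{i\geq 2}\bigl(\log\lVert\Phi_i\rVert+ix\bigr)$, where $\Phi_i$ is the $i$-th homogeneous component of the Taylor expansion of the locally analytic map $X_n$ at the base point, which we take to be $G=0$ (so that $X_n(0)=0$ by uniqueness), and $\lVert\Phi_i\rVert$ is its operator norm from $E_n$ to $F_n$. With this description the lemma reduces to the estimate
\begin{equation*}
\log\lVert\Phi_i\rVert\ \leq\ \frac{i}{p-1}\log p\qquad(i\geq 2),
\end{equation*}
together with an elementary optimization that I would carry out first so the target is explicit: if $x<-2\frac{\log p}{p-1}$ then $x+\frac{\log p}{p-1}<-\frac{\log p}{p-1}<0$, so $i\mapsto i\bigl(x+\frac{\log p}{p-1}\bigr)$ is strictly decreasing on $i\geq 2$, whence
\begin{equation*}
\Lambda(X_n)_{\geq 2}(x)\ \leq\ \sup_{i\geq 2} i\Bigl(x+\frac{\log p}{p-1}\Bigr)\ =\ 2x+\frac{2\log p}{p-1}\ <\ x,
\end{equation*}
the last inequality being exactly the hypothesis on $x$.

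For the estimate I would set up a recursion for the components $\Phi_i$. Differentiating $H_\mathrm{f}(X_n(G))\cdot X_n(G)'=G$ repeatedly in $G$ at $G=0$ and using $X_n(0)=0$ (which makes $X_n(0)'=0$ and so kills the most dangerous terms), one obtains for $i\geq 2$ a relation
\begin{equation*}
H_\mathrm{f}(0)\,\Phi_i'\ =\ -R_i ,
\end{equation*}
where $R_i$ is a finite sum of products of the (constant, integral) Taylor coefficients of $H_\mathrm{f}$ at $0$ with lower components $\Phi_j$ and their derivatives $\Phi_j'$ ($j<i$); for $i=1$ this is $\Phi_1=dX_n(0)=H_\mathrm{f}(0)^{-1}\!\int(\cdot)$ of Proposition~\ref{prop:differentialX}, with $\lVert\Phi_1\rVert=1$ by Lemma~\ref{lem:isometry}. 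Since $\mathrm{f}\in M_g(\OKt)$ and $H_\mathrm{f}(0)\in\mathrm{GL}_g(\OK)$, the Taylor coefficients of $H_\mathrm{f}$ at $0$ — and those of matrix inversion around $H_\mathrm{f}(0)$ — are integral, so $H_\mathrm{f}(0)^{-1}$ and the $R_i$ introduce no denominators; the sole source of non-integrality in $\Phi_i=-H_\mathrm{f}(0)^{-1}\int R_i$ is the integration $\int$.

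It then remains to control how these integration denominators accumulate. Because $X_n(0)=0$ the recursion is triangular, and solving it shows that each $\Phi_i$ is built from $\delta G$ by integral operations together with iterated integrations that enter only through total derivatives of already-controlled quantities, so that in passing from $R_i$ to $\Phi_i$ the successive integrations never create more than a factorial's worth of denominator; since $\upsilon_p(k!)=\frac{k-s_p(k)}{p-1}<\frac{k}{p-1}$ (with $s_p(k)$ the sum of the base-$p$ digits of $k$), this gives $\log\lVert\Phi_i\rVert\leq\frac{i}{p-1}\log p$, the weighted norm on $E_n$ — the one for which $\int$ is an isometry, used in Lemma~\ref{lem:isometry} — absorbing the $n$-dependent ``carry'' part of the loss. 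I expect this last step, the denominator bookkeeping through the recursion, to be the main obstacle: it is where the peculiar norm on $E_n$ is essential and where the bound, and hence the conclusion of the lemma, is made uniform in $n$.
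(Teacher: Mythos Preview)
Your final reduction is exactly right: once one has
\[
\Lambda(X_n)_{\geq 2}(x)\ \leq\ 2\Bigl(x+\frac{\log p}{p-1}\Bigr)\qquad\text{for }x\leq -\frac{\log p}{p-1},
\]
the conclusion follows immediately from $x<-2\frac{\log p}{p-1}$, and this is also how the paper finishes. The difference lies in how the displayed bound is obtained.

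The paper does not attempt to bound the homogeneous components $\Phi_i$ of $X_n$ directly. Instead it exploits the self-referential factorization already recorded just before the lemma,
\[
dX_n\ =\ \tau_n\circ(X_n,\mathrm{id}),
\]
where $\tau_n:(X,G)\mapsto\bigl(\delta G\mapsto H_\mathrm{f}(X)^{-1}\!\int\delta G\bigr)$ has integral Taylor coefficients in $X$ (since $H_\mathrm{f}(0)\in\mathrm{GL}_g(\OK)$ and $\mathrm{f}$ is integral), so that $\Lambda(\tau_n)\leq 0$ on the relevant range, while $\Lambda(\mathrm{id})(x)=x$. Feeding these two facts into the black-box \cite[Proposition~2.5]{carova15} --- a composition/bootstrap estimate tailored to maps whose differential factors through themselves --- yields the bound $\Lambda(X_n)_{\geq 2}(x)\leq 2\bigl(x+\tfrac{\log p}{p-1}\bigr)$ in one line, with no recursion to unwind.

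Your approach is the ``unpacked'' version of this: you set up the recursion $\Phi_i=-H_\mathrm{f}(0)^{-1}\!\int R_i$ and try to prove $\log\lVert\Phi_i\rVert\leq \frac{i}{p-1}\log p$ by tracking integration denominators through products of lower $\Phi_j$ and $\Phi_j'$. This is honest and in principle workable, but the step you yourself flag as ``the main obstacle'' is genuinely the whole difficulty: the interaction between the $t$-derivative hidden in $R_i$, the outer $\int$, and the weighted norm on $E_n$ is precisely the bookkeeping that \cite[Proposition~2.5]{carova15} packages once and for all. As written, your argument stops short of establishing the coefficient bound; the paper sidesteps the recursion entirely by invoking that proposition. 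What your route would buy, if completed, is a self-contained proof not relying on \cite{carova15}; what the paper's route buys is brevity and a clear separation between the structural input (the factorization of $dX_n$) and the analytic input (the cited estimate).
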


\begin{proof}
One checks easily that $\Lambda (\text{id})(x) = x$ and, by Lemma~\ref{lem:Lambda}, $\Lambda (\tau _n )(x) \geq 0$ for all $x \in \mathbb{R}_+^*$. Applying \cite[Proposition~2.5]{carova15}, we get
\begin{equation*}
\Lambda \! \left( X_n \right)_{\geq 2} (x)  \leq  2 \! \left( x + \dfrac{\log p}{p-1}   \right)
\end{equation*}
 for all $x \leq - \dfrac{\log p}{p-1}  $. Therefore, $\Lambda  \! \left( X_n \right)_{\geq 2} \! (x)  < x$ if $ x <  -2\dfrac{\log p}{p-1}  $.
\end{proof}

\begin{prop}
\label{prop:precisionlemma}
Let $B_{E_n} \! \left( \delta \right)$ $($resp. $B_{F_n} \! \left( \delta \right) )$ be the closed ball in $E_n$ $($resp. in $F_n)$ of center $0$ and radius $\delta$. Under the assumption of Lemma~\ref{lem:isometry}, we have for all $ \delta <  p^{\frac{-2}{p-1}}$,
\begin{equation*}
X_n \! \left( G + B_{E_n} \! \left( \delta \right) \right) = X_n \! \left( G \right) + B_{F_n} \! \left( \delta \right).
\end{equation*}
\end{prop}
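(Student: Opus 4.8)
The plan is to derive this statement from the \emph{precision lemma} of the differential-precision theory of Caruso--Roe--Vaccon \cite{carova14,carova15}. That lemma says, roughly: if $f\colon U\subseteq E\to F$ is a locally analytic map between finite-dimensional normed vector spaces over $K$ (hence complete) and $v\in U$ is a point at which the differential $df(v)$ is surjective, then there is an explicit threshold $\delta_0$ — computable from $df(v)$ and from the quantity $\Lambda(f)_{\geq2}$ controlling the second- and higher-order terms of the Taylor expansion of $f$ at $v$ — such that for every $0<\delta\le\delta_0$ one has $f\big(v+B_E(\delta)\big)=f(v)+df(v)\big(B_E(\delta)\big)$. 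When $df(v)$ is moreover an isometry, one has $df(v)\big(B_E(\delta)\big)=B_F(\delta)$ and the bound $\delta_0$ is governed purely by $\Lambda(f)_{\geq2}$: in additive (logarithmic) coordinates $x=\log\delta$, any $x$ with $\Lambda(f)_{\geq2}(x)<x$ is admissible.

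First I would apply this with $f=X_n\colon E_n\to F_n$ and $v=G$. The map $X_n$ is locally analytic — this is recorded just before Proposition~\ref{prop:differentialX}, since $X_n(G)$ is a polynomial in the jets $G(0),\dots,G^{(n-1)}(0)$ — and $E_n$, $F_n$ are finite-dimensional, so the framework applies. Under the standing hypothesis $X_n(G)\in\big(t\OKt/(t^n)\big)^g$, Lemma~\ref{lem:isometry} tells us that $dX_n(G)$ is an isometry; in particular it is surjective and $dX_n(G)\big(B_{E_n}(\delta)\big)=B_{F_n}(\delta)$ for every $\delta$. Thus the statement reduces to checking that every $\delta<p^{-2/(p-1)}$ lies below the admissible threshold.

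Next I would read off the threshold from Lemma~\ref{lem:Lambda}. Writing $x=\log\delta$, the condition $\delta<p^{-2/(p-1)}$ is the same as $x<-2\log p/(p-1)$, and Lemma~\ref{lem:Lambda} gives exactly $\Lambda(X_n)_{\geq2}(x)<x$ on that range. Since $dX_n(G)$ is isometric, there is no further loss coming from the norm of the differential or of a section of it, so this inequality is precisely what the precision lemma requires. Hence for every such $\delta$,
\begin{equation*}
X_n\big(G+B_{E_n}(\delta)\big)=X_n(G)+dX_n(G)\big(B_{E_n}(\delta)\big)=X_n(G)+B_{F_n}(\delta),
\end{equation*}
which is the claim.

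The only genuine work is the bookkeeping needed to match the hypotheses of the precision lemma verbatim: confirming local analyticity of $X_n$ on a neighbourhood of $G$, completeness of the normed spaces $E_n$ and $F_n$, and, above all, that the explicit admissible-radius bound of \cite{carova14,carova15} collapses, in the isometric case, to the single inequality $\Lambda(X_n)_{\geq2}(x)<x$ supplied by Lemma~\ref{lem:Lambda}. None of this is deep; the two substantive ingredients — the isometry of $dX_n(G)$ and the estimate on $\Lambda(X_n)_{\geq2}$ — are already in hand as Lemmas~\ref{lem:isometry} and \ref{lem:Lambda}, so the main obstacle is purely one of careful citation and verification rather than of new mathematics.
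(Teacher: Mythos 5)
Your proposal is correct and follows exactly the paper's own route: the paper invokes the precision lemma \cite[Proposition~3.12]{carova14} together with Lemma~\ref{lem:Lambda} to obtain $X_n(G+B_{E_n}(\delta))=X_n(G)+dX_n(G)(B_{E_n}(\delta))$ for $\delta<p^{-2/(p-1)}$, and then concludes via the isometry of Lemma~\ref{lem:isometry}, just as you do.
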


\begin{proof}
As a direct consequence of \cite[Proposition~3.12]{carova14} and Lemma~\ref{lem:Lambda}, we have the following formula
\begin{equation*}
 X_n \! \left( G + B_{E_n} \! \left( \delta \right) \right) = X_n \! \left( G \right) + dX_n \! \left(G\right) \! \left( B_{E_n} \! \left( \delta \right)\right),
 \end{equation*}
for all $\delta <  p^{\frac{-2}{p-1}}$.
The result follows from Lemma~\ref{lem:isometry}.
\end{proof}
We end this section by giving a proof of Theorem~\ref{thm:mainthmbis}.
\begin{proof}[Correctness proof of Theorem~\ref{thm:mainthmbis}]
Let $G, \mathrm{f}$ and $n$ be the input of Algorithm~\ref{algo:DiffSolve}. We first prove by induction on $n \geq 1$ the following equation
\begin{equation*}
H_\mathrm{f} \! \left( X_n \right) \cdot X_n^{'} = G \mod {(t^{n} , p^M)}.
\end{equation*}
 Let $m$ be a positive integer and $n = 2m+1$.
Let $e_ m = {G - H_\mathrm{f} \!  \left( X_m \right) \cdot  X'_m}$.
From the relation 
\begin{equation*}
 X_n = X_m + \left( H_\mathrm{f} \! \left( X_m \right) \right)^{-1}  \mathlarger{\int} {e_m\, dt}\; \mod {(t^{n+1} , p^M)}\,,
\end{equation*}
we derive the two formulas
\begin{equation}
\label{eq:proof1}
H_\mathrm{f} \! \left( X_m \right) \cdot X_n =  H_\mathrm{f} \! \left( X_m \right) \cdot X_m +  \mathlarger{\int} {e_m\, dt}\; \mod {(t^{n+1} , p^M)}
\end{equation}
and
\begin{align*}
H_\mathrm{f} \! \left( X_m \right) \cdot X'_n &=  H_\mathrm{f} \! \left( X_m \right) \cdot X'_m +  \left ( H_\mathrm{f} \! \left( X_m \right) \right)' \cdot \left( X_m - X_n\right) + e_m \mod {(t^{n} , p^M)}\,
\\ & = G + \left( H_\mathrm{f} \! \left( X_m \right)\right)' \cdot \left( X_m - X_n\right)\mod {(t^{n} , p^M)}\,
\\ & = G - \left( H_\mathrm{f} \! \left( X_m \right)\right) ' \cdot \left( H_\mathrm{f} \! \left( X_m \right) \right)^{-1}  \mathlarger{\int} {e_m\, dt \mod {(t^{n} , p^M)}}\,.
\end{align*}
Using the fact that the first $m$ coefficients of $e_m$ vanish, we get
\begin{align}
\label{eq:proof2}
H_\mathrm{f} \! \left( X_n \right)  \cdot X'_n  = H_\mathrm{f} \! \left( X_m \right) \cdot X'_n + dH _\mathrm{f} \! \left(X_m \right) \! \left( \left( H_\mathrm{f} \! \left( X_m \right) \right)^{-1}  \mathlarger{\int} {e_m\, dt} \right) \cdot X'_m \mod {(t^{n} , p^M)}\,.
\end{align}
In addition, one can easily verifies
\begin{equation*}
dH _\mathrm{f} \! \left(X_m \right) \! \left( \left( H_\mathrm{f} \! \left( X_m \right) \right)^{-1}  \mathlarger{\int} {e_m\, dt} \right)\cdot X'_m = \left( H_\mathrm{f} \! \left( X_m \right)\right)' \cdot \left( H_\mathrm{f} \! \left( X_m \right) \right)^{-1}  \mathlarger{\int} {e_m\, dt}
\end{equation*}
Hence, Equation~\eqref{eq:proof2} becomes
\begin{equation*}
H_\mathrm{f} \! \left( X_n \right)  \cdot X'_n = G \mod {(t^{n} , p^M)}.
\end{equation*}
Now, we define $ G_n = H_\mathrm{f} \! \left( X_n \right)  \cdot X'_n$ so that we have $X_n = X_n \! \left( G_n \right)$ and $\Vert G - G_n \Vert _{F_{n}} \leq p^{-M}$. Therefore, $ \Vert G - G_n \Vert _{E_{n}} \leq p^{-M+\lfloor \log _p(n) \rfloor}$. By Proposition~\ref{prop:precisionlemma}, we have that
\begin{equation*}
X_n \! \left( G_n \right) = X_n \! \left( G \right) \mod {(t^{n+1} , p^N)}.
\end{equation*}
Thus $ X_n = X_n \! \left( G \right) \mod {(t^{n+1} , p^N)}$.
\end{proof}



\section{Jacobians of curves and their isogenies}
\label{sec:Review}
Throughout this section, the letter $k$ refers to a fixed field of
characteristic different from two. Let $\bar{k}$ be a fixed
algebraic closure of
$k$. In Section~\ref{subsec:abelianvarieties}, we briefly recall some basic
elements about principally polarized abelian varieties and $(\ell , \ldots , \ell)$-isogenies between them; the notion of rational representation is discussed in Section~\ref{subsec:RationalRepresentation}. Finally, for a given rational representation, we construct a system of differential equations that we associate with it.


\subsection{$(\ell , \cdots , \ell)$-isogenies between abelian varieties}
\label{subsec:abelianvarieties}
Let $A$ be an abelian variety of dimension $g$ over $k$ and $A^{\vee }$ be its dual.
To a fixed line bundle $\mathcal{L}$ on $A$, we associate the morphism $\lambda _ \mathcal{L}$ defined as follows
\begin{equation*}
\begin{array}{rcl}
\lambda _\mathcal{L} \, : \:  A & \longrightarrow & A ^\vee  \\
x & \longmapsto & t_x ^* \mathcal{L} \otimes \mathcal{L}^{-1}
\end{array}
\end{equation*}
where $t_x$ denotes the translation by $x$ and $t_x^* \mathcal{L}$ is the pullback of $\mathcal{L}$ by $t_x$. \\
We recall from \cite{milne86} that an \textit{isogeny} between two abelian varieties is a surjective homomorphism of abelian varieties of finite kernel. The \textit{degree} of an isogeny is the number of preimages of a generic point in its codomain. \\
A \textit{polarization} $\lambda$ of $A$ is an isogeny $ \lambda : \, A \longrightarrow A^\vee$, such that over $\bar{k}$, $\lambda$ is of the form $\lambda _ \mathcal{L}$ for some ample line bundle $\mathcal{L}$ on $A_{\bar{k}}:= A \otimes \text{Spec}(\bar{k})$. 
When the degree of a polarization $\lambda$ of $A$ is equal to $1$, we say that $\lambda$ is a \textit{principal polarization} and the pair $(A, \lambda)$ is a \textit{principally polarized abelian variety}.
We assume in the rest of this subsection that we are given a principally polarized abelian variety $(A , \lambda)$. The \textit{Rosati involution} on the ring End$(A)$ of endomorphsims of $A$ corresponding to the polarization $\lambda$ is the map
\begin{equation*}
\begin{array}{rcl}
\text{ End}(A) & \longrightarrow & \text{End}(A) \\
\alpha & \longmapsto & \lambda ^{-1} \, \circ \alpha ^ \vee \circ \lambda.
\end{array}
\end{equation*}
The Rosati involution is crucial for the study of the division algebra End$(A) \otimes \mathbb{Q}$, but for our purpose, we only state the following result.

\begin{prop}\cite[Proposition~14.2]{milne86}
\label{prop:NS}
For every $\alpha \in$ End$\,\!(A)$ fixed by the Rosati involution, there exists, up to algebraic equivalence, a unique line bundle $ \mathcal{L} _ A  ^\alpha $ on $A$ such that $\lambda _{ \mathcal{L} _A ^\alpha } = \lambda \circ \alpha$. In particular, taking $\alpha$ to be the identity endomorphism denoted ``$1$'', there exists a
unique line bundle $\mathcal{L}_A ^1 $ such that $\lambda _{ \mathcal{L}_A ^1
} = \lambda $.
\end{prop}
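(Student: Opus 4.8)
The plan is to identify the assignment $\alpha\mapsto\mathcal{L}_A^\alpha$ with the inverse of the classical isomorphism between the N\'eron--Severi group $\operatorname{NS}(A)$ of $A$ and the group of \emph{symmetric} homomorphisms $A\to A^\vee$. I would first recall two structural facts. The map $\mathcal{L}\mapsto\lambda_\mathcal{L}$ is a homomorphism $\operatorname{Pic}(A)\to\Hom(A,A^\vee)$ whose kernel is exactly $\operatorname{Pic}^0(A)$, so it induces an injective homomorphism $\operatorname{NS}(A):=\operatorname{Pic}(A)/\operatorname{Pic}^0(A)\hookrightarrow\Hom(A,A^\vee)$; and, writing $\kappa_A\colon A\xrightarrow{\sim}A^{\vee\vee}$ for the canonical isomorphism, a homomorphism $\psi\colon A\to A^\vee$ lies in the image if and only if it is symmetric, i.e. $\psi^\vee\circ\kappa_A=\psi$. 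Both statements are classical over $\bar k$ (Mumford); the general base follows by Galois descent, using that $A$ has a rational point. In particular every $\lambda_\mathcal{L}$ is symmetric, and hence so is the polarization $\lambda$ itself, since $\lambda=\lambda_\mathcal{M}$ for an ample $\mathcal{M}$ over $\bar k$ by the very definition of a polarization.

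Next I would use the principal polarization to rewrite the symmetry condition as the Rosati condition. Because $\lambda$ is an isomorphism, $\psi\mapsto\lambda^{-1}\circ\psi$ is an isomorphism $\Hom(A,A^\vee)\xrightarrow{\sim}\End(A)$, with inverse $\alpha\mapsto\lambda\circ\alpha$. For $\alpha\in\End(A)$ set $\psi=\lambda\circ\alpha$; then $(\lambda\circ\alpha)^\vee=\alpha^\vee\circ\lambda^\vee$, and since $\lambda^\vee\circ\kappa_A=\lambda$ (symmetry of $\lambda$) we obtain $\psi^\vee\circ\kappa_A=\alpha^\vee\circ\lambda$. Thus $\psi$ is symmetric if and only if $\alpha^\vee\circ\lambda=\lambda\circ\alpha$, i.e. $\lambda^{-1}\circ\alpha^\vee\circ\lambda=\alpha$, which is exactly the condition that $\alpha$ be fixed by the Rosati involution. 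Hence $\psi\mapsto\lambda^{-1}\circ\psi$ maps the subgroup of symmetric homomorphisms isomorphically onto the fixed subgroup of the Rosati involution.

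To conclude, let $\alpha\in\End(A)$ be fixed by the Rosati involution. Then $\psi:=\lambda\circ\alpha$ is symmetric, so by the first paragraph there is a line bundle $\mathcal{L}$ on $A$ with $\lambda_\mathcal{L}=\lambda\circ\alpha$, unique modulo $\operatorname{Pic}^0(A)$, that is, unique up to algebraic equivalence; this class is $\mathcal{L}_A^\alpha$. Specializing to $\alpha=1$ gives $\psi=\lambda$, and the unique class $\mathcal{L}_A^1$ with $\lambda_{\mathcal{L}_A^1}=\lambda$. I expect the only genuine difficulty to be the surjectivity half of the first paragraph — that every symmetric homomorphism is some $\lambda_\mathcal{L}$ — which over $\bar k$ is Mumford's description of $\operatorname{NS}$, and which one descends to $k$ by observing that $\psi=\lambda\circ\alpha$ is defined over $k$, so the class it cuts out in $\operatorname{NS}(A_{\bar k})$ is Galois-stable, and then invoking $A(k)\neq\emptyset$; the remaining steps are formal manipulations with duals of homomorphisms.
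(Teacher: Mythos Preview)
The paper does not give its own proof of this proposition; it simply quotes it from Milne's \emph{Abelian Varieties} (Proposition~14.2). Your argument is correct and is precisely the standard proof one finds there: identify $\operatorname{NS}(A)$ with the symmetric elements of $\Hom(A,A^\vee)$ via $\mathcal{L}\mapsto\lambda_{\mathcal{L}}$, and then check that under the isomorphism $\Hom(A,A^\vee)\xrightarrow{\sim}\End(A)$, $\psi\mapsto\lambda^{-1}\psi$, the symmetric elements correspond exactly to the Rosati-fixed endomorphisms. Your computation $\psi^\vee\circ\kappa_A=\alpha^\vee\circ\lambda$ is the right one, and your remark about descending the surjectivity from $\bar k$ to $k$ is the only point requiring care, which you address adequately.
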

The notion of \textit{algebraic equivalence} is defined as follows. We say that two line bundles $\mathcal{L}_1$ and $\mathcal{L}_2$ on $A$ are algebraically equivalent if they can be connected by a third line bundle, \textit{i.e.} if there exist a connected scheme $X$, two closed points $x_1,x_2 \in X$ and a line bundle $\mathcal{N}$ on $A \times X$, such that $ \mathcal{N}_{|{A \times\{x_1\}}} \simeq \mathcal{L}_1$ and $ \mathcal{N}_{|{A \times\{x_2\}}} \simeq \mathcal{L}_2$. We say that two divisors on $A$ are algebraically equivalent if their corresponding line bundles are. 

Using Proposition~\ref{prop:NS}, we give the definition of an $(\ell , \ldots , \ell)$-isogeny.
\begin{defi}
\label{def:Isogeny}
Let $(A_1 , \lambda _1)$ and $(A_2 , \lambda _2)$ be two principally polarized abelian varieties of dimension $g$ over $k$ and $\ell \in \mathbb{N}^*$. An $(\ell ,  \ldots , \ell )$-isogeny $I$ between $A_1$ and $A_2$ is an isogeny $I : \, A_1 \longrightarrow A_2$ such that
\begin{equation*}
 I ^* \mathcal{L}_{A_2} ^1 = \mathcal{L}_{A_1}^\ell,
 \end{equation*}
where $\mathcal{L}_{A_1}^\ell$ is the unique line bundle on $A_1$ associated with the multiplication by $\ell$ map.
\end{defi}

We now suppose that $A$ is the Jacobian of a  genus $g$ curve $C$ over $k$. We will always make the assumption that there is at least one $k$-rational point on $C$. Let $r$ be a positive integer and fix $P \in C$. We define $C^{(r)}$ to be the symmetric power of $C$ and $j_P^{(r)}$ to be the map

\begin{equation*}
\begin{array}{rcl}
 C^{(r)} & \longrightarrow & A \simeq J(C) \\
(P_1 , \ldots , P_r) & \longmapsto & [P_1 + \cdots P_r - r P].
\end{array}
\end{equation*}
If $r=1$ then the map $j_P^{(1)}$ is called the \textit{Jacobi map} with origin $P$.
We write $j_P$ for the map $j_P^{(1)}$.\\
The image of $j_P^{(r)}$ is a closed
subvariety of $A$ which can be also written as $r$ summands of
$j_P(C)$. Let $\Theta$ be the image of $j_P^{(g-1)}$, it is a divisor on $A$
and when $P$ is replaced by another point, $\Theta$ is replaced by a translate.  We call $\Theta$ the theta divisor associated with $A$.

\begin{remark}
\label{rem:LinebundleTheta}
If $A$ is the Jacobian of a curve $C$ and $\Theta$ its theta divisor, then
$\mathcal{L}_A^1 = \mathcal{L}( \Theta )$, where $\mathcal{L}( \Theta )$ is the sheaf associated to the divisor $\Theta$. 
\end{remark}

Using Remark~\ref{rem:LinebundleTheta}, Definition~\ref{def:Isogeny} for Jacobian varieties gives the following
\begin{prop}
\label{prop:Characterization}
Let $\ell \in \mathbb{N}^*$, $A_1$ and $A_2$ be the Jacobians of two 
algebraic curves over $k$ and $\Theta _1$ and $\Theta _2$ be the theta divisors associated to $A_1$ and $A_2$ respectively. If an isogeny $I \,: \, A_1 \longrightarrow A_2$ is an $(\ell , \ldots , 
\ell )$-isogeny then $I^* \Theta _2$ is algebraically equivalent to $ 
\ell \Theta _1$.
\end{prop}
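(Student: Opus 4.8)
The plan is to translate the statement into an identity between polarization morphisms and then invoke the standard dictionary between such morphisms and algebraic equivalence classes of line bundles. Recall from the theory of abelian varieties (see \cite{milne86}) the following facts, valid on any abelian variety $B$ over $k$: the assignment $\mathcal{L} \mapsto \lambda_{\mathcal{L}}$ is additive, i.e. $\lambda_{\mathcal{L} \otimes \mathcal{M}} = \lambda_{\mathcal{L}} + \lambda_{\mathcal{M}}$; its kernel consists exactly of the line bundles algebraically equivalent to zero, so it induces an injection $\mathrm{NS}(B) \hookrightarrow \Hom(B, B^\vee)$; and for a homomorphism $f \colon B_1 \to B_2$ and a line bundle $\mathcal{L}$ on $B_2$ one has $\lambda_{f^*\mathcal{L}} = f^\vee \circ \lambda_{\mathcal{L}} \circ f$, so in particular $f^*$ preserves algebraic equivalence.

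First I would pin down $\mathcal{L}_{A_1}^\ell$ up to algebraic equivalence. The endomorphism $[\ell]$ of $A_1$ is fixed by the Rosati involution associated with $\lambda_1$ (its image is $\lambda_1^{-1} \circ [\ell]^\vee \circ \lambda_1 = [\ell]$, since $[\ell]^\vee = [\ell]$ on $A_1^\vee$), so Proposition~\ref{prop:NS} applies and characterizes $\mathcal{L}_{A_1}^\ell$ by $\lambda_{\mathcal{L}_{A_1}^\ell} = \lambda_1 \circ [\ell]$. Because $\lambda_1$ is a homomorphism, $\lambda_1 \circ [\ell]_{A_1} = [\ell]_{A_1^\vee} \circ \lambda_1 = \ell\,\lambda_1$ in $\Hom(A_1, A_1^\vee)$. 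On the other hand, Remark~\ref{rem:LinebundleTheta} gives $\mathcal{L}(\Theta_1) = \mathcal{L}_{A_1}^1$, hence $\lambda_{\mathcal{L}(\Theta_1)} = \lambda_1$ and $\lambda_{\mathcal{L}(\ell \Theta_1)} = \ell\,\lambda_1$ by additivity. Comparing, $\lambda_{\mathcal{L}_{A_1}^\ell} = \lambda_{\mathcal{L}(\ell\Theta_1)}$, and injectivity of $\mathrm{NS}(A_1) \hookrightarrow \Hom(A_1, A_1^\vee)$ forces $\mathcal{L}_{A_1}^\ell$ to be algebraically equivalent to $\ell\Theta_1$.

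It remains to feed this into Definition~\ref{def:Isogeny}. Since $I$ is an $(\ell, \ldots, \ell)$-isogeny and $\mathcal{L}_{A_2}^1 = \mathcal{L}(\Theta_2)$ by Remark~\ref{rem:LinebundleTheta}, we have $I^*\mathcal{L}(\Theta_2) = I^*\mathcal{L}_{A_2}^1 = \mathcal{L}_{A_1}^\ell$. As $I^*$ preserves algebraic equivalence and $I^*\mathcal{L}(\Theta_2) = \mathcal{L}(I^*\Theta_2)$, we conclude that $I^*\Theta_2$ is algebraically equivalent to $\ell\Theta_1$, as claimed. All the individual steps are routine; the only thing to watch is that $\mathcal{L}_{A_1}^\ell$ and $\mathcal{L}_{A_2}^1$ are themselves only defined up to algebraic equivalence, so the argument must stay at the level of $\mathrm{NS}$ and never claim an actual equality of line bundles. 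The main (mild) obstacle is thus bookkeeping these equivalence classes and citing the correct functoriality statement $\lambda_{f^*\mathcal{L}} = f^\vee\circ\lambda_{\mathcal{L}}\circ f$, which is exactly what upgrades the identity $\lambda_{\mathcal{L}_{A_1}^\ell} = \ell\,\lambda_1$ into an honest algebraic equivalence.
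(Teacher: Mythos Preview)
Your proof is correct and follows essentially the same route as the paper's. Both arguments reduce to showing $\lambda_{\mathcal{L}_{A_1}^\ell} = \lambda_{(\mathcal{L}_{A_1}^1)^{\otimes \ell}}$ and then invoke the injectivity of $\mathrm{NS}(A_1) \hookrightarrow \Hom(A_1,A_1^\vee)$; the paper obtains this identity by writing out the theorem of the square explicitly, while you package the same computation as ``additivity of $\mathcal{L}\mapsto\lambda_{\mathcal{L}}$'' together with $\lambda_1\circ[\ell]=\ell\,\lambda_1$.
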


\begin{proof}
For all $x \in A_1$, the theorem of squares \cite[Theorem~5.5]{milne86} gives the following relation
\begin{equation*}
 t_{\ell x} ^* \,  \mathcal{L}_{A_1}^1 \otimes \big ( \mathcal{L}_{A_1}^1 \big ) ^{-1} = \left ( t_{x} ^* \,  \mathcal{L}_{A_1}^1 \otimes \big ( \mathcal{L}_{A_1}^1 \big ) ^{-1} \right ) ^{\otimes \ell} = t_x ^* \big ( \mathcal{L} _ {A_1} ^1 \big ) ^{\otimes \ell} \otimes \big ( (\mathcal{L}_{A_1} ^1 )^{\otimes \ell \,  } \big ) ^{-1}.
\end{equation*}
Meaning that,
\begin{equation*}
\lambda _ {(\mathcal{L}_{A_1}^1)^{\otimes \ell } } =  \lambda _ {\mathcal{L}_{A_1}^\ell }.
\end{equation*}
From Proposition~\ref{prop:NS}, we deduce that the line bundle  $\mathcal{L}_{A_1} ^\ell$ is algebraically equivalent to $ \big ( \mathcal{L}_{A_1}^1 \big ) ^{\otimes \ell}$, therefore  $I^*\mathcal{L}_{A_2} ^1$ and $ \big ( \mathcal{L}_{A_1}^1 \big ) ^{\otimes \ell}$ are algebraically equivalent. By Remark~\ref{rem:LinebundleTheta},  $I^*\mathcal{L}_{A_2} ^1$ corresponds to $ I^* \Theta _2$ and $ \big ( \mathcal{L}_{A_1}^1 \big ) ^{\otimes \ell}$ corresponds to $ \ell \Theta _1$.
\end{proof}


\subsection{Rational representation of an isogeny between Jacobians of hyperelliptic curves}
\label{subsec:RationalRepresentation}
We focus on computing an isogeny between Jacobians of hyperelliptic curves.
Let $C_1$ $($resp. $C_2)$ be a genus $g$ hyperelliptic curve over $k$, $J_1$ $($resp. $J_2)$ be its associated Jacobian and $\Theta _1$ $($resp. $\Theta _2)$ be its theta divisor. We suppose that there exists a separable isogeny $I :  J_1 \longrightarrow J_2$. Let $P \in C_1$ be a Weierstrass point, let $j_P  : C_1 \longrightarrow J_1$ be the Jacobi map with origin $P$. Generalizing \cite[Proposition~4.1]{kieffer20} gives the following proposition

\begin{prop}
\label{prop:RationalRepresentation}
The morphism $I \circ j_P$ induces a unique morphism $I_P : \; C_1  \longrightarrow  C_2 ^{(g)} $ such that the following diagram commutes

\begin{center}
\begin{tikzcd}[column sep=large]
	& C_2 ^{(g)}  \\
  C_1 \arrow[ur, "I_P"] \arrow[dr, "I \circ j_P"'] &              \\
 & J_2 \arrow[uu ,"\simeq"']
        \end{tikzcd}
\end{center}

\end{prop}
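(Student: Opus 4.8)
The plan is to use the fact that the theta divisor $\Theta_2 \subset J_2$ is, via the map $j_P^{(g-1)}$, the image of the $(g-1)$-fold sum of $j_P(C_2)$, together with the Abel--Jacobi picture identifying $C_2^{(g)}$ with a $\mathbb{P}^0$-bundle (generically a point) over $J_2$. Concretely, fix a Weierstrass point $Q \in C_2$ and let $j_Q^{(g)}: C_2^{(g)} \to J_2$ be the Abel--Jacobi map on the $g$-th symmetric power. By Riemann--Roch on a genus $g$ curve, a generic effective divisor of degree $g$ is the unique element in its linear equivalence class, so $j_Q^{(g)}$ is birational; in fact it is surjective and an isomorphism over the complement of the (translated) theta divisor. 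The strategy is: first, show that $I \circ j_P : C_1 \to J_2$ factors set-theoretically through the locus where $j_Q^{(g)}$ is an isomorphism, except possibly at finitely many points; second, invert $j_Q^{(g)}$ there to get a rational map $I_P : C_1 \dashrightarrow C_2^{(g)}$; third, use that $C_1$ is a smooth curve and $C_2^{(g)}$ is a smooth projective variety to extend $I_P$ to a genuine morphism defined everywhere (a rational map from a smooth curve to a projective variety is automatically a morphism); finally, uniqueness follows because any two such lifts agree on a dense open set of the curve $C_1$ and hence everywhere by separatedness.

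In more detail, the key steps in order are the following. (1) Recall that $j_Q^{(g)}: C_2^{(g)} \to J_2$ is surjective with all fibers being linear systems $|D|$; by Riemann--Roch the fiber over a point corresponding to a divisor class $[D]$ of degree $g$ is a single point precisely when $\ell(K_{C_2} - D) = 0$, i.e. outside a translate of $\Theta_2$. (2) Argue that $(I \circ j_P)(C_1)$ is not contained in that translated theta divisor: since $I$ is an isogeny (in particular dominant with finite kernel) and $j_P(C_1)$ generates $J_1$, the image curve $(I\circ j_P)(C_1)$ generates $J_2$, so it cannot lie inside a proper abelian subvariety or a translate of an ample divisor unless... actually more carefully, one only needs that it is not \emph{contained} in the bad locus, which holds because a curve generating an abelian variety meets the complement of any fixed proper closed subset. (3) Over the good open set $U \subset J_2$, the inverse $(j_Q^{(g)})^{-1}: U \to C_2^{(g)}$ is a morphism, so $(j_Q^{(g)})^{-1} \circ (I \circ j_P)$ is a morphism on the open dense $V := (I\circ j_P)^{-1}(U) \subset C_1$. (4) Since $C_1$ is a smooth projective curve and $C_2^{(g)}$ is projective, this rational map extends uniquely to a morphism $I_P : C_1 \to C_2^{(g)}$ on all of $C_1$ (valuative criterion / the standard fact that a rational map from a smooth curve to a projective variety is a morphism). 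By construction $j_Q^{(g)} \circ I_P = I \circ j_P$ on $V$, hence everywhere; and since $J_2$ is separated and $V$ is dense, this identity determines $I_P$ uniquely, giving the commutativity of the stated triangle (with the right-hand vertical arrow being $j_Q^{(g)}$, an isomorphism onto its image... strictly, the labelled ``$\simeq$'' should be read as the Abel--Jacobi map which is birational, or one replaces $C_2^{(g)}$ by the appropriate open subscheme).

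The main obstacle I expect is \textbf{step (2)}: one must verify that the image of $I \circ j_P$ genuinely avoids being swallowed by the locus where $j_Q^{(g)}$ fails to be an isomorphism, and — more delicately for the final application — that the induced map $I_P$ lands in the \emph{reduced} effective divisors / behaves well with respect to the Mumford representation, since later one wants $I_P$ described by $g$ coordinates $x_1(t),\dots,x_g(t)$. The cleanest route is probably to cite the precise statement of \cite[Proposition~4.1]{kieffer20} for $g=2$ and observe that the argument there — factoring through symmetric powers using Abel--Jacobi and extending over a smooth curve — is insensitive to the genus, only using that $C_1$ has a rational (Weierstrass) point, that $I$ is separable, and the Riemann--Roch description of the fibers of $j_Q^{(g)}$. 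A secondary technical point is that the diagram's vertical arrow is only an isomorphism generically; one should state Proposition~\ref{prop:RationalRepresentation} as producing a morphism $I_P$ together with the identity $j_Q^{(g)} \circ I_P = I \circ j_P$, which is all that is needed downstream.
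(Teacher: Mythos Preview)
The paper does not supply its own proof of this proposition: it simply states the result, prefaced by ``Generalizing \cite[Proposition~4.1]{kieffer20} gives the following proposition'', and moves on. So there is nothing to compare against beyond that citation.

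Your sketch is the standard Abel--Jacobi argument and is correct in outline. A few remarks. First, your worry about step~(2) can be eased: the locus in $J_2$ over which $j_Q^{(g)}$ has positive-dimensional fibres consists of classes $[D]$ with $h^0(D)\geq 2$; by Riemann--Roch this is the image of $C_2^{(g-2)}$ under $E\mapsto [K_{C_2}-E]$, hence has dimension at most $g-2$. So the indeterminacy locus of the birational inverse $J_2\dashrightarrow C_2^{(g)}$ has codimension at least $2$, and the image curve $(I\circ j_P)(C_1)$ cannot be contained in it once $g\geq 2$ unless that image is a point, which is excluded since $j_P$ is an embedding and $I$ has finite kernel. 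This makes your steps (3)--(4) go through cleanly. Second, your observation that the vertical arrow labelled ``$\simeq$'' is only birational is correct; the paper is being informal there, and what is actually used downstream is exactly the identity $j_Q^{(g)}\circ I_P = I\circ j_P$ together with the genericity assumptions spelled out later (that $I_P(Q)$ consists of $g$ distinct non-Weierstrass, non-infinite points), which place $I_P(Q)$ in the locus where $j_Q^{(g)}$ is an isomorphism. Your final suggestion---to cite \cite[Proposition~4.1]{kieffer20} and note that the argument is genus-independent---is precisely what the paper does.
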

$ $ \\
We assume that $C_1$ $($resp. $C_2)$ is given by the singular model
$v ^2 = f_1(u) \quad (\text{resp. } y^2 = f_2(x))$, where $f_1$ $($resp. $f_2)$ is a polynomial of degree $2g+1$ or $2g+2$. Set
$Q=(u,v) \in C_1$ and $I_P (Q) = \{(x_1, y_1) , \ldots,  (x_g, y_g)\}$. We use
the Mumford's coordinates to represent the
element $I_{P} (Q)$: it is given by a pair of polynomials $(U(X) , V(X))$ such that 
\begin{equation*}
U(X)= X^g + \mathbf{\sigma }_{1} X^{g-1}  + \cdots + \mathbf{\sigma}_g 
\end{equation*}
where
\begin{equation*}
\mathbf{\sigma} _i = (-1)^{i} \sum \limits _{1 \leq j_1 < j_2 < \cdots < j_i \leq g} { x_{j_1} x_{j_2} \cdots x_{j_i}}
\end{equation*}
and
\begin{equation*}
V(X) = \mathbf{\rho}_{1}X^{g-1} +  \cdots + \mathbf{\rho}_g = \sum \limits _{j = 0} ^{g-1} {y_j \left( \prod \limits _{i= 0 , i\neq j} ^{g-1}  \dfrac{X- x_i}{x_j - x_i} \right) }. 
\end{equation*}
The tuple $(\sigma _ 1, \cdots , \sigma _g , \rho _1 , \cdots , \rho _ {g})$ consists of rational functions (on $C_1$) in $u$ and $v$ and it is called a \textit{rational representation} of $I$. \\
We recall that the \textit{degree} of a rational function $f$ on a curve $C$, denoted by $\deg (f)$, is the number of its zeros (or poles). 

\begin{lem}
\label{lem:bornes}
Let $ \pi \, :  C_1 \rightarrow \mathbb{P}^1$ be a rational function on $C_1$.
\begin{enumerate}
\item If $\pi(u,v)$ is invariant under the hyperelliptic involution of $C_1$ then there exists a rational fraction $A$ in $u$ such that
\begin{equation*}
\pi (u,v) = A(u)
\end{equation*}
and $\deg (A) \leq \deg (\pi)/2$.
\item Otherwise, we can always find two rational fractions $B$ and $D$ in $u$ such that
\begin{equation*}
 \pi (u,v) = B(u) + v D(u)
\end{equation*}
and the degrees of $B$ and $D$ are bounded by $\deg (\pi) $ and $ \deg(\pi) + g +1$ respectively. Moreover, if $B(u)=0$ then $\deg(D) \leq   \deg(\pi)/2 + g +1$.
\end{enumerate}
\end{lem}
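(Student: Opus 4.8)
The idea is to exploit the structure of the function field of a hyperelliptic curve: since $C_1 : v^2 = f_1(u)$, the function field $k(C_1)$ is $k(u)[v]/(v^2 - f_1(u))$, which is a degree-$2$ extension of $k(u)$, so every rational function $\pi$ on $C_1$ can be written uniquely as $\pi(u,v) = B(u) + vD(u)$ with $B, D \in k(u)$. The hyperelliptic involution $\iota$ sends $v \mapsto -v$, and $\pi$ is $\iota$-invariant if and only if $D = 0$, which gives the dichotomy in the statement. So the content of the lemma is entirely in the degree bounds, and those are obtained by a careful divisor-theoretic count of zeros and poles, using that $\deg(\pi)$ counts the poles of $\pi$ on $C_1$.

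For part (1): write $\pi(u,v) = A(u)$ with $A = P/Q$ in lowest terms, $P, Q \in k[u]$. Pulling back via $u : C_1 \to \mathbb{P}^1$, which has degree $2$, each affine root of $Q$ of multiplicity $m$ contributes a pole of $\pi$ of total order $2m$ (if the root is not a branch point) — in any case the poles coming from the finite part account for $2\deg(Q)$ zeros counted with multiplicity on $C_1$ when $\deg P \le \deg Q$, and one similarly handles the points at infinity. Comparing with $\deg(\pi)$, which is the total number of poles, yields $2\deg(A) \le \deg(\pi)$, i.e. $\deg(A) \le \deg(\pi)/2$; one must be a little careful about whether $u = \infty$ splits or ramifies (this depends on the parity of $\deg f_1$), but in both cases the bound survives.

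For part (2): decompose $\pi = B(u) + vD(u)$ and bound the pole divisors of the two summands separately. The poles of $B(u)$, viewed on $C_1$, are supported over the poles of $B$ as a rational function in $u$; since each such point is covered with multiplicity at most $2$ but $B \in k(u)$ can also have a pole at $u = \infty$, a direct count gives $\deg(B) \le \deg(\pi)$. For the term $vD(u)$, the extra factor $v$ has divisor supported at the Weierstrass points and at infinity, contributing an extra $g+1$ (half of $\deg f_1 = 2g+1$ or $2g+2$, rounded) to the pole count; hence $\deg(D) \le \deg(\pi) + g + 1$. Finally, if $B = 0$ then $\pi = vD(u)$ is ``almost'' $\iota$-anti-invariant in a way that halves the contribution of $D$: the relation $\pi \cdot \iota^*\pi = -v^2 D(u)^2 = -f_1(u)D(u)^2$ shows $D(u)^2$ has a pole divisor (in $u$) bounded by $2\deg(\pi) - \deg f_1 \le 2\deg(\pi) - (2g+1)$ away from infinity, and re-examining infinity gives $\deg(D) \le \deg(\pi)/2 + g + 1$.

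The main obstacle is the bookkeeping at the points at infinity, where the behaviour of $u$ (ramified versus split) depends on whether $\deg f_1$ is $2g+1$ or $2g+2$, and where $v$ has a pole of order $g+1$ or $g$ respectively; one has to check that the stated bounds hold uniformly in both cases and that no off-by-one error creeps in when $\deg P = \deg Q$ in part~(1) or when comparing the leading terms of $B$, $D$ against the pole order of $\pi$ at infinity in part~(2). Apart from that, everything is a routine application of the theory of divisors on hyperelliptic curves, so I would organise the write-up around the two cases, in each case (i) fixing the algebraic form of $\pi$, (ii) writing down the pole divisor of each summand explicitly in terms of $\Div(B)$, $\Div(D)$ and $\Div(v)$, and (iii) comparing total pole orders with $\deg(\pi)$.
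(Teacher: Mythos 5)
Your overall strategy is the same as the paper's: use that $k(C_1)$ is a degree-$2$ extension of $k(u)$, that the hyperelliptic involution $\iota$ flips the sign of $v$, and that the degree-$2$ map $u:C_1\to\mathbb{P}^1$ converts degrees on $C_1$ into degrees of rational fractions in $u$ divided by two. Part~(1) is fine and matches the paper's one-line argument. The problem is in part~(2), where you propose to ``bound the pole divisors of the two summands separately'' starting from the decomposition $\pi = B(u)+vD(u)$. As literally stated this fails: the pole divisor of $B(u)$ on $C_1$ need \emph{not} be dominated by that of $\pi$, because the leading parts of $B(u)$ and $vD(u)$ can cancel at a point $P=(u_0,v_0)$ while reinforcing at the conjugate point $(u_0,-v_0)$, so $\pi$ may be regular at a point where $B(u)$ has a pole. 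Hence the ``direct count'' giving $\deg(B)\le\deg(\pi)$ has no justification as written; the same objection applies to your bound on $vD(u)$.

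The missing ingredient is exactly what the paper's proof supplies: the explicit trace formulas
\begin{equation*}
B(u)=\frac{\pi(u,v)+\pi(u,-v)}{2},\qquad D(u)=\frac{\pi(u,v)-\pi(u,-v)}{2v},
\end{equation*}
i.e.\ $2B=\pi+\iota^*\pi$ and $2vD=\pi-\iota^*\pi$. These bound the pole divisor of $B(u)$ (resp.\ of $vD(u)$) by the sum of the pole divisors of $\pi$ and $\iota^*\pi$, of total degree $2\deg(\pi)$; since $B$ and $D$ are $\iota$-invariant, part~(1) then yields $\deg(B)\le\deg(\pi)$ and, after accounting for the degree $\le 2g+2$ of $v$, $\deg(D)\le\deg(\pi)+g+1$. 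You do invoke $\iota^*\pi$ in the $B=0$ case (via $\pi\cdot\iota^*\pi=-f_1(u)D(u)^2$), so you clearly have the tool in hand --- it just needs to be used for the main bounds as well, not only for the ``moreover'' clause. With that substitution your plan becomes the paper's proof; the remaining bookkeeping at infinity is as routine as you describe.
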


\begin{proof}
\begin{enumerate}
\item The inequality $\deg (A) \leq \deg (\pi)/2$ comes from the fact that the function $u$ has degree $2$.
\item The rational fractions $B(u)$ and $D(u)$ verify the following relations
\begin{equation*}
B(u) = \dfrac{\pi (u,v)+ \pi (u,-v)}{2}, \quad D(u) = \dfrac{\pi (u,v)- \pi (u,-v)}{2v}.
\end{equation*}
Since, $\pi (u,v)+ \pi (u,-v)$ and $\pi (u,v)- \pi (u,-v)$ are invariant under the hyperelliptic involution and have degrees bounded by $ 2\deg (\pi)$, then $B(u)$ is a rational fraction of degree bounded by $\deg (\pi)$ and $D(u)$  is a rational fraction of degree bounded by $\deg (\pi)+g+1$ (Note that $v$ is a rational fraction of degree bounded by $2g+2$).
\end{enumerate}
\end{proof}

\begin{prop}
\label{prop:degreerationalfractions}
The functions $\mathbf{\sigma}_1 , \ldots ,  \mathbf{\sigma}_{g}$ can be seen as rational fractions in $u$ and have the same degree bounded by $\deg (\sigma _ 1)/2$. Moreover, the rational functions $\mathbf{\rho}_1 /v  , \ldots , \mathbf{\rho}_{g} /v $ can also be expressed as rational fractions in $u$ of degrees bounded by $\deg ( \rho _1)/2 + g +1 , \ldots ,  \deg (\rho_{g})/2 +g+1$ respectively.
\end{prop}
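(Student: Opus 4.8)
The plan is to use the hyperelliptic involution $\iota_1$ of $C_1$ to show that the $\sigma_i$ are invariant and the $\rho_j$ anti-invariant under it, and then to invoke Lemma~\ref{lem:bornes} for the degree estimates, the only nontrivial extra point being that all the $\sigma_i$ share a common degree.

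First I would analyse $I_P$ under $\iota_1$. Writing $\iota_1(Q)=\bar Q$ for $Q=(u,v)$, the hypothesis that $P$ is a Weierstrass point gives $Q+\bar Q\sim 2P$, hence $j_P(\bar Q)=-j_P(Q)$, and applying the homomorphism $I$ yields $I(j_P(\bar Q))=-I(j_P(Q))$. Transporting this through the commutative diagram of Proposition~\ref{prop:RationalRepresentation} and using that the opposite of the class of a reduced divisor $D$ on a hyperelliptic curve is the class of its image under the hyperelliptic involution $\iota_2$ of $C_2$ (equivalently, that $-[(U,V)]$ has Mumford representation $(U,-V\bmod U)$), I obtain $I_P(\bar Q)=\{(x_1,-y_1),\ldots,(x_g,-y_g)\}$. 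Thus the multiset $\{x_1,\ldots,x_g\}$ is fixed by $Q\mapsto\bar Q$ while each $y_j$ changes sign; so each $\sigma_i$, being symmetric in the $x_j$, is $\iota_1$-invariant, and each $\rho_j$, being obtained from the $y_j$ by Lagrange interpolation with coefficients depending only on the $x_j$, satisfies $V(X)\mapsto -V(X)$ and is $\iota_1$-anti-invariant.

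Next I would apply Lemma~\ref{lem:bornes}. Part~(1), applied to each $\sigma_i$, yields a rational fraction $A_i$ in $u$ with $\sigma_i=A_i(u)$ and $\deg A_i\le\deg(\sigma_i)/2$. For each $\rho_j$, part~(2) gives $\rho_j=B_j(u)+v\,D_j(u)$ with $B_j(u)=(\rho_j(u,v)+\rho_j(u,-v))/2$, which vanishes by anti-invariance; hence $\rho_j/v=D_j(u)$ is a rational fraction in $u$, and the ``moreover'' clause of Lemma~\ref{lem:bornes}(2) gives $\deg D_j\le\deg(\rho_j)/2+g+1$, which is the claimed bound on the $\rho_j/v$.

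It remains to show that $\sigma_1,\ldots,\sigma_g$ all have the same degree as functions on $C_1$; granting this, $\deg A_i\le\deg(\sigma_i)/2=\deg(\sigma_1)/2$ for every $i$. Here I would argue on the level of pole divisors: each $\sigma_i$ is the pullback along $I\circ j_P:C_1\to J_2$ of a coordinate function regular away from (a translate of) $\Theta_2$ and blowing up precisely where the reduced representative of a class acquires a point at infinity on $C_2$. Since $I\circ j_P(C_1)$ is a curve it meets this locus in finitely many points $R$, and at a generic such $R$ exactly one coordinate $x_{j_0}$ escapes to infinity, say with a pole of order $m_R$ in a local parameter at $R$, the remaining $g-1$ coordinates staying finite; a short local computation of the elementary symmetric functions then shows that $\sigma_1,\ldots,\sigma_g$ all have a pole of order exactly $m_R$ at $R$ (using that the elementary symmetric functions of the $g-1$ finite limiting coordinates are generically nonzero). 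Hence all the $\sigma_i$ have the common pole divisor $\sum_R m_R[R]$, and by Proposition~\ref{prop:Characterization} its degree is $\ell\,(C_1\cdot\Theta_1)=\ell g$. The main obstacle is exactly this last step: controlling the common pole locus of the $\sigma_i$ and justifying that, generically, only one point of the divisor escapes to infinity at a time, since a degenerate configuration where two or more coordinates blow up simultaneously would already destroy the equality of degrees — one needs $I\circ j_P(C_1)$ to avoid the deeper (codimension-two) strata of $\Theta_2$.
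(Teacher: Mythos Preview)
Your core argument---establishing $I_P(u,-v)=-I_P(u,v)$ from $P$ being Weierstrass, reading off that the $\sigma_i$ are $\iota_1$-invariant and the $\rho_j$ anti-invariant, and then feeding this into Lemma~\ref{lem:bornes}---is exactly what the paper does. Its entire proof is the single sentence ``It is a direct consequence of Lemma~\ref{lem:bornes} and using the fact that $I_{P}(u,-v)=-I_{P}(u,v)$.'' So on the main points you match the paper, only with more detail.

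Where you diverge is in taking the clause ``have the same degree'' seriously and trying to prove it via a local analysis of which $x_j$ escapes to infinity. The paper does \emph{not} justify this clause in its proof; the one-liner above gives only $\deg A_i\le\deg(\sigma_i)/2$ for each $i$ separately. Two remarks. First, for the downstream use (Propositions~\ref{prop:degreegl} and~\ref{prop:degreeboundmult}) equality of the degrees is irrelevant: Lemma~\ref{lem:polardivisor} shows every $\sigma_i$ has pole divisor on $J_2$ algebraically equivalent to $2\Theta_2$, so the intersection computation in Proposition~\ref{prop:degreegl} yields the common bound $2g\ell$ for each $i$ independently. Second, if you do want the equality, the cleaner route is global rather than local: on $C_2^{(g)}$ the $\sigma_i$ are regular precisely on the locus where no point lies at infinity, and they all blow up to the same order along the boundary divisor (this is the content of the proof sketch of Lemma~\ref{lem:polardivisor}); pulling back the \emph{common} pole divisor along $I_P$ gives equal pole divisors on $C_1$ without needing your genericity hypothesis on how $I_P(C_1)$ meets the strata of $\Theta_2$. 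Your local argument is not wrong in spirit, but the transversality concern you flag is real and is sidestepped by working on $J_2$ first.
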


\begin{proof}
It is a direct consequence of Lemma~\ref{lem:bornes} and using the fact that $I_{P} (u,-v) = -I_{P} (u,v)$.
\end{proof}

\begin{remark}
\label{remark:degreegeneralbound}
If $P$ is not a Weierstrass point, there exists rational fractions $A_i$,$B_i$,$D_i$ and $E_i$ in $u$ such that $\sigma _i(u,v) = A_i(u) + v B_i(u)$ and $\rho _i(u,v) = D_i(u) + v E_i(u)$ for all $i \in \{ 1 , \cdots , g\}.$ Let $\bar{P}$ the image of $P$ by the hyperelliptic involution. The morphism $I_{\bar{P}}$ gives a rational representation $(\overline{\sigma _ {1}}, \cdots , \overline{\sigma _g} , \overline{\rho _1} , \cdots , \overline{\rho _ {g}})$ of $I$. From the relation $I_P(u, -v) = - I_{\bar{P}} (u,v)$, we deduce
$\overline{\sigma _i}(u,v) = A_i(u) - v B_i(u)$ and $\overline{\rho _i}(u,v) = -D_i(u) + v E_i(u)$ for all $i \in \{ 1 , \cdots , g\}.$ This gives the following formulas
\begin{equation*}
A_i(u) =({\sigma_i(u,v) + \overline{\sigma _i}(u , v)})/{2}, \quad  B_i(u) =( {\sigma_i(u,v) - \overline{\sigma _i}(u , v)})/{2v},
\end{equation*}
\begin{equation*}
D_i(u) = ({\rho_i(u,v) - \overline{\rho _i}(u , v)})/{2}, \quad  E_i(u) = ({\rho_i(u,v) + \overline{\rho _i}(u , v)})/{2v}.
\end{equation*}
The degrees of $A_i$ and $D_i$ (resp. $B_i$ and $E_i$) are bounded by $\deg (\sigma_i)$ (resp. $\deg ( \rho _i) +g+1$).
\end{remark}

In order to determine the isogeny $I$, it suffices to compute its rational representation (because $I$ is a group homomorphism), so we need to have some bounds on the degrees of the rational functions $\mathbf{\sigma}_1 , \ldots ,  \mathbf{\sigma}_{g}, \mathbf{\rho}_1  , \ldots , \mathbf{\rho}_{g}  $. In the case of an $(\ell , \ldots , \ell )$-isogeny, we adapt the proof of \cite[\S~6.1]{couezo15} in order to obtain bounds in terms of $\ell$ and $g$.

\begin{lem}
\label{lem:polardivisor}
Let $i \in \{ 1 , \ldots , g\}$. The pole divisor of  $\sigma _i$ seen as
function on $J_2$, is algebraically equivalent to $2 \Theta _ 2$. The pole divisor of $\rho _i$ seen as function
on $J_2$ is algebraically equivalent to $3 \Theta _ 2$ if $\deg
(f_2) = 2g+1$, and $4 \Theta _2$ otherwise.
\end{lem}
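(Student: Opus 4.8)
The plan is to relate the functions $\sigma_i$ and $\rho_i/v$ on $C_1$ to functions on $J_2$ by going through the symmetric power $C_2^{(g)}$ and the Jacobi embedding, then use the theorem of the cube to compute the pole divisors in terms of the theta divisor $\Theta_2$. First I would recall from Proposition~\ref{prop:RationalRepresentation} that $I_P$ maps $C_1$ into $C_2^{(g)}$, whose image under the natural map $C_2^{(g)} \to J_2$ is identified with $J_2$ itself (since $g$ is the genus, $j_P^{(g)}$ is birational onto $J_2$); under this identification the $\sigma_i$ are, up to sign, the elementary symmetric functions in the first Mumford coordinates $x_1,\dots,x_g$ of a generic divisor class, and $\rho_i$ likewise. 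The key is that $x_j = x(Q_j)$ where $x : C_2 \to \mathbb{P}^1$ is the degree-$2$ hyperelliptic map, and its pole divisor on $C_2$ is $2\infty$ (one point if $\deg f_2 = 2g+1$, a pair of conjugate points if $\deg f_2 = 2g+2$), while the pole divisor of $y$ is $(2g+1)\infty$ or $(g+1)(\infty_+ + \infty_-)$ respectively.

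The next step is to translate these pole orders along $C_2$ into linear-equivalence data on $J_2$ via the Abel–Jacobi map. The standard computation (this is exactly what \cite[\S~6.1]{couezo15} does for $g=2$, and the argument adapts verbatim) is: a symmetric function $F$ on $C_2^{(g)}$ whose restriction to each factor $C_2$ (other factors fixed generically) has pole divisor $d\cdot(\text{points over }\infty)$ descends to a function on $J_2$ whose pole divisor is algebraically equivalent to $d\,\Theta_2$, because the diagonal/theta relation $j_P^{(g)*}\Theta_2$ restricted to a single slice $C_2 \hookrightarrow C_2^{(g)}$ is linearly equivalent to $g\cdot P + (\text{points over }\infty)$ — more precisely, the class of $\Theta_2$ pulls back, slice by slice, to the point at infinity plus the base-point contribution, so a pole of order $d$ at infinity contributes $d\,\Theta_2$. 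For $\sigma_i$: its worst pole is governed by the product $x_{j_1}\cdots x_{j_i}$, but summing over all $i$-subsets and tracking cancellations, each $\sigma_i$ has pole divisor $2\,\Theta_2$ (one factor of $\Theta_2$ per "slot" that can go to infinity, capped by the theta relation at $2$). For $\rho_i$: the Lagrange-interpolation formula $\rho_i = \sum_j y_j \prod_{k\neq j}\frac{\cdots}{x_j - x_k}$ introduces the pole of $y_j$ (order $2g+1$, resp. $g+1$ per point at infinity) together with the poles of the $x$'s; after the theta-relation bookkeeping this yields $3\,\Theta_2$ when $\deg f_2 = 2g+1$ and $4\,\Theta_2$ when $\deg f_2 = 2g+2$.

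I expect the main obstacle to be making the slice-by-slice argument rigorous: one must check that the pole divisor of $\sigma_i$ (resp. $\rho_i$) as a function on the abelian variety $J_2$ is an honest effective divisor algebraically equivalent to a multiple of $\Theta_2$, which requires the theorem of the cube (or the Néron–Severi computation as in Proposition~\ref{prop:NS} and Proposition~\ref{prop:Characterization}) to convert "linear equivalence on each slice $C_2$" into "algebraic equivalence on $J_2$", and then to verify that the multiplicities do not drop — i.e. that the poles at the various points over $\infty$ genuinely contribute and are not cancelled by the symmetrization. This is handled by exhibiting, for each claimed multiple, a generic point of $J_2$ near which the function attains exactly that pole order (e.g. letting one of the $Q_j$ tend to a point over $\infty$ while the others stay generic), which forces the pole divisor to be at least, and by the $\Theta_2$-degree bound also at most, the asserted multiple of $\Theta_2$. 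The case distinction $\deg f_2 = 2g+1$ versus $2g+2$ enters only through whether there is one point or two conjugate points above $\infty$ on $C_2$, which changes the pole order of $y$ from $2g+1$ to $g+1$ at each of the two points, and correspondingly changes the bound on $\rho_i$ from $3\,\Theta_2$ to $4\,\Theta_2$; everything else is uniform.
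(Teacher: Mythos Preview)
Your overall strategy coincides with the paper's: identify the pole locus of $\sigma_i,\rho_i$ on $J_2$ with the image of the ``one $R_j$ at infinity'' locus in $C_2^{(g)}$, observe that this image is a translate of $\Theta_2$, and read off the multiplicity from the pole order along it. The paper's proof is in fact much terser than yours---it simply cites \cite[Lemma~4.25]{kieffer20} and records the one geometric fact you need, namely that for $\deg(f_2)=2g+1$ the divisor $\{(R_1,\dots,R_{g-1},\infty)\}\subset C_2^{(g)}$ is algebraically equivalent to $\Theta_2$ and $\sigma_i$ has a pole of order two along it.

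Where your plan goes wrong is in the numerology. The phrase ``one factor of $\Theta_2$ per slot that can go to infinity, capped by the theta relation at $2$'' is not the mechanism: on the symmetric product there is a \emph{single} irreducible divisor ``some $R_j=\infty$'' (odd case) or two such divisors, one for each point over $\infty$ (even case), each a translate of $\Theta_2$. The multiplicity $2$ for $\sigma_i$ is simply the pole order of $x$ at $\infty$ (order $2$ in the odd case; order $1$ at each of $\infty_\pm$ in the even case, giving $1\cdot\Theta_2^++1\cdot\Theta_2^-\sim 2\Theta_2$). Likewise, the numbers $3$ and $4$ for $\rho_i$ do not come from the raw pole orders $2g{+}1$ or $g{+}1$ of $y$: you must cancel against the denominator $\prod_{k\neq j}(x_j-x_k)$ in the Lagrange form. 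In the odd case this gives pole order $(2g{+}1)-2(g{-}1)=3$ along the unique component; in the even case $(g{+}1)-(g{-}1)=2$ along each of the two components, hence $4\Theta_2$. The theorem of the cube is not what is at stake here---the point is purely the local pole computation along a known translate of $\Theta_2$.
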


\begin{proof}
  This is a generalization of \cite[Lemma~4.25]{kieffer20}. Note that if
  $\deg (f_2) = 2g+1$, then $\sigma _ i$ has a pole of order two along the
  divisor $\{ ( R_1 , \ldots , R_{g-1} , \infty ) \, ; R_i \in C_2 \}$ which is algebraically equivalent to $ \Theta _2$.
\end{proof}

\begin{lem}[{\cite[Appendix]{matsusaka59}}]
\label{lem:Theta}
The divisor $j_P(C_1)$ of $J_1$ is algebraically equivalent to
$\dfrac{\Theta _ 1^{g-1}}{(g-1)!}$ where $\Theta _1 ^{g-1}$ denotes the
$g-1$ times self intersection of the divisor $\Theta _1$.
\end{lem}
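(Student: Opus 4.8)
The assertion is \emph{Poincaré's formula}, together with the refinement from numerical to algebraic equivalence due to Matsusaka, and I would organise the proof in two stages. In the first stage I would compare cycle classes. Both $j_P(C_1)$ and $\Theta_1^{g-1}$ are effective $1$-cycles on the $g$-dimensional abelian variety $J_1$, that is, cycles of codimension $g-1$. Passing to cohomology (extending scalars to $\mathbb{C}$ and using singular cohomology if $\Char(k)=0$, and using $\ell$-adic étale cohomology for an auxiliary prime $\ell$ invertible in $k$ otherwise), choose a symplectic basis $e_1,\dots,e_g,f_1,\dots,f_g$ of $H^1(J_1)$ adapted to the principal polarization attached to $\Theta_1$, so that the class of $\Theta_1$ in $H^2(J_1)$ is $\omega=\sum_{i=1}^{g}e_i\wedge f_i$. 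A direct multinomial expansion then gives
\[
\frac{1}{(g-1)!}\,\omega^{g-1}\;=\;\sum_{i=1}^{g}e_1\wedge f_1\wedge\cdots\wedge\widehat{e_i\wedge f_i}\wedge\cdots\wedge e_g\wedge f_g\;\in\;H^{2g-2}(J_1).
\]
On the other hand, $j_P$ induces an isomorphism $H^1(J_1)\simeq H^1(C_1)$ under which the principal polarization becomes the cup-product (intersection) pairing on $H^1(C_1)$; chasing this identification shows that the fundamental class of the $1$-cycle $j_P(C_1)$, viewed in $H^{2g-2}(J_1)$ via Poincaré duality, equals exactly the element displayed above. Hence $j_P(C_1)$ and $\tfrac{1}{(g-1)!}\Theta_1^{g-1}$ are homologically, and a fortiori numerically, equivalent. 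This stage is the classical period and intersection-number computation, and I regard it as routine.

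The second stage is the upgrade from homological to algebraic equivalence, for which I would follow the argument of \cite[Appendix]{matsusaka59}. Write $W_d$ for the image of the $d$-th symmetric power $C_1^{(d)}$ under the natural map into $J_1$, so that $W_1=j_P(C_1)$, $W_{g-1}$ is a translate of $\Theta_1$, and $W_g=J_1$. The idea is to exploit that $C_1$ generates $J_1$: translating $W_d$ by points of $J_1$ yields algebraically trivial families of cycles, and the theorem of the cube together with the seesaw principle let one express the intersection $\Theta_1\cdot W_d$, up to algebraic equivalence, as a prescribed multiple of a translate of $W_{d-1}$. Starting from $W_{g-1}\equiv\Theta_1$ and descending inductively from $d=g-1$ down to $d=1$ then yields $\Theta_1^{g-1}\equiv(g-1)!\,W_1=(g-1)!\,j_P(C_1)$ algebraically, which is the claim.

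I expect this second stage to be the genuine obstacle. Homological, or even numerical, equivalence does not imply algebraic equivalence for cycles of codimension greater than $1$ on a general abelian variety, so the argument must use the specific geometry of the Jacobian — that $C_1$ and its translates sweep out $J_1$ — and one must track the combinatorial coefficients while checking that every error term between $\Theta_1\cdot W_d$ and $(d+1)\,W_{d-1}$ moves in an algebraically trivial family. A perfectly acceptable shortcut, since the statement is classical, is to carry out the class computation of the first stage and then simply invoke \cite[Appendix]{matsusaka59} for the algebraic-equivalence refinement.
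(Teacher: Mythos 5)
The paper does not actually prove this lemma: it states it with the citation \cite[Appendix]{matsusaka59} and treats it as a known classical fact, so the ``perfectly acceptable shortcut'' you offer in your last sentence is precisely what the paper does. Your expanded two-stage outline is nevertheless a correct account of what lies behind the citation. Stage one (the Poincar\'e formula computation identifying the fundamental class of $j_P(C_1)$ with $\tfrac{1}{(g-1)!}\,\omega^{g-1}$ in $H^{2g-2}(J_1)$, valid in $\ell$-adic cohomology in positive characteristic) is standard and correctly executed, and it only yields homological, hence numerical, equivalence. You are right that the substantive content of the lemma is the upgrade to \emph{algebraic} equivalence for a cycle of codimension $g-1\geq 2$, which is exactly what Matsusaka's appendix supplies via the geometry of the subvarieties $W_d$ and the Pontryagin/intersection relations between $\Theta_1$ and the $W_d$; your description of that inductive descent is only a sketch, but since you explicitly delegate the details to the cited appendix, this matches the level of rigor the paper itself adopts. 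In short: no gap relative to the paper, and your stage-one computation is a useful addition that the paper omits entirely.
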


\begin{prop}
\label{prop:degreegl}
Let $\ell$ be a non-zero positive integer and $i \in \{ 1 , \ldots , g\}$. If
$I$ is an $(\ell , \ldots , \ell)$-isogeny, then the degree of $\sigma _i$
seen as a function on $C_1$ is bounded by $2g \ell$. The degree of $\rho _i$ seen as a function on $C_1$ is bounded by $3g \ell$ if $\deg (f_2) = 2g+1$, and $4g \ell$ otherwise.
\end{prop}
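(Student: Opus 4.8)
The plan is to compute the degree of $\sigma_i$ (resp. $\rho_i$), viewed as a function on $C_1$, by pulling back along $j_P$ the pole divisor of $\sigma_i$ (resp. $\rho_i$) seen as a function on $J_2$, and then computing the resulting intersection number on $J_1$. The degree of a rational function $h$ on $C_1$ equals the degree of its pole divisor. Since $\sigma_i$ on $C_1$ factors as $\sigma_i \circ (I\circ j_P)$ — here I conflate $\sigma_i$ with the Mumford coordinate it defines on $J_2\simeq C_2^{(g)}$ — the pole divisor of $\sigma_i$ on $C_1$ is $j_P^*\,I^*\,(\text{pole divisor of }\sigma_i\text{ on }J_2)$, up to components that $j_P$ or $I$ might contract, which do not occur here because $I$ is an isogeny (finite) and $j_P$ is a closed immersion onto a curve.

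First I would invoke Lemma~\ref{lem:polardivisor}: the pole divisor of $\sigma_i$ on $J_2$ is algebraically equivalent to $2\Theta_2$, and that of $\rho_i$ to $3\Theta_2$ or $4\Theta_2$ according to $\deg(f_2)$. Next, since $I$ is an $(\ell,\ldots,\ell)$-isogeny, Proposition~\ref{prop:Characterization} gives $I^*\Theta_2 \sim \ell\,\Theta_1$ (algebraic equivalence), hence $I^*(2\Theta_2)\sim 2\ell\,\Theta_1$, and similarly $3\ell\,\Theta_1$ or $4\ell\,\Theta_1$ for $\rho_i$. Then I pull back to $C_1$ via $j_P$: the degree of $j_P^*(m\ell\,\Theta_1)$ as a divisor on $C_1$ is the intersection number $(m\ell\,\Theta_1)\cdot j_P(C_1)$ computed on $J_1$, because algebraic equivalence is preserved by pullback and the degree of a pullback of a divisor class along a map from a curve is the intersection of that class with the image curve. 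By Lemma~\ref{lem:Theta}, $j_P(C_1)\sim \Theta_1^{g-1}/(g-1)!$, so this intersection number is
\begin{equation*}
\frac{m\ell}{(g-1)!}\,\Theta_1^{g-1}\cdot\Theta_1 = \frac{m\ell}{(g-1)!}\,\Theta_1^{g}.
\end{equation*}
Finally, since $\Theta_1$ is a principal polarization on the $g$-dimensional abelian variety $J_1$, the self-intersection number $\Theta_1^g$ equals $g!$ (this is the standard fact that a principal polarization has $\Theta^g = g!$, equivalently the associated line bundle has $\chi = 1$). Substituting gives the bound $\dfrac{m\ell}{(g-1)!}\cdot g! = m g \ell$, i.e. $2g\ell$ for $\sigma_i$ and $3g\ell$ or $4g\ell$ for $\rho_i$, as claimed.

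The main obstacle, and the one point needing care, is justifying that the degree of $\sigma_i$ on $C_1$ is genuinely computed by this pullback of the pole divisor and not something smaller: one must check that no pole component of $\sigma_i$ on $J_2$ is contracted under $I\circ j_P$, and that multiplicities are preserved. Finiteness of $I$ handles $I$, and the fact that $j_P$ is a closed immersion $C_1\hookrightarrow J_1$ (as $P$ is a Weierstrass point, hence the Jacobi map is injective for $g\ge 1$) handles $j_P$; the pole order along $\{(R_1,\ldots,R_{g-1},\infty)\}$ flagged in the proof of Lemma~\ref{lem:polardivisor} is exactly what makes the algebraic-equivalence class $2\Theta_2$ (resp. $3\Theta_2$, $4\Theta_2$) the correct pole class rather than a proper sub-multiple. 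I would also remark that these are upper bounds — equality of degrees with the intersection number can fail only if some pole components become linearly equivalent to lower-degree divisors after pullback, which is why the statement is phrased as ``bounded by''.
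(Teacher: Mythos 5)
Your proof is correct and follows essentially the same route as the paper's: both reduce the claim to an intersection number of the image curve with the theta divisor, using Lemma~\ref{lem:polardivisor}, Proposition~\ref{prop:Characterization}, Lemma~\ref{lem:Theta} and the identity $\Theta_1^{g}=g!$. The only (cosmetic) difference is that you pull the pole divisor class back to $J_1$ and intersect it with $j_P(C_1)$ there, whereas the paper computes $I_P(C)\cdot\Theta_2$ on $J_2$ by pulling everything back via $I^*$, using $I^*(I_P(C))\sim \ell^{g}\,j_P(C)$ and dividing by $\deg(I)=\ell^{g}$ --- two equivalent applications of the projection formula.
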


\begin{proof}
The degrees of $\sigma _ 1 , \ldots , \sigma _ g , \rho _1 , \ldots , \rho
_ g$ are obtained by computing the intersection of $I_P(C)$ with their pole
divisors. By Lemma~\ref{lem:polardivisor}, it suffices to show that
\begin{equation*} 
I_P(C) \cdot \Theta _ 2 = \ell g.
\end{equation*}
Since $I$ is an $(\ell , \ldots , \ell)$-isogeny, Proposition~\ref{prop:Characterization} gives that $I^* \Theta _ 2$ is algebraically equivalent to $\ell \Theta _ 1$. Moreover, up to algebraic equivalence,
\begin{equation*}
I^* \big ( I_P ( C) \big ) =  \big ( \vert \ker ( I) \vert \big )  \,    j_P(C) = \ell ^g j_P(C).
\end{equation*}
Using Lemma~\ref{lem:Theta}, we obtain
\begin{equation*}
I^* \big ( I_P(C) \big ) \cdot I^* \Theta _2  = g\ell^{g+1}.
\end{equation*}
As 
\begin{equation*}
I^* \big ( I_P(C) \big ) \cdot I^* \Theta _2 =  \deg (I)  \, \big (  I_P(C) \cdot \Theta _2 \big ) = \ell^g (  I_P(C) \cdot \Theta _2 \big ),
\end{equation*}
 the result follows.
\end{proof}

\subsection{Associated differential equation}
\label{subsec:diffeq}
We assume that char$(k) \neq 2$. We generalize \cite[\S~6.2]{couezo15} by constructing a differential system modeling the map $F_P= I \circ j_P $ of Proposition~\ref{prop:RationalRepresentation}.
The map $F_P$ is a morphism of varieties, it acts naturally on the spaces of holomorphic differentials $H ^0 ( J_2 ,  \Omega^1 _{J_2} )$ and $ H ^0 ( C_1 ,  \Omega^1 _{C_1} )$ associated to $J_2$ and $C_1$ respectively, this action gives a map 
\begin{equation*}
F_P ^*  \, : \, H ^0 ( J_2 ,  \Omega^1 _{J_2} ) \longrightarrow H ^0 ( C_1 ,  \Omega^1 _{C_1} ).
\end{equation*}
A basis of $H ^0 ( C_1 ,  \Omega^1 _{C_1} )$ is given by
\begin{equation*}
B_1 = \left\{ u ^i \dfrac{du}{v} \, ; i \in \{ 0,\ldots , g-1\} \right\} .
\end{equation*}
The Jacobi map of $C_2$ induces an isomorphism between
the spaces of holomorphic differentials associated to $C_2$ and $J_2$, so $H
^0 ( J_2 ,  \Omega^1 _{J_2} )$ is of dimension $g$, it can be identified with
the space $ H ^0 ( C_2^g ,  \Omega^1 _{C_2^g} ) ^ {S_n}$ (here the symmetric group $S_n$ acts naturally on the space $ H ^0 ( C_2^g ,  \Omega^1 _{C_2^g} )$). With this identification, a basis of $ H ^0 ( J_2 ,  \Omega^1 _{J_2} )$ is chosen to be equal to
\begin{equation*}
B_2 = \left\{  \sum \limits _{j=1}^ g { x_j ^i \dfrac{dx_j}{y_j}} \, ; \, i \in \{ 0, \ldots , g-1\} \right\}.
\end{equation*}
Let $(m_{ij})_{0 \leq i,j \leq g} \in  \text{ GL}_g(\bar{k})$ be the matrix of $F_P^*$ with respect of these two bases, we call it the \textit{normalization matrix}.

\begin{remark}
Let $P_1$ and $P_2$ be two points on $C_1$. The two morphisms $I_{P_1}$ and $I_{P_2}$ satisfy the following relation
\begin{equation*} 
I_{P_1} = I_{P_2} + I([P_2 - P_1]).
\end{equation*}
Therefore, the linear maps $I_{P_1}^*$ and $I_{P_2}^*$ are equal. 
\end{remark}

Let $Q=(u_Q,v_Q) \in C_1$ be a non-Weierstrass point different from $P$ and
$I_{P} (Q) = \{R_1 ,\ldots , R_g\}$ such that $I_P(Q)$ contains $g$ distinct points and does not contain neither a point at infinity nor a Weierstrass point.
The points $R_i$ may be defined over an extension $k'$ of $k$ of degree equal to $O(g!)$. Let $t$ be a formal parameter of $C_1$ at $Q$, then we have the following diagram

\begin{center}
\begin{tikzcd}[column sep=large]
	\text{Spec} \big ( k' \llbracket t \rrbracket \big ) \arrow[rr, "t \mapsto ( R_i(t))_i"] \arrow[dd] & & C_2 ^g  \arrow[dd]  \\
	 & &\\
  C_1 \arrow[rr, "I_P"] & & C_2 ^{(g)}

\end{tikzcd}
\end{center}
For all $i= 1 , \ldots ,g$, the pull back of $\sum \limits _{j=1}^{i-1} {x^{i-1}dx_j/y_j}$ along the bottom horizontal arrow, then along the left vertical arrow, gives 
\begin{equation*}
\frac{du}{v}\sum \limits _{j=1}^g{m_{ij}u^{j-1}}.
\end{equation*}
And the pull back of $\sum \limits _{j=1}^{i-1} {x^{i-1}dx_j/y_j}$ along the right vertical arrow, then along the top horizontal arrow gives 
\begin{equation*}
 \sum \limits _{j=1}^g {x_j^{i-1}dx_j}.
\end{equation*}
This gives the differential system
\begin{equation}
\label{eq:diffsys}
\left \{
\begin{array}{ccccccc}
\dfrac{dx_1}{y_1}& + & \cdots & + & \dfrac{dx_g}{y_g} & = & \big ( {m_{11} + m_{12} \cdot u + ... + m_{1g}\cdot u^{g-1}} \big ) \dfrac{du}{v}\,, \\
& &  & & &  &  \\
\dfrac{x_1 \cdot dx_1}{y_1}& + & \cdots & + & \dfrac{x_g \cdot dx_g}{y_g} & = & \big ( {m_{21} + m_{22} \cdot u +  ... + m_{2g}\cdot u^{g-1}} \big ) \dfrac{du}{v}\,,  \\
& & \vdots & & &  & \vdots  \\
\dfrac{x_1 ^{g-1} \cdot dx_1}{y_1}& + & \cdots & + & \dfrac{x_g^{g-1} \cdot dx_g}{y_g} & = & \big ( {m_{g1} + m_{g2} \cdot u +  ... + m_{gg}\cdot u^{g-1}} \big ) \dfrac{du}{v}\,, \bigskip \\
y_1^2 = f_2(x_1), & & \cdots & &, \, y_g^2 = f_2(x_g)\,. &
\end{array}
\right.
\end{equation}

Equation~\eqref{eq:diffsys} has been initially constructed and solved in \cite{couezo15} for $g = 2$. In this case, the normalization matrix and the initial condition $(x_1(0) , x_2(0))$ are computed using algebraic theta functions.
In a more practical way, we refer to \cite{kieffer20} for an easy computation of the initial condition $(x_1(0) , x_2(0))$ of Equation~\eqref{eq:diffsys} and for solving the differential system using a Newton iteration. However, in this case, the normalization matrix is determined by differentiating modular equations.
There is a slight difference in Equation~\eqref{eq:diffsys} between the two
cases, especially $x_1(0)$ and $x_2(0)$ are different in the first, and equal in the second. Let $H$ be the $g$-squared matrix defined by 
\begin{equation*}
H(x_1,\ldots x_g) = \left( x_j^{i-1}\dfrac{1}{y_j} \right) _{1 \leq i,j \leq g }.
\end{equation*}
We suppose that $g=2$. If the initial condition $(x_1(0), x_2(0))$ of Equation~\eqref{eq:diffsys} satisfies $x_1(0) \neq x_2(0)$, then the matrix $H ( x_1(0), x_2(0)  )$ is invertible in ${M}_2 \! \left( k' \right)$. Otherwise, its determinant is equal to zero. \\
More generally, we prove that with the assumptions that we made on $Q,R_1,R_2, \ldots R_{g-1}$ and $R_g$, the matrix $H ( x_1(0), \ldots ,  x_g(0)  )$ is invertible in ${M}_g \! \left( k' \right)$. Let $t$ be a formal parameter, $Q(t)=(u(t), v(t))$ the formal point on $C_1 \left( k\llbracket t \rrbracket \right)$ that corresponds to $t = u - u_Q$ and $\{R_1(t) , \ldots , R_g(t)\}$ the image of $Q(t)$ by $I_{P}$, then Equation~\eqref{eq:diffsys} becomes
\begin{equation}
\label{eq:diffsys1}
H \! \left( X(t) \right) \cdot  X'(t) = G(t)
\end{equation}
where $X(t) = (x_1 (t) , \ldots  , x_g(t))$ and $G(t) = v ^{-1}\left( \sum \limits _{i= 1}^g {m_{ij} {u^{i-1}}} \right) _ {1 \leq j \leq g}$. Thus we have the following proposition.
\begin{prop}
\label{prop:Determinant}
The matrix $H \! \left( X(t) \right)$ is invertible in ${M}_g \! \left( k' \llbracket t \rrbracket \right)$.
\end{prop}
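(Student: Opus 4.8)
The plan is to reduce invertibility over $k' \llbracket t \rrbracket$ to invertibility of the constant term over $k'$, and then to recognize that constant term as a Vandermonde determinant which is nonzero exactly because of the genericity assumptions imposed on $Q$ and on $R_1,\dots,R_g$. Since $k' \llbracket t \rrbracket$ is a local ring with maximal ideal $(t)$ and residue field $k'$, a matrix in $M_g\!\left(k'\llbracket t\rrbracket\right)$ is invertible there if and only if its reduction modulo $t$ is invertible in $M_g(k')$, i.e. if and only if its determinant has a nonzero constant term. So the first step is just to pass to $H\!\left(X(0)\right) = \left(x_j(0)^{i-1}/y_j(0)\right)_{1\le i,j\le g}$, where $\left(x_j(0),y_j(0)\right)$ is the point $R_j$, and to show $\det H\!\left(X(0)\right)\neq 0$.

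For the second step I would factor $y_j(0)^{-1}$ out of the $j$-th column of $H\!\left(X(0)\right)$ --- which is legitimate because each $y_j(0)$ is nonzero, as $R_j$ is neither a point at infinity nor a Weierstrass point, so $y_j(0)^2 = f_2\!\left(x_j(0)\right)\neq 0$ --- leaving $\det\left(x_j(0)^{i-1}\right)_{1\le i,j\le g}$, a Vandermonde determinant in the $x_j(0)$. It then remains to check that the abscissas $x_1(0),\dots,x_g(0)$ are pairwise distinct, so that this Vandermonde is nonzero. This is where the hypotheses on $Q$ and $R_1,\dots,R_g$ are used: since $I_P(Q) = \{R_1,\dots,R_g\}$ consists of $g$ distinct points, and in the Mumford representation the ordinate of $R_j$ is the value $V\!\left(x_j(0)\right)$ of a single interpolating polynomial $V$, an equality $x_i(0) = x_j(0)$ would force $R_i = R_j$ and hence $i=j$. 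Therefore the Vandermonde factor is nonzero, $\det H\!\left(X(0)\right)\neq 0$, and the proposition follows.

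The local-ring reduction and the Vandermonde evaluation are routine; the step I expect to be the real content is the last one, namely converting ``$I_P(Q)$ contains $g$ distinct points, none of them at infinity or a Weierstrass point'' into ``$y_j(0)\neq 0$ for all $j$ and the $x_j(0)$ are pairwise distinct.'' The nonvanishing of the $y_j(0)$ is immediate, but the distinctness of the abscissas genuinely uses that a reduced degree-$g$ divisor on $C_2$ cannot contain a point together with its hyperelliptic conjugate --- equivalently, that such a pair would be identified by the Mumford polynomial $V$ --- which is precisely what rules out $x_i(0) = x_j(0)$ with $R_i \neq R_j$.
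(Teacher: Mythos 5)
Your proof is correct and follows essentially the same approach as the paper: both exploit the alternant (Vandermonde-type) structure of $H(X(t))$ and reduce invertibility to the nonvanishing of the $y_j(0)$ together with the pairwise distinctness of the $x_j(0)$. The paper computes $\det H(X(t)) = \prod_{i<j}\big(x_j(t)-x_i(t)\big)\big/\prod_i y_i(t)$ directly in $k'\llbracket t\rrbracket$ and asserts $x_i(0)\neq x_j(0)$ from the genericity assumptions on $I_P(Q)$, whereas you make the reduction modulo $t$ explicit and sketch why the distinctness of abscissas follows from the reduced Mumford representation — the same argument at slightly finer grain.
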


\begin{proof}
The matrix $H \! \left( X(t) \right)$ is an alternant matrix, its determinant is given by
\begin{equation*}
\det \left( H \! \left( X(t) \right) \right) = \dfrac{\prod \limits _{ 1 \leq i < j \leq g} {\left( x_j(t) - x_i(t) \right)} }{\prod \limits _{i=1} ^ g {y_i (t) } }
\end{equation*}
which is invertible in ${M}_g \! \left( k' \llbracket t \rrbracket \right)$ because $x_i(0) \neq x_j(0)$  for all $i,j \in \{ 1 , \ldots , g\}$ such that $i\neq j$.
\end{proof}

\begin{cor}
\label{cor:isogenyassumptions}
Let $p$ be a prime number. We assume that $k$ is an extension of $\mathbb{Q}_p$. Up to a change of variables, Equation~\eqref{eq:diffsys1} fulfills all the assumptions of Equation~\eqref{eq:nonlinearequation}, in particular it admits a unique solution in $k' \llbracket t \rrbracket$.
\end{cor}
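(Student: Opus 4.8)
The plan is to remove the one feature that prevents \eqref{eq:diffsys1} from being literally of the form \eqref{eq:nonlinearequation}: its solution $X(t)=(x_1(t),\dots,x_g(t))$ need not vanish at $t=0$, since $x_j(0)$ is the $x$-coordinate of $R_j$, and by our assumptions on $I_P(Q)$ these values are finite (no point at infinity) and pairwise distinct (the $R_i$ are distinct). So I would translate the unknown: set $\widetilde{x}_j(t)=x_j(t)-x_j(0)\in t\,\ktp$ and $\widetilde{X}=(\widetilde{x}_1,\dots,\widetilde{x}_g)\in (t\,\ktp)^g$, so that $\widetilde{X}\,'=X'$ and $\widetilde{X}(0)=0$. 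Since $Q$ is a non-Weierstrass point, $u(t)=u_Q+t$ has $f_1(u_Q)=v_Q^2\neq 0$, and as $k$ has characteristic $0$ the binomial series for $(1+w)^{-1/2}$ is available; hence $1/v(t)$ is a power series in $t$ and $G(t)\in\ktp{}^g$.

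The substantive step is to present $H(X(t))$ as a matrix whose $(i,j)$ entry is an analytic function of the single shifted coordinate $\widetilde{x}_j(t)$. Indeed $x_j(t)^{i-1}=(\widetilde{x}_j(t)+x_j(0))^{i-1}$ is polynomial in $\widetilde{x}_j(t)$; and since $R_j$ is not a Weierstrass point, $f_2(x_j(0))=y_j(0)^2\neq 0$ with $y_j(0)\in k'$, so, exactly as for $1/v$, the relation $y_j(t)^2=f_2(\widetilde{x}_j(t)+x_j(0))$ gives $1/y_j(t)=y_j(0)^{-1}\bigl(1+u_j(\widetilde{x}_j(t))\bigr)^{-1/2}=\psi_j(\widetilde{x}_j(t))$ for a polynomial $u_j$ vanishing at $0$ and a power series $\psi_j\in\ktp$ — the branch of the square root being forced to the one equal to $y_j(0)$ at $t=0$, namely $y_j(t)$ itself. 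Thus $H(X(t))_{ij}=f_{ij}(\widetilde{x}_j(t))$ with analytic $f_{ij}\in\ktp$, which is the shape of $H_{\mathrm{f}}$ up to transposing the index pattern (immaterial for what follows). Moreover $H_{\mathrm{f}}(0)=H(X(0))$ lies in $\text{GL}_g(k')$: by Proposition~\ref{prop:Determinant} its determinant equals $\prod_{i<j}(x_j(0)-x_i(0))\,/\,\prod_j y_j(0)$, which is nonzero precisely because the $x_j(0)$ are distinct and the $y_j(0)$ are nonzero.

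With these reductions the system reads $\widetilde{H}(\widetilde{X})\,\widetilde{X}\,'=G$, where $\widetilde{H}$ is analytic, $\widetilde{H}(0)\in\text{GL}_g(k')$, $G\in\ktp{}^g$, and the solution $\widetilde{X}$ is sought in $(t\,\ktp)^g$ — that is, exactly the assumptions of \eqref{eq:nonlinearequation} with $K=k'$. I would then conclude by Proposition~\ref{prop:solutionexistence}, whose proof only uses analyticity of the matrix and its invertibility at the origin: the coefficients of $\widetilde{X}$ are determined one after another and uniquely, so the solution exists and is unique in $(t\,\ktp)^g$, and hence $X=\widetilde{X}+(x_1(0),\dots,x_g(0))$ is the unique solution of \eqref{eq:diffsys1} in $\ktp{}^g$ with the prescribed value at $t=0$. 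I expect the only real obstacle to be the linearisation of the radicals $1/y_j$ and $1/v$ as analytic functions of the shifted coordinates: this is exactly where the hypotheses that $Q$ and $I_P(Q)$ contain no Weierstrass point and no point at infinity, and that the expansion point may be moved to $x_j(0)$, are used.
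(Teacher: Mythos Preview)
Your argument is correct and is precisely the verification the paper leaves implicit: the corollary is stated without proof, as an immediate consequence of Proposition~\ref{prop:Determinant} and the standing assumptions on $Q$ and $I_P(Q)$, and your translation $\widetilde{x}_j=x_j-x_j(0)$ together with the expansion of $1/y_j$ as an analytic function of $\widetilde{x}_j$ is exactly the intended ``change of variables''. Your remark that the index convention in $H_{\mathrm f}$ is transposed relative to the isogeny matrix is accurate and, as you say, immaterial---the framework of Section~\ref{sec:GeneralCase} works verbatim with the column-dependent convention.
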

By Corollary~\ref{cor:isogenyassumptions}, it is straightforward to make use of Algorithm~\ref{algo:DiffSolve} to solve Equation~\eqref{eq:diffsys1} when $k$ is an unramified extension of the field of $p$-adic numbers. This gives rise to an algorithm that computes a rational representation of a given $(\ell , \cdots , \ell)$-isogeny between Jacobians of hyprelliptic curves of genus $g$, whose complexity is quasi-optimal with respect to $\ell$ but not in $g$. Thus, Algorithm~\ref{algo:DiffSolve} can only be used efficiently to compute isogenies of Jacobians of hyperelliptic curves of small genus.



\section{Solving alternant systems of differential equations}
\label{sec:alternantdifferentialsystem}
Let $p$ be an odd prime number. In this section, we aim for effective resolution of Equation~\eqref{eq:diffsys1} when it is defined over an unramified extension of $\mathbb{Q}_p$. We re-examine the Newton scheme of Algorithm~\ref{algo:DiffSolve} to make it quasi-linear in the dimension of the solution $X(t)$. This will give a quasi-optimal algorithm to compute rational representations of isogenies between Jacobians of hyperelliptic curves over finite fields, after having possibly lifted the problem in the $p$-adics. 

The three next subsections are concerned with preliminary material: we introduce the differential system that we want to solve, then we recall some computational results that will eventually be used in our main algorithm exposed in Section~\ref{subsec:Alternantdifferentialsystem}.


\subsection{The setup}
\label{subsec:setup}
We keep the same notation as Section~\ref{subsec:diffeq} and we assume that $p \neq 2$ and $k$ is a finite field of characteristic $p$. For $i \in \{ 1 , \ldots , g\}$, write $R_i = (x_i^{(0)}, y_i^{(0)})$. We recall that the computation of the rational representation associated with $I_P$ reduces to the problem of computing an approximation of the following differential system whose unknown is $X(t) = (x_1(t) , \ldots , x_g(t)) \in k' $.
\begin{equation}
\label{eq:mainsystem}
\left\{
\begin{array}{l}
H(X(t)) \cdot  X'(t) = G(t), \quad X(0) = (x_1^{(0)} , \cdots , x_g^{(0)}), \\
y_j(t)^2 = f_2(x_j(t)), \quad y_j(0) = y_j^{(0)} \, \quad \text{for }j= 1, \ldots , g 
\end{array}
\right.
\end{equation}
where $G(t) \in \kt ^g $ and $H(X(t))$ are the matrices defined by
\begin{equation*}
\begin{array}{ccc}
G(t) = \dfrac{1}{v(t)}\left( \sum \limits _{i= 1}^g {m_{ij} {u(t)^{i-1}}} \right) _ {1 \leq j \leq g}
&
\text{and}
&
H(X(t)) = \left(\dfrac{x_j(t)^{i-1}}{y_j(t)} \right) _{1 \leq i,j \leq g }.
\end{array}
\end{equation*}
Based on the discussion in Section~\ref{subsec:Isogeniesanddifferentialeq}, we are sometimes obliged to lift Equation~\eqref{eq:mainsystem} to the $p$-adics. Therefore, we will replace $k$ by an unramified extension $K_0$ of $\mathbb{Q}_p$ and $k'$ by an unramified extension $K$ of $K_0$ of degree at most $O(g)$. Consequently, $f_1$, $f_2$ and the components of $G(t)$ have coefficients in $\OKp$; moreover, $X(t) \in \OKt ^g$. 

By Corollary~\ref{cor:isogenyassumptions}, Equation~\eqref{eq:mainsystem} can be solved using the following Newton iteration
\begin{equation*}
X_{2m+1}(t) = X_m(t) + H(X_m(t))^{-1} \int (G - H(X_m(t))\cdot X'_m(t)) \, dt.
\end{equation*}
Or, equivalently, 
\begin{equation}
\label{eq:Newtoniteration}
H(X_m(t)) \cdot ( X_{2m+1}(t) -  X_m(t))  = \int (G - H(X_m(t))\cdot X'_m(t)) \, dt.
\end{equation}

A call from Algorithm~\ref{algo:DiffSolve} gives the desired result, but this is not optimal in $g$. As explained in Section~\ref{subsec:Isogeniesanddifferentialeq}, this lack of efficiency is due to the fact that the components of the solution $X(t)$ of Equation~\eqref{eq:mainsystem} have coefficients defined over the field $K$, whose degree over $K_0$ depends on $g$. For this reason, we work directly on the first Mumford polynomial
\begin{equation*}
U(t,z) = \prod \limits _{j=1}^g {(z - x_j(t))}
\end{equation*}
whose coefficients are defined over the ring $\Ktp$: we rewrite the Newton scheme~\eqref{eq:Newtoniteration} accordingly and design fast algorithms for iterating it in quasi-linear time.


\subsection{Computing Newton sums}
We recall an efficient algorithm for the computation of the Newton sums of a polynomial. Let $K_0$ be an unramified extension of $\mathbb{Q}_p$. Let $P(t,z)$ be a monic polynomial of degree $d$ with coefficients in $\Ktp$ such that $P(0,z)$ is separable over $K_0$ and $x_1(t), x_2(t) , \cdots , x_d(t)$ its roots in $\Kt$, where $K$ denotes the splitting field of $P(0,z)$. We define the \emph{$i$-th Newton sum} $s_i(t)$ of $P$ by 
\begin{equation*}
s_i(t) = \sum \limits _{j=1}^d  {{x_j(t)}^i}  \in \Ktp, 
\end{equation*}
and we are interested in designing an efficient algorithm to compute it, only from the coefficients of $P$.\\
 Let $P^*$ be the reciprocal polynomial of $P$, \textit{i.e.} $P^*(t,z)= \prod \limits _{j=1}^d {(1-x_j(t)z)}$. It is well known that the $i$-th coefficient of the power series expansion of $ (P^*)'/P^*$ in $\Ktzzp$ is equal to $- s_{i+1}(t) $ (see for instance~\cite[Lemma~2]{NewtonSums}). This gives Algorithm~\ref{algo:NewtonSums} to compute the first $g$ Newton sums of the polynomial $P(t,z)$ modulo $t^{n+1}$.

\begin{figure}[h!]
  \begin{center}
    \parbox{0.8\linewidth}{%
      \begin{footnotesize}\SetAlFnt{\small\sf}%
        \begin{algorithm}[H]%
          \caption{Newton Sums} %
          \label{algo:NewtonSums}%
          \SetKwInOut{Input}{Input} %
          \SetKwInOut{Output}{Output} %
          \SetKwProg{NewtonSums}{\tt NewtonSums}{}{}%
          \NewtonSums{$(P,g,n)$}{
            \Input{$P \in \Ktp[z]$, $g \in \mathbb{N}^*$, $n \in \mathbb{N}^*$.}
            \Output{The sequence $s_1(t) \mod t^{n+1}, \cdots , s_g(t) \mod t^{n+1}$.} \BlankLine %
            {}
            $f := -(P^*)' / P^* \mod (t^{n+1},z^{g})$  \tcp*{$f = \sum \limits _{i=0}^{g-1} f_i(t) z^i$}
            \KwRet{$f_0(t),\cdots , f_{g-1}(t)$}
          }
        \end{algorithm}
      \end{footnotesize}
    }
  \end{center}
\end{figure}

\begin{prop}
\label{prop:complexitynewtonsums}
Let $P \in \OKtp[z]$ be a monic polynomial and $g,n,N \in \mathbb{N}^*$. When the procedure {\tt NewtonSums} runs with fixed point arithmetic at precision $O(p^N)$, all the computations are done in $\OKp$ and the result is correct at precision $O(p^N)$. Moreover, Algorithm~\ref{algo:NewtonSums} performs at most $\softO(ng)$ operations in $K_0$.
\end{prop}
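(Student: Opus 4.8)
Both assertions rest on a single elementary remark. Writing $P(t,z) = z^d + a_{d-1}(t)z^{d-1} + \cdots + a_0(t)$ with $a_j \in \OKtp$, one has $P^*(t,z) = 1 + a_{d-1}(t)z + \cdots + a_0(t)z^d$, so the coefficient of $z^0$ in $P^*$ is the unit $1$. Hence $P^*$ is invertible in $\OKtzzp$, its inverse modulo $(t^{n+1},z^g)$ has coefficients in $\OKp$, and the derivative $(P^*)'$ (taken with respect to $z$, so that $f$ is a power series in $z$ over $\OKtp$) again has coefficients in $\OKtp$.

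For the precision part, I would note that the only arithmetic operation performed by Algorithm~\ref{algo:NewtonSums} besides additions and multiplications is the division by $P^*$, that is, the computation of $(P^*)^{-1}$ modulo $(t^{n+1},z^g)$ followed by a multiplication. Since at every step of this inversion the pivot is the unit $1$ — one never divides by an element of positive valuation — the division rule of the fixed point arithmetic model never raises an error and returns the exact quotient modulo $p^N$; in particular no precision beyond $O(p^N)$ is consumed, which is exactly why $M=N$ suffices here, in contrast with Theorem~\ref{thm:mainthmbis}, where the integration step forces the $\lfloor \log_p n\rfloor$ loss. Consequently every intermediate quantity has coefficients in $\OKp$ and is computed without loss of precision. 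To identify the output, I would invoke the classical Newton-sum identity \cite[Lemma~2]{NewtonSums}, valid formally over $\OKtp\llbracket z\rrbracket$ precisely because $P^*$ has unit constant coefficient: the coefficient of $z^i$ in $(P^*)'/P^*$ equals $-s_{i+1}(t)$. As each $s_i$ is an integer polynomial in the $a_j$, one has $s_i \in \OKtp$, so the exact value of $f$ is $\sum_{i=0}^{g-1} s_{i+1}(t)z^i$, which is what the algorithm returns modulo $p^N$. No separability hypothesis on $P(0,z)$ is needed for this step.

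For the complexity part, the whole computation lives in $\OKtp[z]/(t^{n+1},z^g)$, whose elements are encoded by $O(ng)$ coefficients in $\OKp$. Forming $P^*$ modulo $z^{g+1}$ (a reversal of the top $g+1$ coefficients of $P$) and differentiating it in $z$ cost $O(ng)$ operations in $K_0$. The inverse $(P^*)^{-1}$ modulo $(t^{n+1},z^g)$ is obtained by the standard Newton iteration for power-series inversion — applied after a Kronecker substitution $z \mapsto t^{n+1}$, which turns the task into a univariate one of degree $< g(n+1)$, or directly on bivariate truncated series — in $O(\cM(ng)) = \softO(ng)$ operations; the final product $(P^*)' \cdot (P^*)^{-1}$ modulo $(t^{n+1},z^g)$ is one more multiplication of the same size, hence again $\softO(ng)$. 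Summing the three contributions yields the announced bound.

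\textbf{Main obstacle.} There is no genuine difficulty: the only point deserving attention is the precision bookkeeping, namely checking that every division carried out by the procedure is a division by the unit $1$ — the constant-in-$z$ term of $P^*$ together with the pivots of its Newton inversion — so that the fixed point model introduces no loss and all computations stay in $\OKp$; once this is observed, both parts follow from routine estimates and standard fast arithmetic on bivariate truncated power series.
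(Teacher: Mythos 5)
Your proof is correct and follows essentially the same route as the paper's: invertibility of $P^*$ in $\OKtzzp$ (because $P$ is monic, so $P^*$ has constant term $1$) keeps all computations in $\OKp$ without precision loss, and the cost is dominated by the Newton iteration for inverting $P^*$ modulo $(t^{n+1},z^g)$ together with one bivariate truncated multiplication, giving $\softO(ng)$. Your version merely spells out the pivot/precision bookkeeping and the Kronecker-substitution realization of the multiplication bound in more detail than the paper does.
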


\begin{proof}
The fact that all the computations stay within $\OKp$ is a direct consequence of the assumption $P \in \OKtp[z]$ and the fact that $P^*$ is invertible in $\OKtzzp$. In addition, it is easy to see that the output of {\tt NewtonSums}$(P,g,n)$ is correct at precision $O(p^N)$.\\ The inverse power series of $P^*$ modulo $(t^{n+1},z^{g})$ in $\Ktzzp$ is computed by the Newton iteration $Q \mapsto Q(2- Q P^*)$. Therefore, the complexity of the computation of $f$ only depends on the complexity of multiplying two bivariate polynomials of total degree $n+g$. This can be done using at most $\softO(gn)$ operations in $K_0$.
\end{proof}


\subsection{Hankel matrix-vector product}
\label{subsec:Hankel}
Let $g \in \mathbb{N}^*$. We recall that a $ g \times g$ Hankel matrix $A$ is a $g \times g$ matrix of the form
\begin{equation*}
A = \begin{pmatrix}
a_0 & a_1 & a_2 & \cdots & \cdots & a_{g-1} \\
a_1 & a_2 & & \cdots & \cdots & a_{g}\\
a_2 & & & & & \vdots \\
\vdots & & & & & \vdots\\
a_{g-1} &\cdots  & \cdots & a_{2g-4} & a_{2g-3} & a_{2g-2}
\end{pmatrix}.
\end{equation*}
Matrix-vector multiplication for this type of matrix can be computed in $O(M(g))$ arithmetic operations instead of $O(g^\omega)$, where $\omega \in ]2,3]$ is a feasible exponent of matrix multiplication.
\begin{prop} 
\label{prop:Hankelproduct} 
Let $n \in \mathbb{N}$. Let $K_0$ be an unramified extension of $\mathbb{Q}_p$. Let $A=(a_{i+j-2}(t))_{i,j} \in M_g(\Ktp)$ be a Hankel matrix and $v = (v_1(t), \cdots , v_g(t)) \in \Ktp^g$. Let $f$ and $h$ be the two polynomials
\begin{equation*}
f(t,z) = a_0(t) + a_1(t) z + a_2(t) z^2 + \cdots + a_{2g-3}(t)z^{2g-3}+ a_{2g-2} (t)z^{2g-2}
\end{equation*}
 and
\begin{equation*}
h(t,z) = v_1(t) z^{g-1} + v_2(t) z^{g-2} + \cdots + v_g(t).
\end{equation*}
Write $f(t,z) \cdot  h(t,z) \mod (t^{n+1}, z^{2g-1}) = \sum \limits _{i=0}^{2g-2} {w_i(t) z^i} $ then 
\begin{equation*}
A \cdot  v \! \mod t^{n+1} = (w_{g-1}, \cdots, w_{2g-2}).
\end{equation*}
\end{prop}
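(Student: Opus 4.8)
The plan is to prove the identity by a direct coefficient comparison, working throughout with polynomials in $z$ whose coefficients lie in $\Ktp$ (everything being understood modulo $t^{n+1}$). First I would expand the product $f(t,z)\cdot h(t,z)$: from $f(t,z)=\sum_{k=0}^{2g-2}a_k(t)z^k$ and $h(t,z)=\sum_{j=1}^{g}v_j(t)z^{g-j}$, collecting the coefficient of $z^m$ gives $w_m(t)=\sum_{j=1}^{g}a_{m-g+j}(t)\,v_j(t)$, where by convention $a_\ell=0$ for $\ell<0$ or $\ell>2g-2$. Since $m$ ranges over $0,\dots,2g-2$, reducing the product modulo $z^{2g-1}$ discards nothing relevant.

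Next I would write out the entries of $A\cdot v$. By the definition of the Hankel matrix, $A=(a_{i+j-2}(t))_{1\le i,j\le g}$, so $(A\cdot v)_i=\sum_{j=1}^{g}a_{i+j-2}(t)\,v_j(t)$ for each $i\in\{1,\dots,g\}$. Substituting $m=i+g-2$ into the formula for $w_m$ makes the two sums coincide term by term, because $m-g+j=i+j-2$; hence $(A\cdot v)_i=w_{i+g-2}(t)$. As $i$ runs from $1$ to $g$, the index $i+g-2$ runs exactly over $g-1,g,\dots,2g-2$, which yields $A\cdot v=(w_{g-1},\dots,w_{2g-2})$. One also checks that for these values of $i$ every index $i+j-2$ occurring in the sum lies in $\{0,\dots,2g-2\}$, so the zero-padding convention agrees with the genuine coefficients $a_0,\dots,a_{2g-2}$ of $f$, and nothing is lost in the truncation.

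Finally, since all operations are carried out modulo $t^{n+1}$, the equality holds modulo $t^{n+1}$, which is exactly the claim. The main thing to be careful about---though it is bookkeeping rather than a genuine obstacle---is the index shift $m\mapsto m-g$ relating the $z$-degree in the product to the row index of the matrix, together with the verification that truncating modulo $z^{2g-1}$ keeps all the coefficients $w_{g-1},\dots,w_{2g-2}$ intact.
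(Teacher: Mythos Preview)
Your proof is correct and follows essentially the same approach as the paper: a direct coefficient comparison showing that the $(g-2+i)$-th coefficient of $f\cdot h$ equals $\sum_{j=1}^g a_{i+j-2}(t)v_j(t)=(A\cdot v)_i$. Your write-up is in fact a bit more careful than the paper's about the zero-padding convention and the harmlessness of the truncation modulo $z^{2g-1}$, but the argument is the same.
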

\begin{proof}
Let $i \in \{ 1 , \ldots , g\}$. The $(g-2+i)$-th coefficient of the product $R = f(t,z) \cdot  h(t,z)$ is equal to 
\begin{equation*}
R_{g-2+i} (t)= \sum \limits _{j=i-1}^{g-2+i} {a_{j}(t)v_{j+2-i}(t)} = \sum \limits _{j=1}^{g} {a_{i+j-2}(t)v_{j}(t)}.
\end{equation*}
Therefore, $R_{g-2+i} (t)$ is the $i$-th component of the product $A \cdot v$. 
\end{proof}

Proposition~\ref{prop:Hankelproduct} gives a quasi-linear algorithm to compute the matrix-vector product for Hankel matrices.

\begin{figure}[h!]
  \begin{center}
    \parbox{0.8\linewidth}{%
      \begin{footnotesize}\SetAlFnt{\small\sf}%
        \begin{algorithm}[H]%
          \caption{Hankel matrix-vector product} %
          \label{algo:HankelProduct}%
          \SetKwInOut{Input}{Input} %
          \SetKwInOut{Output}{Output} %
          \SetKwProg{HankelProd}{\tt HankelProd}{}{}%
          \HankelProd{$(A,v,n)$}{
            \Input{$A =(a_{i+j-2}(t))_{i,j}  \mod t^{n+1}$, $v = (v_1(t) , \cdots, v_g(t)) \mod t^{n+1}$, $n \in \mathbb{N}$.}
            \Output{The product $A \cdot v \mod t^{n+1}$.} \BlankLine %
            {}
            $ f := a_0 + a_1 z + a_2 z^2 + \cdots + a_{2g-3}z^{2g-3}+ a_{2g-2} z^{2g-2}$\;%
            $h:=v_1 z^{g-1} + v_2 z^{g-2} + \cdots + v_g$\;%
            $w := f h \mod (t^{n+1}, z^{2g-1})$  \tcp*{$w= \sum \limits _{i=0}^{2g-2} w_iz^i$}

            \KwRet{$w_{g-1}, \cdots, w_{2g-2}$}
          }
        \end{algorithm}
      \end{footnotesize}
    }
  \end{center}
\end{figure}

\begin{prop}
\label{prop:Hakelcomplexity}
Let $n \in \mathbb{N}$. Let $K_0$ be an unramified extension of $\mathbb{Q}_p$. Let $A=(a_{i+j-2}(t))_{i,j} \in M_g(\Ktp)$ be a Hankel matrix and $v = (v_1(t), \cdots , v_g(t)) \in \Ktp^g$. When it is called on the input $(A,v,n)$, the algorithm {\tt HankelProd} performs at most $\softO(ng)$ operations in $K_0$. 
\end{prop}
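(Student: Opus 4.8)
The plan is to show that every step of {\tt HankelProd} except the bivariate product is linear in the size of the data, and that the bivariate product itself costs $\softO(ng)$ by a Kronecker-substitution reduction to a single univariate multiplication over $K_0$. Correctness of the output is already guaranteed by Proposition~\ref{prop:Hankelproduct}, so only the complexity count is at stake.

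First I would dispose of the bookkeeping steps. Forming $f$ and $h$ from the entries of $A$ and $v$ only amounts to relabelling $O(g)$ truncated power series in $\Ktp$, each carrying at most $n+1$ coefficients in $\OKp$ (or $K_0$); this is $O(ng)$ operations. Likewise, extracting $w_{g-1}, \ldots, w_{2g-2}$ from $w = fh \bmod (t^{n+1},z^{2g-1})$ touches $O(ng)$ coefficients. So these contribute $O(ng)$ in total.

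Second, the real work is the computation of $w = f\,h \bmod (t^{n+1}, z^{2g-1})$. After truncating $f$ and $h$ modulo $t^{n+1}$, their honest product in $K_0[t][z]$ has $t$-degree at most $2n$ and $z$-degree at most $3g-3$. I would apply Kronecker substitution $z \mapsto t^{D}$ with $D = 2n+1$: this turns $f$ and $h$ into univariate polynomials over $K_0$ of degree $O(ng)$, their product is computed as a single univariate multiplication in $\cM\bigl((2n+1)(2g-1)\bigr)$ operations in $K_0$, and the bivariate coefficients are recovered by de-interleaving. The choice $D = 2n+1 > 2n$ ensures the packing is lossless, i.e. coefficients attached to distinct powers of $z$ never collide. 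Since fast polynomial multiplication over a ring satisfies $\cM(d) = \softO(d)$, this step costs $\softO(ng)$ operations in $K_0$; the final reduction modulo $t^{n+1}$ and truncation at $z^{2g-1}$ is again $O(ng)$.

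Summing the contributions yields the claimed bound $\softO(ng)$. There is essentially no obstacle here; the only point requiring a moment's care is the verification that the packing exponent $2n+1$ strictly exceeds the $t$-degree of $fh$ before truncation (hence $2n+1$ rather than $n+1$), and one could equally invoke a direct bivariate FFT with the same asymptotic cost.
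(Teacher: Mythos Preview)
Your argument is correct and follows the same route as the paper, which simply states that the result is a direct consequence of Proposition~\ref{prop:Hankelproduct} (reducing the Hankel product to a single bivariate polynomial multiplication of size $O(n)\times O(g)$). You have merely made explicit the Kronecker-substitution step that the paper takes for granted when asserting that such a bivariate product costs $\softO(ng)$ operations in $K_0$.
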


\begin{proof}
This is a direct consequence of Proposition~\ref{prop:Hankelproduct}. 
\end{proof}


\subsection{The alternant differential system}
\label{subsec:Alternantdifferentialsystem}
We go back to the system of differential equations~\eqref{eq:mainsystem}. We will make use of Equation~\eqref{eq:Newtoniteration} to construct its solution by successive approximations. 

Let $m \geq 0$ be an integer and $n = 2m+1$. We suppose that we are given an approximation $U_m(t,z)= \prod \limits _{j=1} ^g {(z -x_j^{(m)}(t))}\in \Ktzp$ of the polynomial $U(t,z)$ modulo $t^{m+1}$, such that the vector $X_m=(x_1^{(m)}(t), \ldots x_g^{(m)}(t))$ satisfies Equation~\eqref{eq:mainsystem} modulo $t^m$. In order to compute an approximation $U_n(t,z)$ of $U(t,z)$ modulo $t^{n+1}$ from $U_m(t,z)$, we use Equation~\eqref{eq:Newtoniteration} and perform the three following steps:

\begin{enumerate}
\item Compute $H(X_m(t))\cdot X'_m(t)$ modulo $t^{n+1}$.
\item Compute $F_m (t) =  \int (G - H(X_m(t))\cdot X'_m(t)) \, dt. \mod t^{n+1}$
\item Compute $U_n$ by solving linear system $H(X_m(t)) \cdot (X_n(t) - X_m(t))= F_m(t)$ modulo $t^{n+1}$.
\end{enumerate}

Once step~1 is carried out, step~2 can be executed using at most $\softO(ng)$ operations in $K_0$, because the components of the vector $H(X_m(t))\cdot X'_m(t)$ are defined over $K_0$. 
The following construction will show that the vector $X_m$ will not be of any use to perform steps~1 and 3 but only the polynomial $U_m(t,z)$.

Write $f_2(z) = \sum \limits _{j=1}^{g-1} {f_j z^j}$. Let $s_i^{(m)}(t) \in \Ktp$ be the $i$-th Newton sum of $U_m(t,z)$ and 
\begin{equation}
\label{eq:ri}
r_i^{(m)} (t) = \dfrac{1}{i} \dfrac{ds_i^{(m)}(t)}{dt} = \sum \limits _{j=1} ^g \dfrac{dx_j^{(m)}(t)}{dt}\cdot {x_j^{(m)}(t)}^{i-1} \in \Ktp .
\end{equation}
Let $W_m(t,z) = \sum \limits _{i=0}^{g-1} w_i^{(m)}(t) \,z^i \in \Ktzp$ be the degree $g-1$ polynomial such that
\begin{equation*}
W_m(t,z)^2  =   \dfrac{1}{f_2(z)} \mod (t^{n+1}, U_m(t,z)) \\
\end{equation*} 
with the initial condition $W_m(0,x_j ^{(0)})  = y_j^{(0)},$ for all $j \in \{ 1 , \ldots , g\}.$\\
Set $V_m (t,z) = f_2(z) \cdot  W_m (t,z) \mod (t^{n+1}, U_m(t,z))$ and, for $ j \in \{1, \ldots , g\}$, let $y_j^{(m)} (t)$ be the power series $V_m ( t , x_j^{(m)}(t))$. By construction, we have 
\begin{equation*}
W_m ( t , x_j^{(m)} (t) ) \cdot y_j^{(m)}(t) \equiv 1 \mod t^{n+1}
\end{equation*} 
and 
\begin{equation*}
y_j^{(m)} (t) \equiv y_j(t) \mod t^{m+1}, 
\end{equation*} 
for all $ j \in \{1, \ldots , g\}$.
\begin{prop}
\label{prop:HankelproductHX}
The product $H(X_m(t))\cdot X'_m(t)$ satisfy the following relation:
\begin{equation*}
H(X_m(t))\cdot X'_m(t) \mod {t^{n+1}} =
\begin{pmatrix}
r_1^{(m)} & r_2^{(m)} & \cdots & r_g^{(m)}\\
r_2^{(m)} & r_3^{(m)} &  & r_{g+1}^{(m)} \\
\vdots \\
r_{g}^{(m)} & r_{g+1}^{(m)} & \cdots & r_{2g-1}^{(m)}
\end{pmatrix}
\begin{pmatrix}
w_0(t) \\
w_1(t) \\
\vdots \\
w_{g-1}(t)
\end{pmatrix}
\end{equation*}
\end{prop}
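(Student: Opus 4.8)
The plan is to expand the $i$-th component of $H(X_m(t)) \cdot X'_m(t)$ directly and rearrange it into a Hankel matrix-vector product. The key point is that, by construction, $W_m(t, x_j^{(m)}(t))$ is the inverse of $y_j^{(m)}(t) = V_m(t, x_j^{(m)}(t))$ modulo $t^{n+1}$: this follows from the defining congruence $W_m(t,z)^2 \equiv 1/f_2(z) \pmod{(t^{n+1}, U_m(t,z))}$, the equality $V_m = f_2 \cdot W_m \bmod (t^{n+1}, U_m(t,z))$, and the fact that $x_j^{(m)}(t)$ is a root of $U_m(t,z)$. Hence each entry $x_j^{(m)}(t)^{i-1}/y_j^{(m)}(t)$ of $H(X_m(t))$ equals $x_j^{(m)}(t)^{i-1}\, W_m(t, x_j^{(m)}(t))$ modulo $t^{n+1}$.

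First I would write, for $i \in \{1, \ldots, g\}$, the $i$-th component of $H(X_m(t)) \cdot X'_m(t)$ as
\[
\sum_{j=1}^g \frac{x_j^{(m)}(t)^{i-1}}{y_j^{(m)}(t)}\, \frac{dx_j^{(m)}(t)}{dt} \equiv \sum_{j=1}^g x_j^{(m)}(t)^{i-1}\, W_m\!\left(t, x_j^{(m)}(t)\right) \frac{dx_j^{(m)}(t)}{dt} \pmod{t^{n+1}}.
\]
Then I would substitute $W_m(t,z) = \sum_{k=0}^{g-1} w_k^{(m)}(t)\, z^k$ and interchange the two finite sums to get
\[
\sum_{k=0}^{g-1} w_k^{(m)}(t) \sum_{j=1}^g x_j^{(m)}(t)^{i+k-1}\, \frac{dx_j^{(m)}(t)}{dt} \;=\; \sum_{k=0}^{g-1} w_k^{(m)}(t)\, r_{i+k}^{(m)}(t),
\]
where the last equality is exactly the definition \eqref{eq:ri} of $r_{i+k}^{(m)}(t)$. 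Finally I would observe that, as $i$ runs over $\{1, \ldots, g\}$ and $k$ over $\{0, \ldots, g-1\}$, the index $i+k$ runs over $\{1, \ldots, 2g-1\}$, and that $\sum_{k=0}^{g-1} w_k^{(m)}(t)\, r_{i+k}^{(m)}(t)$ is precisely the $i$-th coordinate of the product of the $g \times g$ Hankel matrix with $(a,b)$-entry $r_{a+b-1}^{(m)}(t)$ by the column vector $(w_0^{(m)}(t), \ldots, w_{g-1}^{(m)}(t))$, which is the asserted identity.

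There is no substantial obstacle: the statement is a short computation once the inversion identity $1/y_j^{(m)}(t) \equiv W_m(t, x_j^{(m)}(t)) \pmod{t^{n+1}}$ is in place. The only things needing care are checking that all the truncations modulo $t^{n+1}$ are compatible with the differentiation and multiplication performed, and noticing that every quantity on the right-hand side ($r_i^{(m)}$ being a scaled derivative of a Newton sum of $U_m$, and $w_k^{(m)}$ a coefficient of $W_m$) is defined over $K_0$ — which is the structural gain that will later make the Newton iteration run in quasi-linear time.
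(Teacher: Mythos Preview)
Your argument is correct and follows exactly the paper's approach: the paper's proof is the one-line observation that $1/y_j^{(m)}(t) \equiv \sum_{k=0}^{g-1} w_k^{(m)}(t)\, x_j^{(m)}(t)^k \pmod{t^{n+1}}$, and you have simply written out the short computation that this implies.
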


\begin{proof}

This is a direct consequence of the fact that  
$ {1}/{y_j^{(m)}(t)} \mod t^{n+1} = \sum \limits _{i=0} ^{g-1}{w_i^{(m)} (t) x_j^{(m)}(t) ^{i} }$, for all $j \in \{ 1 , \ldots , g\}$.
\end{proof}

\begin{cor}
Step~1 can be carried out using at most $\softO(ng)$ operations in $K_0$.
\end{cor}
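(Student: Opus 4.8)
The plan is to reduce Step~1, via Proposition~\ref{prop:HankelproductHX}, to three independent subtasks and to bound each by $\softO(ng)$ operations in $K_0$; summing them gives the claim. By that proposition, $H(X_m(t))\cdot X_m'(t) \bmod t^{n+1}$ is the product of the $g\times g$ Hankel matrix $A=\big(r_{i+j-1}^{(m)}(t)\big)_{1\le i,j\le g}$ with the coefficient vector $(w_0^{(m)}(t),\dots,w_{g-1}^{(m)}(t))$ of $W_m(t,z)$. So it suffices to (i) compute $r_1^{(m)},\dots,r_{2g-1}^{(m)} \bmod t^{n+1}$, (ii) compute the polynomial $W_m(t,z) \bmod t^{n+1}$, and (iii) perform one Hankel matrix--vector product.

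Tasks (i) and (iii) are routine given the earlier results. For (i), I would call \texttt{NewtonSums}$(U_m,2g-1,n)$ to get the Newton sums $s_1^{(m)},\dots,s_{2g-1}^{(m)}$ of $U_m$ modulo $t^{n+1}$, which by Proposition~\ref{prop:complexitynewtonsums} costs $\softO((2g-1)n)=\softO(ng)$ operations in $K_0$, and then recover each $r_i^{(m)}$ from $s_i^{(m)}$ using~\eqref{eq:ri}, i.e.\ by differentiating a power series of degree at most $n$ and dividing its coefficients by the integer $i\le 2g-1$; this is $O(n)$ operations per index and $O(ng)$ in all. Task (iii) is one call to \texttt{HankelProd}$(A,w,n)$, which runs in $\softO(ng)$ operations in $K_0$ by Proposition~\ref{prop:Hakelcomplexity}.

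The substance of the argument is task (ii): computing $W_m$, the polynomial of degree $g-1$ with $W_m(t,z)^2\equiv 1/f_2(z)\pmod{(t^{n+1},U_m(t,z))}$ and the prescribed initial values $W_m(0,x_j^{(0)})=1/y_j^{(0)}$. I would work in the ring $R:=\big(\OKp[t]/(t^{n+1})\big)[z]/(U_m(t,z))$, which is free of rank $g$ over $\OKp[t]/(t^{n+1})$ because $U_m$ is monic in $z$; an element of $R$ is represented by a bivariate polynomial of $t$-degree at most $n$ and $z$-degree less than $g$, and (since $\deg_z f_2=O(g)$) a multiplication in $R$ costs $\softO(ng)$ operations in $K_0$ — a bivariate product followed by a Euclidean reduction modulo $U_m$, exactly as in the proof of Proposition~\ref{prop:complexitynewtonsums}. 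I would then compute $W_m=f_2^{-1/2}$ in $R$ by the division-free $t$-adic Newton iteration $Q\mapsto \tfrac12 Q\,(3-f_2Q^2)$, started from a square root of $1/f_2$ in the residue ring $R_0:=R/(t)=\OKp[z]/(U_0(z))$ taking the value $1/y_j^{(0)}$ at $z=x_j^{(0)}$. Such a seed exists and is unique: $U_0(z)$ is separable modulo $p$, so $R_0$ is, modulo $p$, a product of finite fields; $f_2$ is a unit of $R_0$ because $f_2(x_j^{(0)})=(y_j^{(0)})^2$ is a unit (the base point $R_j$ is not a Weierstrass point, hence $y_j^{(0)}\neq 0$); and the prescribed pointwise values then single out one square root of $1/f_2$ in $R_0$. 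As $2$ is a unit and $R_0$ is reduced, the iteration converges quadratically in $t$, so it needs $O(\log n)$ steps of cost $\softO(ng)$ each, for a total of $\softO(ng)$ operations in $K_0$. Substituting the outputs of (i) and (ii) into the Hankel product of (iii) completes Step~1.

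The main obstacle is this last point — showing that $W_m$ can be produced in quasi-linear time. It hinges on two facts: that a single arithmetic operation in $R=\big(\OKp[t]/(t^{n+1})\big)[z]/(U_m)$ costs only $\softO(ng)$ (a bivariate multiplication followed by reduction modulo the monic polynomial $U_m$), and that the residual problem at $t=0$ is solvable, which follows from $U_0$ being separable modulo $p$ and from the $y_j^{(0)}$ being units. Everything else — the Newton sums and the Hankel matrix--vector product — is delivered directly by Propositions~\ref{prop:complexitynewtonsums} and~\ref{prop:Hakelcomplexity}.
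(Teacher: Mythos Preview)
Your proof is correct and follows essentially the same approach as the paper: decompose Step~1 into computing the Newton sums (hence the $r_i^{(m)}$), computing $W_m$, and performing the Hankel matrix--vector product, invoking Propositions~\ref{prop:complexitynewtonsums}, \ref{prop:HankelproductHX} and~\ref{prop:Hakelcomplexity} exactly as the paper does. Your treatment of task~(ii) is in fact more thorough than the paper's, which simply asserts that ``$W_m(t,z)$ is constructed from the classical Newton scheme for extracting square roots'' without spelling out the cost of arithmetic in $\big(\OKp[t]/(t^{n+1})\big)[z]/(U_m)$ or the existence of the seed; your argument fills that gap, and your corrected initial condition $W_m(0,x_j^{(0)})=1/y_j^{(0)}$ is the one actually consistent with $W_m^2\equiv 1/f_2$.
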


\begin{proof}
Approximations of the Newton sums $s_i^{(m)}(t)$ modulo $t^{n+1}$ can be computed from the polynomial $U_m(t,z)$ and using Algorithm~\ref{algo:NewtonSums}. By Proposition~\ref{prop:complexitynewtonsums}, this can be done using at most $\softO(ng)$ operations in $K_0$. 
The power series $r_i^{(m)} (t)$ can be computed using Equation~\eqref{eq:ri} and the polynomial $W_m(t,z)$ is constructed from the classical Newton scheme for extracting square roots.\\
By Proposition~\ref{prop:HankelproductHX}, the product $H(X_m(t))\cdot X'_m(t) \mod t^{n+1}$ is a Hankel matrix-vector product that can be computed using Algorithm~\ref{algo:HankelProduct}. By Proposition~\ref{prop:Hakelcomplexity}, this can be done using at most $\softO(ng)$ operations in $K_0$. 
\end{proof}

We now explain how to compute $U_n$ from the linear system $H(X_m(t)) \cdot (X_n(t)- X_m(t)) = F_m(t)$ modulo $t^{n+1}$. The following proposition and its proof give a construction over $\Ktp$ of an approximation of the interpolating polynomial of the data $\{ (x_1^{(m)}(t), x_1^{(n)}(t)- x_1^{(m)}(t)), \ldots ,(x_g^{(m)}(t), x_g^{(n)}(t)- x_g^{(m)}(t) )\}$ modulo $t^{n+1}$.

\begin{prop}
\label{prop:interpolatingpolynomial}
Let $m \geq 0$ be an integer and $n= 2m+1$. There exists a polynomial $h_m(t,z) \in \OKt[z]$ of degree $g-1$ such that $x_j^{(n)}(t) = h_m(t, x_j^{(m)}(t))+ x_j^{(m)}(t) \mod t^{n+1}$ for all $j \in \{ 1 , \ldots , g\}$.
\end{prop}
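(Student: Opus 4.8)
The plan is to rewrite the Newton step $H(X_m(t))\cdot(X_n(t)-X_m(t)) = F_m(t)\bmod t^{n+1}$ as a $g\times g$ \emph{Hankel} linear system over $\OKp$, to solve it for the coefficient vector of the desired polynomial, and to recover integrality afterwards from the Lagrange description.

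\textbf{Reformulation over $\Ktp$.} Write $h_m(t,z)=\sum_{l=1}^{g}\eta_l(t)z^{l-1}$ and let $V(t)=\big(x_j^{(m)}(t)^{l-1}\big)_{1\le j,l\le g}$ be the Vandermonde matrix of the roots of $U_m$. The requirement $h_m(t,x_j^{(m)}(t))=x_j^{(n)}(t)-x_j^{(m)}(t)$ for all $j$ is precisely $V(t)\,\eta(t)=X_n(t)-X_m(t)$, which combined with the Newton relation is equivalent to $M(t)\,\eta(t)=F_m(t)$ for $M(t):=H(X_m(t))\cdot V(t)$. Using $1/y_j^{(m)}(t)=W_m(t,x_j^{(m)}(t))=\sum_{k=0}^{g-1}w_k^{(m)}(t)\,x_j^{(m)}(t)^{k}$ (as in the proof of Proposition~\ref{prop:HankelproductHX}), one computes $M(t)_{i,l}=\sum_j x_j^{(m)}(t)^{i+l-2}/y_j^{(m)}(t)$, which depends only on $i+l$; thus $M(t)$ is a Hankel matrix, and each entry is a symmetric function of the roots, expressible through the Newton sums of $U_m$ and the coefficients of $W_m$. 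Both lie in $\OKtp$: the Newton sums of the monic integral polynomial $U_m$ are integral, and $W_m$ is well defined and integral since the hypotheses ($U_0\bmod p$ separable, $V_0(x_j^{(0)})\not\equiv 0\bmod p$) make $f_2$ invertible modulo $U_m$ in $\OKtzp$. So $M(t)$ and $F_m(t)$ are $K_0$-rational, which is the promised construction over $\Ktp$.

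\textbf{Existence.} Arguing as in Proposition~\ref{prop:Determinant}, $\det H(X_m(t))=\prod_{i<j}(x_j^{(m)}(t)-x_i^{(m)}(t))\big/\prod_j y_j^{(m)}(t)$ and $\det V(t)=\pm\prod_{i<j}(x_j^{(m)}(t)-x_i^{(m)}(t))$, so $\det M(t)$ equals, up to sign, $\big(\prod_{i<j}(x_j^{(m)}(t)-x_i^{(m)}(t))\big)^2\big/\prod_j y_j^{(m)}(t)$, a unit of $\OKt$ because the constant terms $x_j^{(0)}$ are pairwise distinct modulo $\pi$ (separability of $U_0\bmod p$) and $y_j^{(0)}=V_0(x_j^{(0)})$ is a unit. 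Hence $M(t)$ is invertible; putting $\eta(t):=M(t)^{-1}F_m(t)$ and $h_m(t,z):=\sum_l\eta_l(t)z^{l-1}$, a polynomial of degree $\le g-1$, and unwinding the previous paragraph gives $H(X_m(t))\cdot\big(h_m(t,x_j^{(m)}(t))\big)_j=M(t)\eta(t)=F_m(t)=H(X_m(t))\cdot(X_n(t)-X_m(t))$, so the invertibility of $H(X_m(t))$ over $\Kt$ forces $h_m(t,x_j^{(m)}(t))\equiv x_j^{(n)}(t)-x_j^{(m)}(t)\bmod t^{n+1}$.

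\textbf{Integrality, and the main difficulty.} It remains to see $h_m\in\OKt[z]$, and this is the delicate point: $F_m(t)$ is \emph{not} integral, since the integration defining it introduces denominators of $p$-valuation up to $\lfloor\log_p n\rfloor$, so integrality cannot be read off from $\eta(t)=M(t)^{-1}F_m(t)$. Instead one uses that $h_m$ is the unique polynomial of degree $\le g-1$ through the $g$ points $\{(x_j^{(m)}(t),\,x_j^{(n)}(t)-x_j^{(m)}(t))\}_j$ (the nodes have pairwise distinct constant terms), hence equals the Lagrange interpolant $\sum_j\big(x_j^{(n)}(t)-x_j^{(m)}(t)\big)\prod_{i\ne j}\frac{z-x_i^{(m)}(t)}{x_j^{(m)}(t)-x_i^{(m)}(t)}$; each factor $(x_j^{(m)}(t)-x_i^{(m)}(t))^{-1}$ is a unit of $\OKt$, so every Lagrange basis polynomial lies in $\OKt[z]$, while $x_j^{(n)}(t)-x_j^{(m)}(t)\in\OKt$ because $X_m$ and $X_n$ are truncations of the integral solution $X(t)\in\OKt^g$ (Proposition~\ref{prop:newtoncase1}). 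Therefore $h_m\in\OKt[z]$. A secondary routine check is that $h_m$, a priori defined over the splitting field $K$, has coefficients in $K$ (clear, either from the Lagrange form or from the $K_0$-rationality of $M(t)$ and $F_m(t)$), together with the short symmetric-function identity justifying the displayed formula for $M(t)_{i,l}$.
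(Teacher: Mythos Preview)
Your proof is correct and takes a genuinely different route from the paper's. The paper proceeds by an explicit Kaltofen-style construction: one packages the components $F_i^{(m)}$ of $F_m$ into a polynomial $D_m(t,z)$, multiplies by $U_m$, reads off the top $g$ coefficients as a polynomial $Q_m$, and sets
\[
h_m(t,z)=\dfrac{Q_m(t,z)}{\partial_z U_m(t,z)}\cdot V_m(t,z)\ \bmod\ (t^{n+1},U_m(t,z)),
\]
checking that this evaluates to $x_j^{(n)}-x_j^{(m)}$ at each node $x_j^{(m)}$. You instead rewrite the Newton step as a Hankel system $M(t)\eta(t)=F_m(t)$ over $K_0$ by multiplying $H(X_m)$ on the right by the Vandermonde of the nodes, prove $M(t)\in\mathrm{GL}_g(\OKtp)$ by computing its determinant, and then --- having correctly identified that $F_m$ is \emph{not} integral because of the integration --- recover integrality of $h_m$ by recognising it as the Lagrange interpolant with integral values $x_j^{(n)}-x_j^{(m)}$ (from $X_n\equiv X\bmod t^{n+1}$ and $X\in\OKt^g$) at nodes whose pairwise differences are units.

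What each approach buys: the paper's formula is the one fed directly into Algorithm~\ref{algo:Alternantsys} and gives a quasi-linear computation of $h_m$ over $K_0$ without ever passing to the splitting field; however, the paper does not spell out why the displayed formula lands in $\OKt[z]$, and indeed your observation that $F_m$ (hence $Q_m$) carries $p$-denominators shows this is not automatic from the formula alone. Your Lagrange argument supplies exactly that missing justification, while the Hankel reformulation ties the construction neatly to Proposition~\ref{prop:HankelproductHX}. One small slip: in your last sentence ``has coefficients in $K$'' should read $K_0$; this is immaterial since the proposition only claims $h_m\in\OKt[z]$.
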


\begin{proof}
We give a construction of $h_m$ using the approach in~\cite[Section~5]{Kaltofen}. Let $D_m(t,z)$ be the polynomial defined by 
\begin{equation*}
D_m(t,z) = x_1^{(m)}(t)z^g + x_2^{(m)}(t) z^{g-1} + \cdots + x_{g-1}^{(m)}(t) z^2 + x_g^{(m)}(t) z.
\end{equation*}
Write
\begin{equation*}
U_m(t,z)  \cdot D_m(t,z) = q_{2g}^{(m)}(t) z^{2g} + q_{2g-1}^{(m)}(t)z^{2g-1} + \cdots + q_1^{(m)}(t) z+ q_0^{(m)}(t) 
\end{equation*}
and 
\begin{equation*}
 Q_m(t,z)= q_{2g}^{(m)}(t) z^{g-1} + q_{2g-1 }^{(m)} (t)z^{g-2} + \cdots + q_{g+2}^{(m)}(t) z + q_{g+1}^{(m)}(t) .
\end{equation*}
For all $j = 1, \cdots , g$ we have the following relation
\begin{equation*}
x_j^{(n)}(t)- x_j^{(m)}(t)= \dfrac{Q_m(t,x_j^{(m)}(t))}{\partial _z U_m (t,x_j^{(m)}(t)) }\cdot y_j^{(m)}(t),
\end{equation*}
where $\partial _z U_m$ denotes the partial derivative of $U_m$ with respect to variable $z$. 
Hence, we take $h_m$ to be equal to  
\begin{equation*}
h_m (t,z) = \dfrac{Q_m(t,z)}{\partial _z U_m(t,z) }\cdot V_m(t,z) \mod  {(t^{n+1}, U_m(t,z))}.
\end{equation*}
\end{proof}

We end up with the computation of $U_n$ using a Newton scheme. Write $U_n = U_m +  T_m$, where $T_m \in t^{m+1}\Ktzp$. By Propostion~\ref{prop:interpolatingpolynomial}, the polynomial $U_n(t,z)$ can be constructed from the following relations
\begin{equation}
\label{eq:Uh}
U_n(t , h_m (t, x_j^{(m)}(t)) + x_j^{(m)}(t) ) \mod t^{n+1}= 0, \; \text{for all }j \in \{ 1, \ldots , g\}.
\end{equation}
Equation~\eqref{eq:Uh} can be rewritten as follows:
\begin{equation}
U_n(t ,  x_j^{(m)}(t)) + h_m(t,x_j^{(m)}(t)) \cdot  \partial _z U_n (t,x_j^{(m)}) \equiv  0 \mod t^{n+1}, \; \text{for all }j \in \{ 1, \ldots , g\}.
\end{equation}
Using the fact that $ U_m(t , x_j^{(m)}(t)) = 0$ for all $j \in \{ 1, \ldots , g\}$, we get 
\begin{equation}
T_m(t ,  x_j^{(m)}(t)) + h_m(t,x_j^{(m)}(t)) \cdot  \partial _z U_m (t,x_j^{(m)}) \equiv  0 \mod t^{n+1}, \; \text{for all }j \in \{ 1, \ldots , g\}.
\end{equation}
Repeating the above calculations in the reverse direction, we obtain the next proposition.
\begin{prop}
\label{prop:Un}
Let $m \geq 0$ be an integer and $n=2m+1$. Let $T_m \in t^{m+1} \Ktzp$ be the polynomial defined by 
\begin{equation*}
T_m (t,z)=  h_m(t,z) \cdot \partial _z U_m(t,z) \mod (t^{n+1}, U_m).
\end{equation*}
Then, $U_n = U_m + T_m$ is an approximation of $U(t,z)$ modulo $t^{n+1}$. 
\end{prop}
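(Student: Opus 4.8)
The plan is to read the chain of congruences displayed just above the statement in reverse, starting from the definition of $T_m$ and recovering the vanishing of $U_m+T_m$ at the Newton-updated roots. Let $X_n=(x_1^{(n)}(t),\dots,x_g^{(n)}(t))$ be the iterate obtained from $X_m$ through the Newton step~\eqref{eq:Newtoniteration}; by Corollary~\ref{cor:isogenyassumptions} and Proposition~\ref{prop:newtoncase1}, $X_n$ satisfies Equation~\eqref{eq:mainsystem} modulo $t^{n+1}$, so that $\prod_{j=1}^g (z-x_j^{(n)}(t))$ is an approximation of $U(t,z)$ modulo $t^{n+1}$. It therefore suffices to show
\begin{equation*}
U_m(t,z)+T_m(t,z)\equiv \prod_{j=1}^g (z-x_j^{(n)}(t))\pmod{t^{n+1}}.
\end{equation*}

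First I would record the valuation estimate that drives everything: since $X_m$ satisfies Equation~\eqref{eq:mainsystem} modulo $t^m$, the integrand $G-H(X_m)\cdot X_m'$ in~\eqref{eq:Newtoniteration} is divisible by $t^m$, whence $x_j^{(n)}(t)-x_j^{(m)}(t)\in t^{m+1}\Kt$ for every $j$. By Proposition~\ref{prop:interpolatingpolynomial} this difference equals $h_m(t,x_j^{(m)}(t))$ modulo $t^{n+1}$, so $h_m(t,x_j^{(m)}(t))\in t^{m+1}\Kt$ and, by interpolation at the roots of $U_m$, $T_m\in t^{m+1}\Ktzp$ as claimed; its coefficients in fact lie over $K_0$ because $T_m=\left(\prod_j (z-x_j^{(n)}(t))\right)-U_m$ is $\mathrm{Gal}(K/K_0)$-invariant, the Galois action permuting the $x_j^{(n)}$ consistently with the $x_j^{(m)}$. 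Now, because $2(m+1)=n+1$, the first-order Taylor expansion in $z$ of $U_m+T_m$ around $x_j^{(m)}(t)$ is exact modulo $t^{n+1}$: its quadratic remainder is an integral multiple of $(x_j^{(n)}(t)-x_j^{(m)}(t))^2\in t^{2(m+1)}\Kt=t^{n+1}\Kt$. Evaluating at $x_j^{(m)}(t)+h_m(t,x_j^{(m)}(t))=x_j^{(n)}(t)$, using $U_m(t,x_j^{(m)}(t))=0$, and discarding the further negligible term $h_m(t,x_j^{(m)}(t))\,\partial_z T_m(t,x_j^{(m)}(t))\in t^{2(m+1)}\Kt$, one gets
\begin{equation*}
(U_m+T_m)(t,x_j^{(n)}(t))\equiv T_m(t,x_j^{(m)}(t))+h_m(t,x_j^{(m)}(t))\,\partial_z U_m(t,x_j^{(m)}(t))\pmod{t^{n+1}},
\end{equation*}
and the defining congruence $T_m\equiv h_m\,\partial_z U_m\pmod{(t^{n+1},U_m)}$ makes the right-hand side vanish modulo $t^{n+1}$ for every $j$ (this is the point at which the sign in the statement must be matched with the one produced by Proposition~\ref{prop:interpolatingpolynomial}).

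Finally I would upgrade these $g$ pointwise vanishings into the polynomial identity. Since $U_m(0,z)$ is separable, the residues $x_1^{(m)}(0),\dots,x_g^{(m)}(0)$ — equivalently $x_1^{(n)}(0),\dots,x_g^{(n)}(0)$ — are pairwise distinct, so the Vandermonde matrix in the $x_j^{(n)}(t)$ is invertible over $\Kt/(t^{n+1})$; hence any polynomial in $z$ of degree $<g$ vanishing modulo $t^{n+1}$ at $x_1^{(n)}(t),\dots,x_g^{(n)}(t)$ is $\equiv 0\pmod{t^{n+1}}$. Applying this to $(U_m+T_m)-\prod_j (z-x_j^{(n)}(t))$, a difference of two monic polynomials of $z$-degree $g$ and hence of degree $<g$, yields $U_m+T_m\equiv\prod_j (z-x_j^{(n)}(t))\equiv U(t,z)\pmod{t^{n+1}}$. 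The step I expect to be the main obstacle is the valuation bound $x_j^{(n)}(t)-x_j^{(m)}(t)\in t^{m+1}\Kt$ together with the exactness of the first-order Taylor expansion it buys: it uses precisely the hypothesis that $X_m$ already satisfies Equation~\eqref{eq:mainsystem} modulo $t^m$, the quadratic convergence of the Newton operator of Proposition~\ref{prop:newtoncase1} being exactly what makes $2(m+1)$ reach $n+1$, so that a weaker initial accuracy would leave the remainder of size only $t^{2m}$ and break the argument. The remaining ingredients — invertibility of $\partial_z U_m$ modulo $(t,U_m)$, needed for $h_m$ to be well defined, and the $K_0$-rationality of $T_m$ — follow at once from the separability of $U_m(0,z)$ and from the identity $T_m=U_n-U_m$.
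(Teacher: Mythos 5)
Your proof is correct and follows essentially the same route as the paper: the chain of congruences displayed just before the statement (first-order Taylor expansion of $U_n$ at the roots of $U_m$, exact modulo $t^{n+1}$ because the Newton increments lie in $t^{m+1}\Kt$ and $2(m+1)=n+1$), read in reverse and made rigorous via the invertibility of the Vandermonde matrix at the pairwise-distinct residues of the roots. Your parenthetical remark about the sign is also on point: the paper's own derivation and Algorithm~\ref{algo:Alternantsys} produce $T_m \equiv -h_m\,\partial_z U_m \pmod{(t^{n+1},U_m)}$, so the plus sign in the displayed definition of $T_m$ in Proposition~\ref{prop:Un} is a typo that your argument correctly resolves.
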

\begin{cor}
Step~3 can be performed using at most $\softO(ng)$ operations in $K_0$.
\end{cor}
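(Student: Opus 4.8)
The plan is to turn Proposition~\ref{prop:Un} into a completely explicit sequence of fast polynomial operations and to bound the cost of each one. Recall that $U_n = U_m + T_m$ with $T_m = h_m\cdot\partial_z U_m \bmod (t^{n+1},U_m)$ and $h_m = \frac{Q_m}{\partial_z U_m}\,V_m \bmod (t^{n+1},U_m)$, where $Q_m$ is read off from the product $U_m(t,z)\cdot D_m(t,z)$, the polynomial $D_m$ is a costless rearrangement of the components of $F_m$ (available after Step~2), and $V_m = f_2\cdot W_m \bmod (t^{n+1},U_m)$ comes from the square root $W_m$ of $1/f_2$ modulo $(t^{n+1},U_m)$ already computed in Step~1. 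Hence every nontrivial operation takes place in the ring $\mathcal{R} := \Ktzp/(t^{n+1},U_m(t,z))$, a free $\Ktp/(t^{n+1})$-algebra of rank $g$.

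First I would check that $\partial_z U_m$ is invertible in $\mathcal{R}$: the discriminant of $U_m$ specializes at $t=0$ to the discriminant of $U_0$, which is a $p$-adic unit because $U_0\bmod p$ is separable; since $t$ is nilpotent in $\Ktp/(t^{n+1})$, this discriminant is a unit there, so $\partial_z U_m$ is invertible modulo $(t^{n+1},U_m)$ and its inverse has coefficients in $\OKpt$. Granting this, producing $U_n$ requires only finitely many of the following, each of cost $\softO(gn)$ operations in $K_0$: one truncated bivariate product $U_m\cdot D_m$ (to extract $Q_m$); one inversion of $\partial_z U_m$ modulo $U_m$ by the fast extended Euclidean algorithm; the multiplications $Q_m(\partial_z U_m)^{-1}$, then by $V_m$ (to get $h_m$), then by $\partial_z U_m$ (to get $T_m$), each followed by a Euclidean division by the monic polynomial $U_m$; and the addition $U_m + T_m$. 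Indeed, all polynomials involved have degree $O(g)$ in $z$ and coefficients in $\Ktp/(t^{n+1})$: a product of two of them costs $\softO(gn)$ operations in $K_0$ via Kronecker substitution and fast univariate multiplication, while a Euclidean division or an extended gcd modulo the degree-$g$ polynomial $U_m$ costs $\softO(g)$ operations in $\Ktp/(t^{n+1})$, i.e.\ $\softO(gn)$ operations in $K_0$. Summing the bounded number of steps proves the claim.

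I would also note that inside $\mathcal{R}$ one has $T_m \equiv h_m\cdot\partial_z U_m \equiv Q_m\cdot V_m$, so the inversion of $\partial_z U_m$ may be skipped and $T_m$ computed directly as $Q_m V_m \bmod (t^{n+1},U_m)$; this variant also makes it transparent that all intermediate polynomials stay in $\OKpt[z]$. The proof is otherwise pure bookkeeping: the single substantive point is the invertibility of $\partial_z U_m$ modulo $(t^{n+1},U_m)$, which is precisely where the hypothesis ``$U_0\bmod p$ separable'' is used.
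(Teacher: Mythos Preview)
Your proposal is correct and follows essentially the same approach as the paper: the paper's proof is simply ``This is a direct consequence of Propositions~\ref{prop:interpolatingpolynomial} and~\ref{prop:Un}'', and you have unpacked precisely those formulas into a bounded list of bivariate polynomial multiplications and Euclidean reductions in $\Ktzp/(t^{n+1},U_m)$, each costing $\softO(ng)$. Your observation that $T_m\equiv \pm Q_m V_m$ in $\mathcal R$, so that the inversion of $\partial_z U_m$ is never actually needed, is exactly what Algorithm~\ref{algo:Alternantsys} exploits in the line $T_m:=-Q_m\cdot V_m\bmod(t^{n+1},U_m)$; this is the cleanest route and also the one that keeps all intermediate data in $\OKpt[z]$.
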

\begin{proof}
This is a direct consequence of Propositions~\ref{prop:interpolatingpolynomial} and \ref{prop:Un}. 
\end{proof}

We summarize all the steps that we performed to solve Equation~\eqref{eq:mainsystem} in Algorithm~\ref{algo:Alternantsys}.

\begin{figure}
  \begin{center}
    \parbox{0.95\linewidth}{%
      \begin{footnotesize}\SetAlFnt{\small\sf}%
        \begin{algorithm}[H]%
          \caption{Alternant Differential System Solver} %
          \label{algo:Alternantsys}%
          \SetKwInOut{Input}{Input} %
          \SetKwInOut{Output}{Output} %
          \SetKwProg{AlternantSystem}{\tt AlternantSystem}{}{}%
          \AlternantSystem{$(G,f_2,U_0,V_0,n)$}{
            \Input{$G$ $\mod t^{n}$, $f_2$ $\mod t^{n}$, $U_0(z) = \prod \limits _{j=1}^g {(z-x_j^{(0)})}$ a separable polynomial, $V_0(z)$ such that $V_0(x_j^{(0)})= y_j^{(0)}$ for all $j = 1 , \ldots g$.}
            \Output{The polynomial $U \mod t^{n+1}$ whose roots form the solution of Equation~\eqref{eq:mainsystem}} \BlankLine %
            \If{$n =0$}
            {%
            \KwRet{$ U_0 \mod t$, $1/V_0 \mod (t,U_0)$}
            }%
            {}
            $ m := \lceil \frac{n-1}{2} \rceil$\;%
			$X_m, W_m:= $ \texttt{AlternantSystem}{$(G,f_2, U_0,V_0,m)$}\;%
            {}
            $W_m:=(W_m/2)\cdot (-f_2 \cdot W_m^2 +3) \mod (t^{n+1}, U_m)$\;%
            {}
            $ V_m := f_2 \cdot W_m \mod (t^{n+1}, U_m)$\;%
            {}
            $ s_1^{(m)} , \ldots , s_{2g-1}^{(m)} :=$ \texttt{NewtonSums}$(U_m, 2g-1 , n)$\;%
            {}
            \For{$j:=1$ to $2g-1$}
            {%
            $ r_i^{(m)}:= (ds_i^{(m)}/dt)/i$\;%
            }%
            {}
            $ Hp_1^{(m)}, \ldots , Hp_g^{(m)} := \texttt{HankelProd}((r_{i+j-1}^{(m)})_{i,j} , (w_0^{(m)}, \ldots ,w_{g-1}^{(m)} ) , n)$\; 
            {}
            \For{$i:=1$ to $g$}
            {%
            $ F_i^{(m)}=  \int (G_i - Hp_i^{(m)}) \, dt \mod t^{n+1}$\;%
            }%
            {}
            $D_m := F_1^{(m)}z^g + F_2^{(m)} z^{g-1} + \cdots + F_{g-1}^{(m)} z^2 + F_g^{(m)}z \mod t^{n+1}$\;%
            Write $U_m \cdot D_m = q_{2g}^{(m)} z^{2g} + q_{2g-1}^{(m)}z^{2g-1} + \cdots + q_1^{(m)} z+ q_0^{(m)}\mod t^{n+1}$\;%
            $Q_m:= q_{2g}^{(m)} z^{g-1} + q_{2g-1 }^{(m)} z^{g-2} + \cdots + q_{g+2}^{(m)} z + q_{g+1}^{(m)} \mod t^{n+1}$\;%
	    $T_m := -Q_m \cdot V_m \mod (t^{n+1}, U_m)$\;
          \KwRet{$U_m + T_m \mod t^{n+1}, W_m \mod t^{n+1}$}
          }
        \end{algorithm}
      \end{footnotesize}
    }
  \end{center}
\end{figure}

\begin{thm}
\label{thm:mainthmbis2}
Let $K_0$ be an unramified extension of $\mathbb{Q}_p$. Let $n, g \in \mathbb{N}$, $N \in \mathbb{N}^*$, $G \in \OKtp ^g$, $f_2 \in \OKtzp$ of degree $O(g)$, $U_0 =  \prod \limits _{j=1}^g {(z-x_j^{(0)})} \in \OKp[z]$, $V_0 \in \OKp [z]$ of degree $g-1$ such that $V_0(x_j^{(0)}) \mod p \neq 0$ and $U_0$ divides $f_2 - V_0^2$ in $\OKtzp$. We assume that $U_0$ is separable over the the residue field of $K_0$ and that the polynomial $U(t,z)$, whose roots form a solution of Equation~\eqref{eq:mainsystem}, have coefficients in $\OKtp$. Then, the procedure \texttt{AlternantSystem} runs with fixed point arithmetic at precision $O(p ^{M+ \lfloor \log _p (2g-1) \rfloor})$, with $M= \max (N,2) + \lfloor \log _ p(n) \rfloor$ if $p=3$ and $M = N + \lfloor \log _ p(n) \rfloor$ otherwise. All the computations are done in $\OKp$ and the result is correct at precision $O(p^N)$.

\end{thm}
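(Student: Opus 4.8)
The plan is to follow the proof of Theorem~\ref{thm:mainthmbis}. In exact arithmetic the correctness of \texttt{AlternantSystem} is already contained in Propositions~\ref{prop:complexitynewtonsums}, \ref{prop:HankelproductHX}, \ref{prop:interpolatingpolynomial} and \ref{prop:Un}: these show that the three steps of the procedure carry out one Newton iteration~\eqref{eq:Newtoniteration} on the first Mumford polynomial $U(t,z)$. Two things remain. First I would check that every intermediate quantity stays in $\OKp$; then I would redo the differential-precision analysis of Section~\ref{subsec:precisionanalysis}, this time for the polynomial $U$ instead of for the vector of roots $X$, keeping track of the single new source of precision loss.

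\textbf{Integrality.} I would go through the body of Algorithm~\ref{algo:Alternantsys} line by line. Since $U_0\bmod p$ is separable, $\mathrm{disc}(U_0)\in\OKp^{\times}$, hence $\mathrm{disc}(U_m)\in\OKp^{\times}$ for every approximation $U_m$ (its constant term in $t$ is $\mathrm{disc}(U_0)$); in particular the splitting field $K$ of $U_0$ is unramified over $K_0$, every reduction ``$\bmod(t^{n+1},U_m)$'' is harmless, and the Chinese remainder isomorphism $\OKtzp/(U_m)\simeq\prod_{j=1}^{g}\OKt$ over $K$ preserves Gauss norms. The hypotheses $U_0\mid f_2-V_0^{2}$ and $V_0(x_j^{(0)})\not\equiv 0\bmod p$ give $f_2(x_j^{(0)})=V_0(x_j^{(0)})^{2}\in\OK^{\times}$ for all $j$, so $f_2$ is invertible in $\OKtzp/(U_m)$ and the square root $W_m$ of $1/f_2$ modulo $U_m$, with the branch fixed by the base case of the recursion, exists and is integral; since $p$ is odd the square-root Newton step $W\mapsto(W/2)(3-f_2W^{2})$ keeps $W_m$ integral. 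The Newton sums $s_i^{(m)}$ are integral by Proposition~\ref{prop:complexitynewtonsums}, and $r_i^{(m)}=\tfrac{1}{i}\,\tfrac{ds_i^{(m)}}{dt}$ is integral because $\tfrac{ds_i^{(m)}}{dt}$ equals $i$ times the integral series $\sum_{j}\tfrac{dx_j^{(m)}}{dt}\,{x_j^{(m)}}^{\,i-1}$; this is the only division by an integer in the algorithm. All remaining operations --- the Hankel product (Algorithm~\ref{algo:HankelProduct}), the polynomial products defining $D_m$, $Q_m$, $V_m$ and $T_m$, and the integrations $F_i^{(m)}=\int(G_i-Hp_i^{(m)})\,dt$ --- keep everything in $\OKp$; note that the division by $\partial_zU_m$ occurring in Proposition~\ref{prop:interpolatingpolynomial} cancels in the final formula $T_m=-Q_mV_m\bmod(t^{n+1},U_m)$.

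\textbf{Precision.} For the precision statement I would reuse Lemma~\ref{lem:isometry} and Proposition~\ref{prop:precisionlemma}: the solution map $G\mapsto X_n(G)$ of Equation~\eqref{eq:mainsystem} has isometric differential and satisfies $X_n(G+B_{E_n}(\delta))=X_n(G)+B_{F_n}(\delta)$ whenever $\delta<p^{-2/(p-1)}$. Because $\mathrm{disc}(U_0)$ is a unit, passing from a monic degree-$g$ polynomial over $\OKtp$ to the multiset of its roots in $\OKt$, and back through the elementary symmetric functions, are norm-preserving, so the same stability holds for $U_n(G)=\prod_{j}(z-x_j(G))$. Then, exactly as in the proof of Theorem~\ref{thm:mainthmbis}, an induction on $n=2m+1$ would show that the returned $U_n$ is the first Mumford polynomial of the exact solution of Equation~\eqref{eq:mainsystem} for a perturbed datum $G_n$ with $\Vert G-G_n\Vert_{F_n}\le p^{-M}$: inside one Newton step, the only operation eating into the working precision $O(p^{M+\lfloor\log_p(2g-1)\rfloor})$ is the division of $\tfrac{ds_i^{(m)}}{dt}$ by $i\le 2g-1$, which costs $\upsilon_p(i)\le\lfloor\log_p(2g-1)\rfloor$ digits --- precisely the slack built into the input precision --- while every other operation is exact over $\OKp$ and the algebraic identities of the Newton scheme preserve the residue modulo $p^M$, just as they do for the $t$-integrations in Theorem~\ref{thm:mainthmbis}. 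Passing from $\Vert\cdot\Vert_{F_n}$ to $\Vert\cdot\Vert_{E_n}$ (the $\Vert\cdot\Vert_{F_n}$-norm of the antiderivative) multiplies the bound by at most $p^{\lfloor\log_p(n)\rfloor}$, so $\Vert G-G_n\Vert_{E_n}\le p^{-M+\lfloor\log_p(n)\rfloor}$; under the stated choice of $M$ this is $<p^{-2/(p-1)}$ (for $p=3$, $2/(p-1)=1$ and $M-\lfloor\log_p(n)\rfloor=\max(N,2)\ge 2$; for $p\ge 5$, $2/(p-1)<1$ and $M-\lfloor\log_p(n)\rfloor=N\ge 1$). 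Proposition~\ref{prop:precisionlemma}, transported to $U$, then gives $U_n=U_n(G)\bmod(t^{n+1},p^{N})$, which is the assertion.

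\textbf{Main obstacle.} The hard part is the precision bookkeeping, not the integrality. Two points need care: that the passage through \texttt{NewtonSums}, the $r_i^{(m)}$, the Hankel product and the Kaltofen-style interpolation really hides no division by a non-unit beyond the divisions by $i\le 2g-1$ --- so that the square-root iteration for $W_m$ and the power-series inversion inside \texttt{NewtonSums} converge with no loss, which ultimately rests on the separability of $U_0\bmod p$ and on $V_0(x_j^{(0)})\not\equiv 0\bmod p$ --- and that this extra loss of $\lfloor\log_p(2g-1)\rfloor$ digits is incurred once and does not accumulate along the $O(\log n)$ Newton steps, which is what the lattice-precision formalism of \cite{carova14,carova15} guarantees, exactly as in Section~\ref{subsec:precisionanalysis}.
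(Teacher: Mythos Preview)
Your proposal is correct and follows exactly the route the paper indicates: its proof of Theorem~\ref{thm:mainthmbis2} is the single sentence ``The proof is similar to the proof of Theorem~\ref{thm:mainthmbis},'' and you have carried out precisely that adaptation---the line-by-line integrality check, the identification of the extra $\lfloor\log_p(2g-1)\rfloor$ digits lost when forming $r_i^{(m)}=(ds_i^{(m)}/dt)/i$, and the non-accumulation of this loss via the invariant $H(X_n)\cdot X_n'\equiv G\pmod{(t^n,p^M)}$ maintained along the Newton iteration.
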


\begin{proof}
The proof is similar to the proof of Theorem~\ref{thm:mainthmbis}. 
\end{proof}

\begin{prop}
\label{prop:complexityalternant}
When performed with fixed point arithmetic at precision $O(p^M)$, the bit complexity of Algorithm~\ref{algo:Alternantsys} is $\softO \left( ng \cdot \cA(K_0;M) \right)$, where $\cA (K_0;M)$ denotes an upper bound on the bit complexity of the arithmetic operations in $\OKp / p^M \OKp$.
\end{prop}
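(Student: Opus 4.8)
The plan is to reduce the statement, exactly as in the proof of Proposition~\ref{prop:complexity}, to a single-level cost estimate together with an unrolling of the recursion, but now with the sharp per-level bound $\softO(ng)$ in place of $O(\cMM(g,n)+\cC_{H_\mathrm{f}}(n))$. First I would bound the number of operations in $K_0$ performed by one call \texttt{AlternantSystem}$(G,f_2,U_0,V_0,n)$, not counting the recursive call. Every line of the body is either a constant number of bivariate multiplications modulo $(t^{n+1},U_m(t,z))$ --- with $U_m$ monic of degree $g$ in $z$ and $f_2$ of degree $O(g)$, so all operands have $t$-degree $O(n)$ and $z$-degree $O(g)$, and each such product, followed by Euclidean reduction modulo $U_m$, costs $\softO(ng)$ operations in $K_0$ (the estimate used in the proof of Proposition~\ref{prop:complexitynewtonsums}) --- or a call to a subroutine already analysed. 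Concretely: the Newton update $W_m\leftarrow(W_m/2)(3-f_2W_m^2)$, the evaluation $V_m=f_2W_m$, the formation of $D_m$, the product $U_m\cdot D_m$ and the extraction of $Q_m$ from it, and the product $Q_m\cdot V_m$ are all $\softO(ng)$; the call \texttt{NewtonSums}$(U_m,2g-1,n)$ is $\softO(ng)$ by Proposition~\ref{prop:complexitynewtonsums}; the loop producing the $r_i^{(m)}$ is $2g-1$ differentiations and scalar divisions modulo $t^{n+1}$, hence $\softO(ng)$; \texttt{HankelProd} is $\softO(ng)$ by Proposition~\ref{prop:Hakelcomplexity}; and the loop producing the $F_i^{(m)}$ is $g$ integrations modulo $t^{n+1}$. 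These are precisely steps~1, 2, 3 of Section~\ref{subsec:Alternantdifferentialsystem}, each already shown to cost $\softO(ng)$, so one call costs $\softO(ng)$ operations in $K_0$ beyond the recursive call with parameter $m=\lceil(n-1)/2\rceil$.

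Next I would write the recursion $\cD(n)\le\cD(\lceil(n-1)/2\rceil)+\softO(ng)$ for the algebraic complexity $\cD(n)$ of \texttt{AlternantSystem}, observe that $g$ is fixed along the recursion and its depth is $O(\log n)$, and unroll it into $\sum_{k\ge0}\softO\!\big((n/2^k)\,g\big)$. The geometric decay in the size parameter makes this sum converge, so $\cD(n)=\softO(ng)$. To pass to bit complexity I would invoke Theorem~\ref{thm:mainthmbis2}: all arithmetic is performed in $\OKp/p^{M'}\OKp$ with $M'=M+\lfloor\log_p(2g-1)\rfloor=M+O(\log_p g)$, and raising the working precision by $O(\log_p g)$ multiplies the cost of one ring operation only by a factor absorbed in the $\softO$, so each operation costs $\softO(\cA(K_0;M))$ bit operations; multiplying through by $\cA(K_0;M)$ gives the bit complexity $\softO(ng\cdot\cA(K_0;M))$.

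I do not expect a genuine obstacle here, since almost all of the content sits in the propositions already proved; the only points that need care in the write-up are (i) verifying that each reduction ``$\bmod\ (t^{n+1},U_m)$'' indeed leaves the operands at $z$-degree $O(g)$, so that no step secretly costs $\softO(ng^2)$ --- this uses $U_m$ monic of degree $g$ and $f_2$ of degree $O(g)$ --- and (ii) making the $\softO$-bookkeeping in the recursion honest, i.e.\ bounding $\softO(ng)$ by $c\,ng\log^a(ng)$, summing over the $O(\log n)$ levels, and noting the log factors contribute only one extra logarithm, still absorbed in $\softO$.
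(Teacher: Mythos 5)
Your proposal is correct and follows essentially the same route as the paper: the paper's proof consists exactly of the recurrence $\cD(n)\le\cD(\lceil (n-1)/2\rceil)+\softO(ng)$, its resolution to $\cD(n)=\softO(ng)$, and the multiplication by $\cA(K_0;M)$ to pass to bit complexity. Your write-up merely makes explicit the per-line cost accounting (via Propositions~\ref{prop:complexitynewtonsums} and~\ref{prop:Hakelcomplexity} and the $z$-degree-$O(g)$ reductions modulo $U_m$) that the paper leaves implicit.
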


\begin{proof}
Let $\cD$ denote the algebraic complexity of Algorithm~\ref{algo:Alternantsys}, then we have the following relation
\begin{equation*}
\cD (n)  \leq  \cD \left( \left\lceil \dfrac{n-1}{2} \right\rceil \right) + \softO \left( n g \right).
\end{equation*}
Solving the recurrence, we find $ \cD (n) = \softO \left( ng \right)$. Therefore, the bit complexity of Algoritmh~\ref{algo:Alternantsys} is $\softO \left ( ng \cdot \cA(K_0;M) \right )$. 
\end{proof}

 \section{Fast computation of the multiplication by-$\ell$ maps}
\label{subsec:Multbyell}

Thanks to Algorithm~\ref{algo:Alternantsys}, we now have fast algorithms for computing rational representations of separable isogenies between Jacobians of hyperelliptic curves defined over fields of odd characteristic, after having possibly lifted the two curves and the normalization matrix to the $p$-adics. In this section, we are interested in the computation of a rational representation of the multiplication by an integer. It is therefore necessary to know in advance some bounds on the degrees of its components. 

It has been proved that the degrees of the components of a rational representation of the multiplication-by-$\ell$ are bounded by $O(\ell ^2)$, only for curves of genus $2$ and $3$~\cite[Chapter~4]{abela18}. In the general case, it has been shown that these degrees are bounded by $O_g(\ell ^3)$\footnote{the notation $O_g$ means that we are hiding the terms that depend on $g$}~\cite[Theorem~4.13]{abela18}, although experiments show that they are only quadratic in $\ell$. 

In Section~\ref{subsec:cantordivisionpolynomials}, we use the results of Section~\ref{subsec:RationalRepresentation} to reduce the bound to $O(g\ell^2)$.  Consequently, we derive from Algorithm~\ref{algo:Alternantsys} a quasi-optimal algorithm to compute a rational representation of the multiplication by an integer.

\subsection{Cantor $\ell$-division polynomials}
\label{subsec:cantordivisionpolynomials}
Let $C : y^2 = f(x)$ be a hyperelliptic curve of genus $g$ over a finite field $k$ and $\ell >g$ an integer coprime to the characteristic of $k$.\\
Let $P \in C(k)$. For a generic point $Q=(x,y)$ on $C$, the Mumford representation of the element $\ell [Q - P]$ in the Jacobian of $C$ can be written as follows
$$
\ell [Q- P] = \left ( X^g + \sum \limits _{i=1}^{g-1} {\dfrac{d_i(x)}{d_g(x)}} X^i, \,   y \sum \limits _{i=1}^{g-1} {\dfrac{e_i(x)}{e_g(x)}} X^i \right ),
$$
where the numerators $d_0, \ldots , d_{g-1}, e_0, \ldots , e_{g-1}$ are polynomials in $k[x]$ and the denominators $d_g$ and $e_g$ are monic polynomials in $k[x]$. Therefore, $\left ( \frac{d_0}{d_g}, \ldots ,  \frac{d_{g-1}}{d_g} ,  \frac{e_0}{e_g}, \ldots ,  \frac{e_{g-1}}{e_g} \right )$ is a rational representation of the multiplication-by-$\ell$ map.
\begin{defi}
The $2g + 2$ polynomials $d_0, \ldots , d_g, e_0, \ldots e_g$ are called Cantor’s $\ell$-division polynomials.
\end{defi}
Since the multiplication-by-$\ell$ endomorphism is a separable $(\ell^2 , \ldots , \ell ^2)$-isogeny, we can then apply Propositions~\ref{prop:degreerationalfractions} and \ref{prop:degreegl} and Remark~\ref{remark:degreegeneralbound} in order to obtain bounds on the degrees of the Cantor’s $\ell$-division polynomials. This gives the following result.

\begin{prop}
\label{prop:degreeboundmult}
The degrees of the polynomials $d_0 , \ldots, d_g$ are bounded by $g\ell^2$. Moreover,
\begin{itemize}
\item if $P$ is a Weierstrass point, then the degrees of $e_0 , \ldots, e_g$ are bounded by
$$
\left \{  \begin{array}{cc}
\frac{3}{2} g \ell^2 + g + 1 & \text{if } \deg (f) = 2g+1\\
2 g \ell^2 + g + 1 & \text{otherwise}
\end{array} \right.
$$
\item if $P$ is not a Weierstrass point, then the degrees of $e_0 , \ldots, e_g$ are bounded by
$$
\left \{  \begin{array}{cc}
3g \ell^2 + g + 1 & \text{if } \deg (f) = 2g+1\\
4 g \ell^2 + g + 1 & \text{otherwise}
\end{array} \right.
$$

\end{itemize}
\end{prop}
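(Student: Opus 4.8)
The plan is to recognise the multiplication-by-$\ell$ map as a special case of the isogenies studied in Section~\ref{subsec:RationalRepresentation} and to feed it into the degree bounds established there. The one substantive point is that $[\ell]\colon J(C)\to J(C)$ is a \emph{separable} $(\ell^{2},\dots,\ell^{2})$-isogeny in the sense of Definition~\ref{def:Isogeny}: separability holds because $\ell$ is prime to $p=\mathrm{char}(k)$, while the relation $[\ell]^{\ast}\Theta\equiv\ell^{2}\Theta$ (algebraic equivalence), valid because the theta divisor $\Theta$ of $J(C)$ is symmetric up to translation, shows that $[\ell]$ pulls $\mathcal{L}_{J(C)}^{1}$ back to $\mathcal{L}_{J(C)}^{\ell^{2}}$. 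Once this is in place, the intersection-theoretic argument in the proof of Proposition~\ref{prop:degreegl} goes through verbatim after substituting $|\ker([\ell])|=\deg([\ell])=\ell^{2g}$ and $[\ell]^{\ast}\Theta\equiv\ell^{2}\Theta$ in place of $\ell^{g}$ and $\ell\Theta$, and using Lemmas~\ref{lem:polardivisor} and \ref{lem:Theta} exactly as before; it yields $I_P(C)\cdot\Theta=g\ell^{2}$ and hence, for the rational representation $(\sigma_{1},\dots,\sigma_{g},\rho_{1},\dots,\rho_{g})$ of $[\ell]$ attached to the base point $P$, that $\deg(\sigma_{i})\le 2g\ell^{2}$ for every $i$, that $\deg(\rho_{i})\le 3g\ell^{2}$ if $\deg(f)=2g+1$, and that $\deg(\rho_{i})\le 4g\ell^{2}$ otherwise.

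Second, I would convert these degrees of functions on $C$ into degrees of the polynomials $d_{i},e_{i}$ by unwinding the Mumford dictionary: in $U(X)=X^{g}+\sum_{j}\sigma_{j}X^{g-j}$ and $V(X)=\sum_{j}\rho_{j}X^{g-j}$ one has $\sigma_{g-i}=d_{i}/d_{g}$ and $\rho_{g-i}/y=e_{i}/e_{g}$, so $d_{g}$ and $e_{g}$ are the monic common denominators. When $P$ is a Weierstrass point, Proposition~\ref{prop:degreerationalfractions} writes each $\sigma_{i}$ as a rational fraction in $x$ of degree at most $\deg(\sigma_{1})/2\le g\ell^{2}$ and each $\rho_{i}/y$ as a rational fraction in $x$ of degree at most $\deg(\rho_{i})/2+g+1$; clearing denominators yields $\deg(d_{i}),\deg(d_{g})\le g\ell^{2}$ together with the bounds of the first bullet. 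When $P$ is not a Weierstrass point, one argues in the same way but replaces the degree-halving by Remark~\ref{remark:degreegeneralbound}, applied to the rational representations attached to $P$ and to $\bar P$: this turns $\deg(\rho_{i})/2$ into $\deg(\rho_{i})$ and produces the bounds of the second bullet, the extra $+g+1$ in every $e_{i}$ bound being the contribution of $\deg(v)\le 2g+2$ in Lemma~\ref{lem:bornes}.

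I expect essentially all the real content to sit in the first paragraph — pinning down that $[\ell]$ is an $(\ell^{2},\dots,\ell^{2})$-isogeny and checking that the computation of Proposition~\ref{prop:degreegl} is insensitive to this substitution. Everything after that is bookkeeping, and the place to be careful is precisely that: keeping the index shift $j\leftrightarrow g-i$ consistent between the $(\sigma,\rho)$ and $(d,e)$ presentations, and applying the hyperelliptic-involution factor $\tfrac12$ in the Weierstrass case while dropping it — via Remark~\ref{remark:degreegeneralbound} — in the non-Weierstrass case, so that the two bullets come out with the stated constants.
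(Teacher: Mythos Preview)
Your proposal is correct and follows essentially the same route as the paper, which simply notes that $[\ell]$ is a separable $(\ell^{2},\dots,\ell^{2})$-isogeny and then invokes Propositions~\ref{prop:degreerationalfractions} and~\ref{prop:degreegl} together with Remark~\ref{remark:degreegeneralbound}. Your additional justification of the $(\ell^{2},\dots,\ell^{2})$-type via $[\ell]^{\ast}\Theta\equiv\ell^{2}\Theta$ and your explicit unwinding of the $(\sigma,\rho)\leftrightarrow(d,e)$ dictionary are elaborations the paper leaves implicit, but the underlying argument is the same; note also that you need not redo the intersection computation of Proposition~\ref{prop:degreegl}, since that proposition already applies to an arbitrary $(\ell,\dots,\ell)$-isogeny and hence directly to $[\ell]$ upon replacing $\ell$ by $\ell^{2}$.
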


\begin{remark}
The bounds obtained in Propostion~\ref{prop:degreeboundmult} are not optimal. In fact, the experiments carried out by Abelard in his thesis~\cite[{Section~4.2}]{abela18} show that Cantor $\ell$-division polynomials have degrees slightly smaller than the bounds that we have obtained in Propostion~\ref{prop:degreeboundmult}.
\end{remark}
The next theorem and its proof give an efficient algorithm to compute Cantor's division polynomials. 

 \begin{thm}
 \label{thm:multiplicationbyellmap}
 Let $p$ an odd prime number and $g>1$ an integer. Let $\ell$ be an integer greater than $g$ and coprime to $p$. Let $C:y^2 = f(x)$ be a hyperelliptic curve of genus $g$ defined over a finite field $k$ of odd characteristic $p$. There exists an algorithm that computes Cantor $\ell$-division polynomials of $C$, performing at most $\softO(\ell^2  g^2 )$ operations in $k$.
 \end{thm}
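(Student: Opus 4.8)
The plan is to realize the multiplication-by-$\ell$ endomorphism $[\ell]$ of $J(C)$ as a separable $(\ell^2,\ldots,\ell^2)$-isogeny, to lift the whole situation to the $p$-adics, to feed the associated alternant differential system to Algorithm~\ref{algo:Alternantsys}, and finally to recover the Cantor polynomials from the power-series solution by rational-fraction reconstruction. Since $g>1$ we are genuinely in the hyperelliptic case, so \texttt{AlternantSystem} applies. First I would let $K_0$ be the unramified extension of $\mathbb{Q}_p$ with residue field $k$ and lift $f$ to $\tilde f\in\OKp[x]$ of the same degree; as $f$ is separable so is $\tilde f$, hence $C_0:y^2=\tilde f(x)$ has good reduction $C$ and the rational representation of $[\ell]$ on $J(C_0)$ reduces modulo $p$ to that of $[\ell]$ on $J(C)$. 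I would fix a lift $P$ of a $k$-rational point of $C$, taking a Weierstrass point when one exists over $k$ and otherwise appealing to Remark~\ref{remark:degreegeneralbound}. Next I would choose a value $u_Q$ --- in $k$, or in an extension $k'$ with $[k':k]=O(\log(g\ell))$ when $|k|$ is too small --- avoiding the $O(g\ell^2)$ ``bad'' reductions (poles of the $\sigma_i$ and $\rho_i$, zeros of $\tilde f$, and values making the initial Mumford polynomial inseparable or some $y_j^{(0)}$ zero); lift $u_Q$, adjoin $v_Q=\sqrt{\tilde f(u_Q)}$ if necessary (an unramified extension of degree $\le 2$), and set $u(t)=u_Q+t$, $v(t)=\sqrt{\tilde f(u(t))}\in\OKtp$. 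All these extensions cost only a $\softO(1)$ factor, so the complexity can still be counted in operations over $k$. Finally I would compute the initial datum $(U_0,V_0)$, the Mumford representation of $\ell[Q-P]\in J(C_0)$, by $O(\log\ell)$ steps of Cantor's addition algorithm~\cite{Cantor}, for a negligible $\softO(g\log\ell)$ operations; by construction $U_0$ is separable modulo $p$, $V_0(x_j^{(0)})\not\equiv 0\pmod p$, and $U_0\mid \tilde f-V_0^2$, and regularity of the $\sigma_i,\rho_i$ at the reduction of $u_Q$ forces the solution $U(t,z)$ to have coefficients in $\OKtp$, so all hypotheses of Theorem~\ref{thm:mainthmbis2} hold.

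\textbf{Running the solver.} The key simplification compared with an arbitrary isogeny is that the normalization matrix is here completely explicit: $[\ell]^*$ acts as multiplication by $\ell$ on the invariant differentials of $J(C_0)$, and $j_P^*$ is, by the very choice of the bases $B_1,B_2$ of Section~\ref{subsec:diffeq}, the canonical isomorphism, so the normalization matrix of $F_P=[\ell]\circ j_P$ equals $\ell\cdot\mathrm{Id}$ and $G(t)=\frac{\ell}{v(t)}\bigl(1,u(t),\ldots,u(t)^{g-1}\bigr)$, computed modulo $t^{n+1}$ in $\softO(ng)$ operations in $\OKp$ with no modular equation or theta-function evaluation. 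I would take $n$ equal to the degree bound $O(g\ell^2)$ of Proposition~\ref{prop:degreeboundmult} and $N=1$. By Theorem~\ref{thm:mainthmbis2}, \texttt{AlternantSystem}$(G,\tilde f,U_0,V_0,n)$ returns $U(t,z)\in\OKtp[z]$ together with $W(t,z)$ correct modulo $(t^{n+1},p)$, working at $p$-adic precision $O\bigl(p^{M'}\bigr)$ with $M'=O(\log_p(g\ell))$, which is absorbed by the soft-O notation; by Proposition~\ref{prop:complexityalternant} the cost is $\softO(ng)=\softO(g^2\ell^2)$ operations in $\OKp/p^{M'}\OKp$, and since $M'=\softO(1)$ each such operation is $\softO(1)$ operations in $k$, giving $\softO(g^2\ell^2)$ operations in $k$.

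\textbf{Reconstruction.} From the output one recovers the second Mumford coordinate $V(t,z)=\tilde f(z)\,W(t,z)\bmod (t^{n+1},U(t,z))$, of degree $g-1$ in $z$ and satisfying $V(t,x_j(t))=y_j(t)$. The non-leading coefficients of $U(t,z)$ are the power-series expansions in $t=u-u_Q$ of the fractions $d_i/d_g$, which by Proposition~\ref{prop:degreeboundmult} are rational in $u$ of degree $\le g\ell^2$; dividing the coefficients of $V(t,z)$ by $v(t)$ (we already have $1/v(t)\bmod t^{n+1}$) gives the expansions of the fractions $e_i/e_g$, rational in $u$ of degree $O(g\ell^2)$. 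Since $n$ exceeds twice all these degrees, reducing the series modulo $p$ and applying fast rational-fraction reconstruction over $k$ via the extended Euclidean algorithm~\cite{Bostan17} recovers each $d_i/d_g$ and $e_i/e_g$ in $\softO(n)=\softO(g\ell^2)$ operations; clearing denominators yields the $2g+2$ Cantor $\ell$-division polynomials of $C$, for a total of $\softO(g^2\ell^2)$ operations in $k$ over all $2g+2$ of them. Summing the contributions of all steps gives the claimed bound $\softO(\ell^2g^2)$.

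\textbf{Main obstacle.} The bulk of the argument is thus an assembly of results already in hand; the point requiring the most care is the bookkeeping around the generic base point $Q$: one must check that $Q$ --- together with $v_Q$ --- can be found in a field extension of degree $\softO(1)$ over $k$ so that the hypotheses of Theorem~\ref{thm:mainthmbis2} (separability and non-vanishing modulo $p$, coefficients of $U(t,z)$ in $\OKtp$) are genuinely met, while the single parameter $n=O(g\ell^2)$ supplied by Proposition~\ref{prop:degreeboundmult} is simultaneously large enough for the rational reconstruction and small enough to keep the $p$-adic precision, and hence the running time, at $\softO(g^2\ell^2)$.
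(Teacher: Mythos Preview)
Your argument is correct and follows the paper's own proof closely: lift to the unramified extension $K_0$, observe that the normalization matrix for $[\ell]$ is $\ell\cdot\mathrm{Id}$, call \texttt{AlternantSystem} with $n=O(g\ell^2)$ and $N=1$, and reconstruct the Cantor polynomials by Pad\'e approximation. You are in fact more careful than the paper in handling the auxiliary issues (absence of a rational Weierstrass point, small $|k|$, and adjunction of $v_Q$), which the paper sweeps under ``for the sake of simplicity''.

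The one genuine divergence is in how the $e_i$'s are recovered. You propose to read $V(t,z)=\tilde f(z)\,W(t,z)\bmod(t^{n+1},U(t,z))$ directly from the solver's by-product $W$ and then reconstruct the $e_i/e_g$. The paper instead first recovers the $d_i$'s from $U(t,z)$ at precision $n=2g\ell^2$, uses them to \emph{re-expand} $U(t,z)$ to the higher $t$-adic precision $2\deg(e_0)$ (now cheap, since the $d_i$ are known exactly), and only then computes $V$ by a fresh Newton iteration modulo this extended $U$. Your route is perfectly viable and has the same asymptotic cost provided you take $n$ large enough for both reconstructions, but note a small precision mismatch: the $W$ returned by \texttt{AlternantSystem} at level $n$ is computed \emph{modulo $U_m$} (the penultimate approximation), not modulo the final $U$, so $V(t,x_j(t))\equiv y_j(t)$ only $\bmod\ t^{m+1}$, i.e.\ at half the precision you claim. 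One extra Newton step on $W$ modulo the final $U$ fixes this at cost $\softO(ng)$, and then your argument goes through.
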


 \begin{proof}
 Let $p$ be the characteristic of $k$ and $d=[k : \mathbb F_p]$. For the sake of simplicity, we will assume that $C$ admits a Weierstrass point over $k$. The algorithm performs the following steps:
 \begin{enumerate}
 \item Pick a Weierstrass point $P \in C(k)$.  
 \item Chose a point $Q\in C(k)$ different from $P$ such that $\ell [ Q - P]$ is generic.
 \item Lift $C$ arbitrarily as $\widetilde{C} : y^2= \tilde{f}(x) $ over an unramified extension $K_0$ of $\mathbb{Q}_p$ of degree $d$ with a $p$-adic precision equal to $1+ \lfloor \log _p (2g \ell^2) \rfloor + \lfloor \log _p (2g-1) \rfloor $.
 \item Lift $P$ as $\tilde{P}$ and $Q$ as $\tilde{Q}$ over $K_0$ such that $\tilde{P}, \tilde{Q} \in \tilde{C}(K_0)$.

 \item Solve the differential equation~\eqref{eq:mainsystem} by applying  Algorithm~\ref{algo:Alternantsys} to the following input:
 \begin{itemize}
 \item $n = 2g \ell^2$,
 \item $f_2  = \tilde{f}$,
 \item $U_0(z)$: the first Mumford coordinate of $\ell [ \tilde{Q} - \tilde{P}]$,
\item $V_0(z)$: the second Mumford coordinate of $\ell [ \tilde{Q} - \tilde{P}]$,
\item $G(t)$, given by the following relation 
\begin{equation*}
G = \dfrac{\ell}{v(t)} \begin{pmatrix} 1 \\ u(t) \\  u(t)^2 \\ \vdots \\ u(t)^{g-1} \end{pmatrix},
\end{equation*}
 where $u(t) = t + x_{\tilde{Q}}$ and $v(t) = \sqrt{f(u(t))}$ such that $v(0) = y_{\tilde{Q}}$.
 \end{itemize}
 Let $U(t,z)$ be the reduction of the output of the algorithm in $k$.
\item Reconstruct from $U(t,z)$ the $g+1$ polynomials $d_0, \ldots, d_{g}$.
\item Recover the polynomials $e_0, \ldots , e_g$ from $d_0, \ldots, d_{g}$ and the equation of $C$.
 \end{enumerate}
 The time complexity of the algorithm depends mainly on the complexity of steps~5,6 and 7. According to Proposition~\ref{prop:complexityalternant}, step~5 can be carried out for a cost of $\softO(g^2  \ell^2 )$ operations in $k$. The $g+1$ polynomials $d_0, \ldots, d_{g}$ are obtained by reconstructing (for example) $d_0/d_g$ using Pad\'e approximants from the constant coefficient of $U$ then multiplying the other coefficients of $U$ by $d_g$ to recover $d_1, \ldots , d_{g-1}$. Therefore, step~6 requires $\softO(g^2 \ell ^2 )$ operations in $k$ as well. Step~7 is executed as follows: we make use of the polynomials  $d_0, \ldots, d_{g}$ to increase the $t$-adic approximation of the polynomial $U(t,z)$ to $2\deg (e_0)$. We compute, using a Newton iteration, the degree $g$ polynomial $V(t,z)$ such that $V(0,z) = V_0(z)$ and $$V(z,t)^2 \equiv f(z) \mod (t^{2\deg(e_0)}, U(t,z)).$$  We reconstruct (for example) the rational fraction $e_0/e_g$ from the constant coefficient of $V$. The polynomials $e_1 , \ldots , e_{g-1}$ are obtained by multiplying $e_g$ with the non-constant coefficients of $V$. This can also be carried out using $\softO(g^2\ell^2)$ operations in $k$.
 \end{proof}

\subsection{Experiments}
\label{subsec:experimentsmult}
We made an implementation of both Algorithm~\ref{algo:Alternantsys} and the Padé approximant
step using the \textsc{half-gcd} algorithm given in~\cite{thome03} with the \textsc{magma} computer algebra
system~\cite{magma} to compute Cantor $\ell$-division polynomials in $\mathbb{F}_5$ for hyperelliptic curves. Our implementation is available at~\cite{github} . 
Timings are detailed in Figures~\ref{fig:timingsCantor} and~\ref{fig:timingsg}. All the calculations were done in the ring $\mathbb{Z}_5$ with a fixed precision which is equal to $1 + \lfloor \log _5 (2g \ell^2)  \rfloor + \lfloor \log _5(2g-1) \rfloor.$
The observed timings fit rather well with the expected time complexity, which is $\softO(\ell ^2 g^2  )$: Figure~\ref{fig:timingsg} (resp. Figure~\ref{fig:timingsCantor}) shows that the time complexity of our algorithm is almost linear in $g^2$ (resp. in $\ell^2$).

\begin{figure*}[http]

    \centering
    \begin{subfigure}[t]{0.5\textwidth}
        \centering
        \includegraphics[width=.62\textwidth, angle=-90]{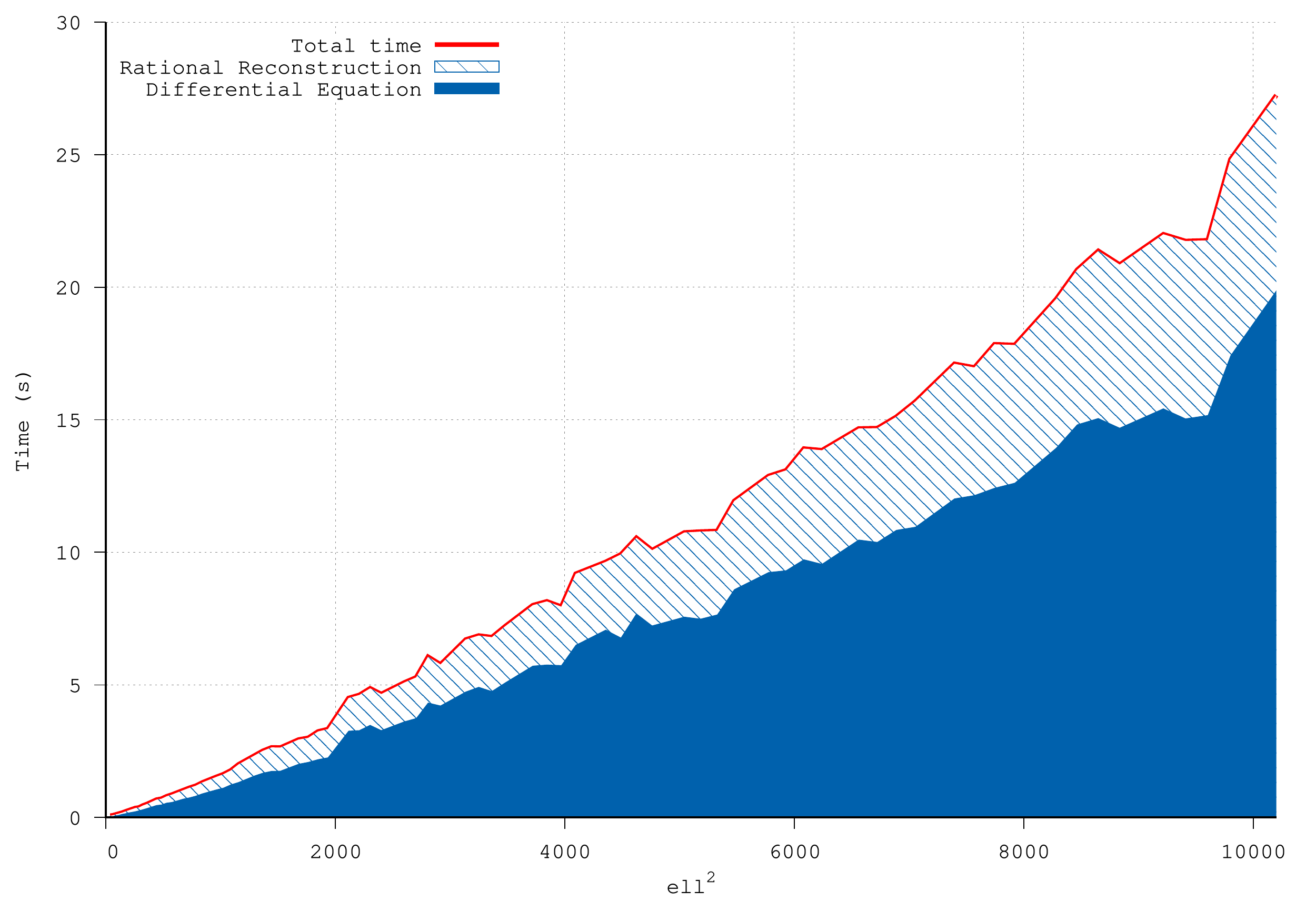}
        \caption{$g=4$}
    \end{subfigure}%
    ~
    \begin{subfigure}[t]{0.5\textwidth}
        \centering
        \includegraphics[width=.62\textwidth, angle=-90]{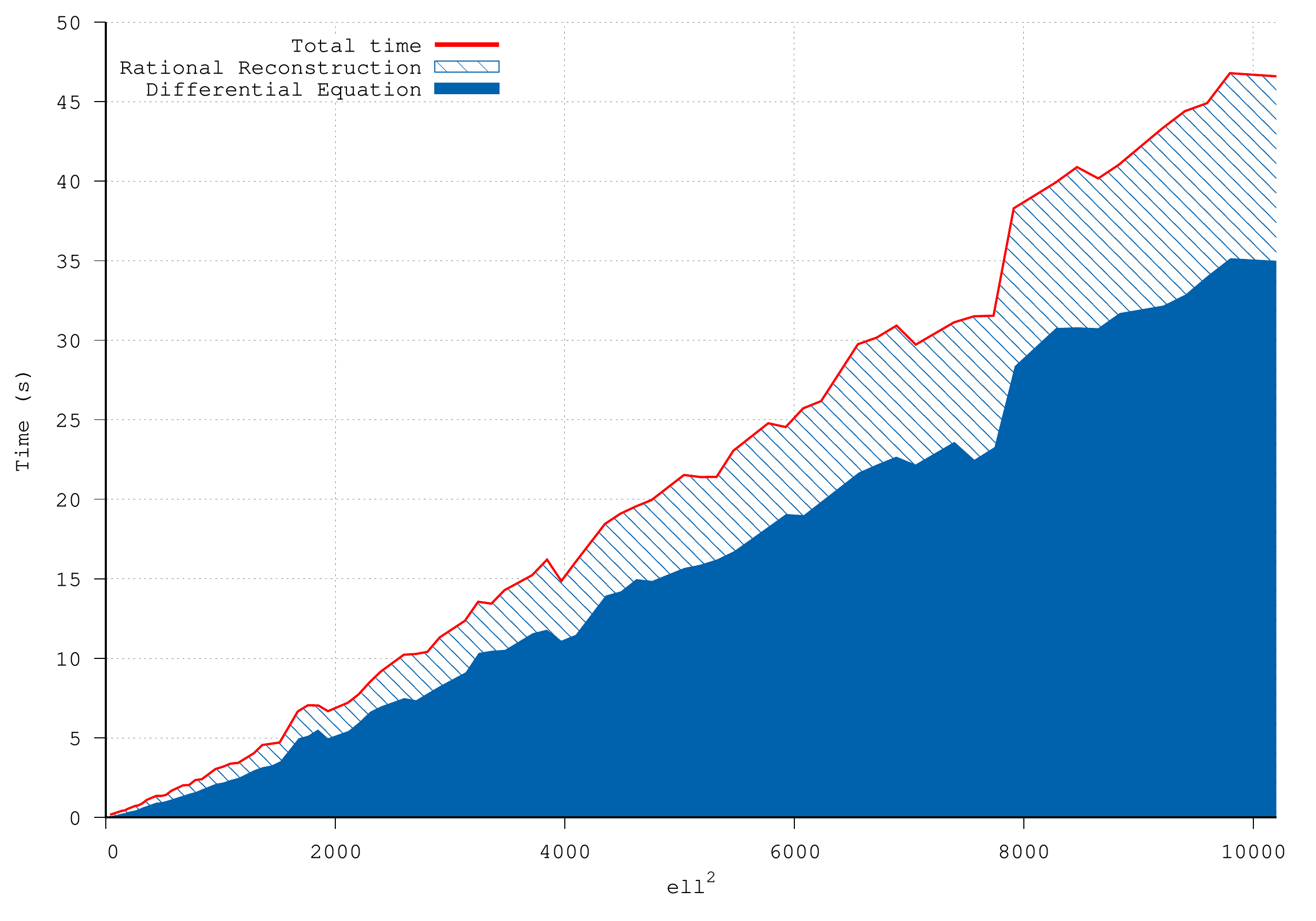}
        \caption{$g=5$}
    \end{subfigure}
    ~
    \newpage
    \centering
     \begin{subfigure}[t]{0.5\textwidth}
        \centering
        \includegraphics[width=.62\textwidth, angle=-90]{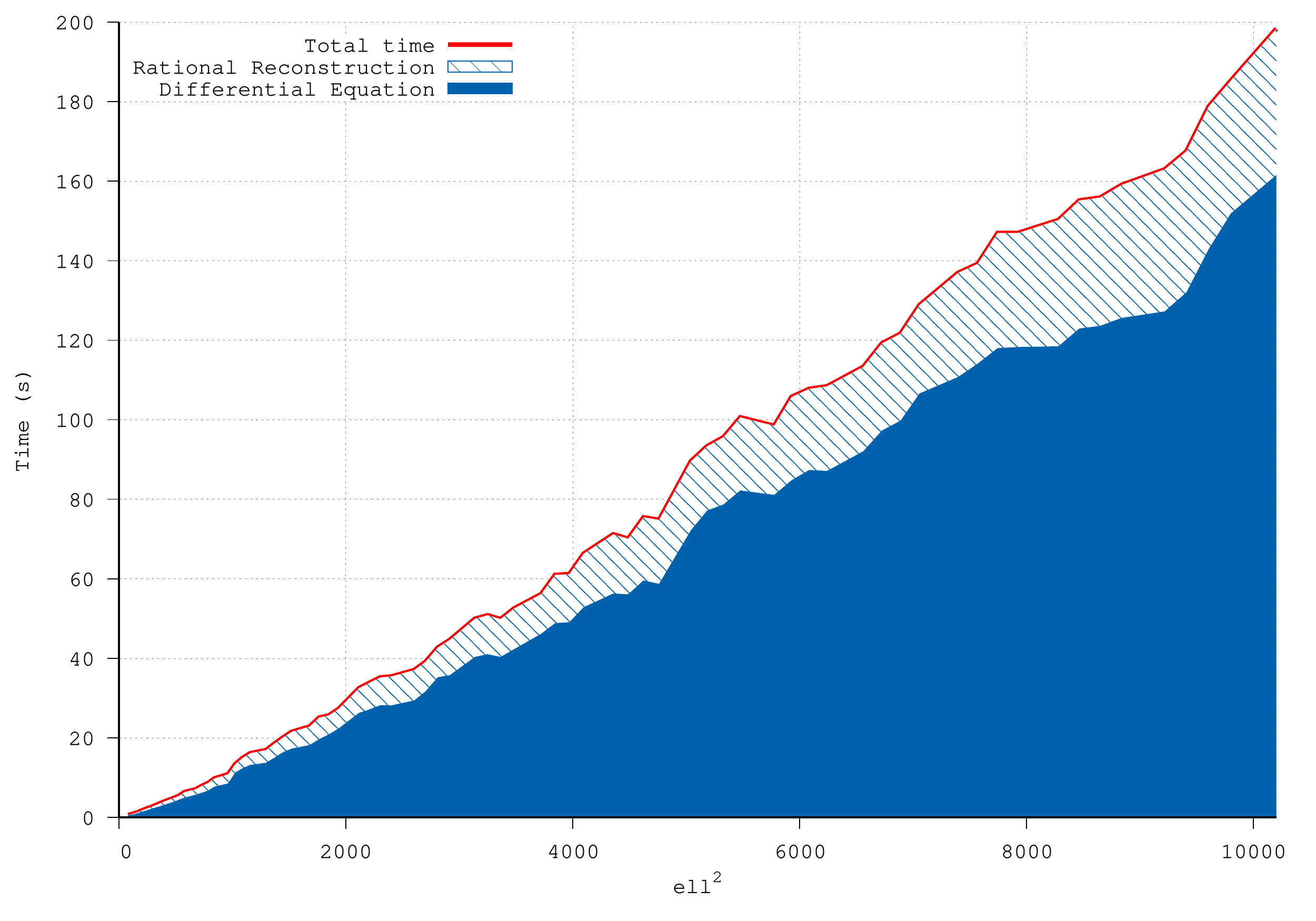}
        \caption{$g=8$}
    \end{subfigure}%
    ~
    \begin{subfigure}[t]{0.5\textwidth}
        \centering
        \includegraphics[width=.62\textwidth, angle=-90]{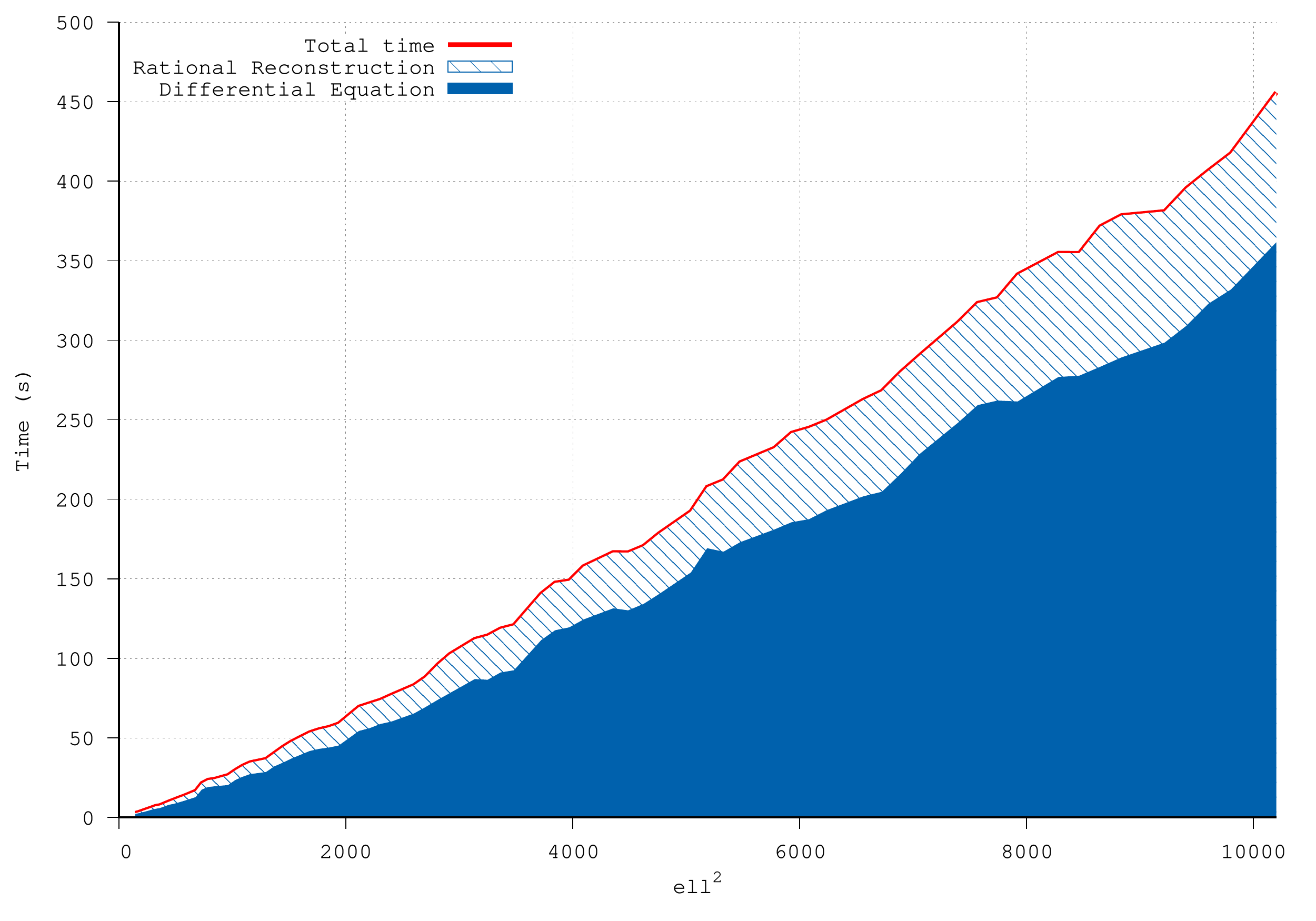}
        \caption{$g=11$}
    \end{subfigure}

    \medskip

  {\scriptsize%
  Timings obtained with \textsc{magma
  V2.25-7} on a laptop with an \textsc{intel} processor
  \textsc{E5-2687WV4@3.00ghz}}

    \caption{Computation of the multiplication-by-$\ell$ map over $\mathbb{F}_5$ for $\ell \in \{g+1 , \ldots , 101\}$ and such that gcd$(\ell , 5)=1$ \label{fig:timingsCantor} }
\end{figure*}

\begin{figure*}[http]

    \centering
    
        \includegraphics[width=.4\textwidth, angle=-90]{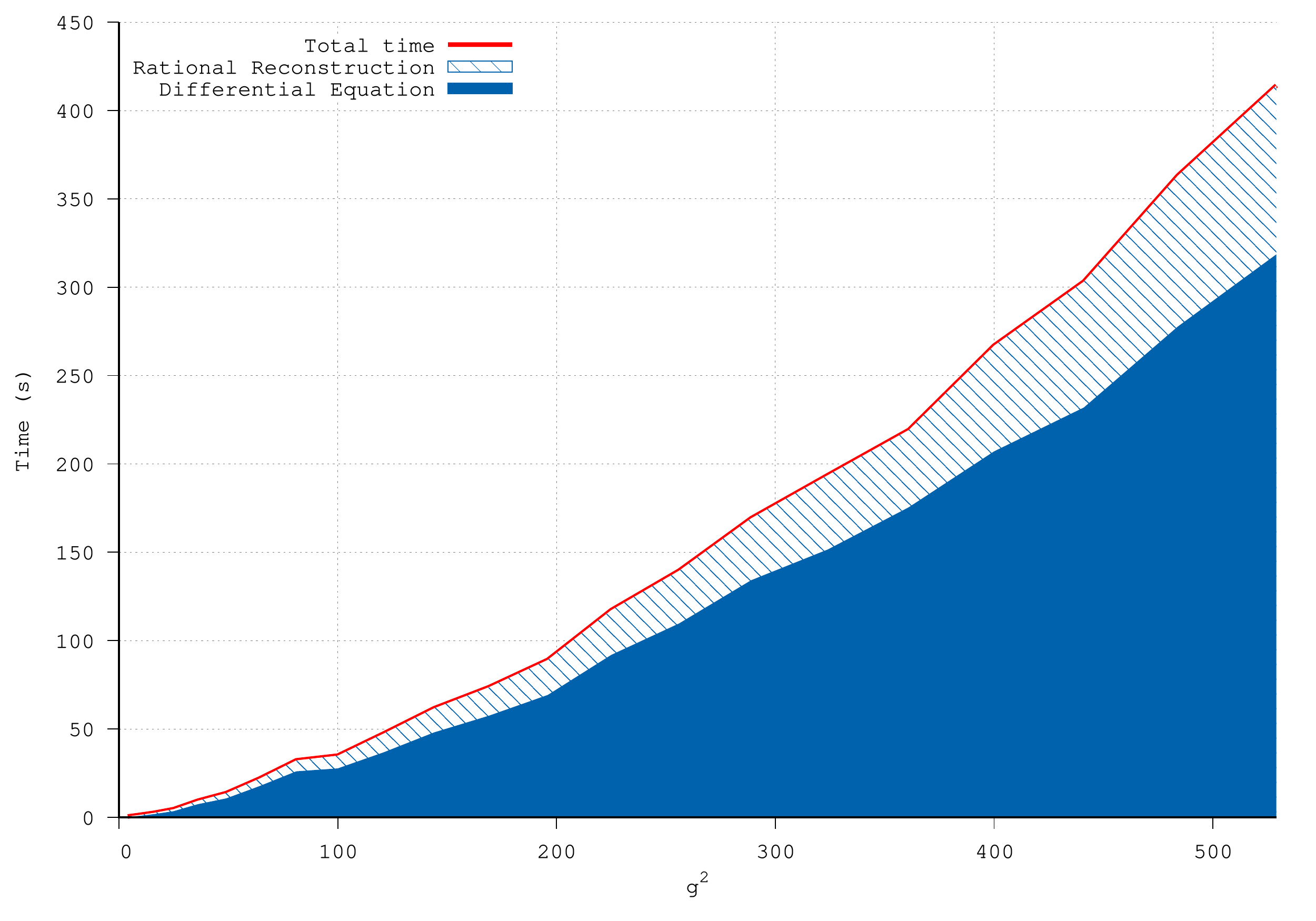}
           
    \medskip

  {\scriptsize%
  Timings obtained with \textsc{magma
  V2.25-7} on a laptop with an \textsc{intel} processor
  \textsc{E5-2687WV4@3.00ghz}}

    \caption{Computation of the multiplication-by-$31$ map over $\mathbb{F}_5$ for $g \in \{2 , \ldots , 30\}$.
     \label{fig:timingsg} }
\end{figure*}

\bibliographystyle{elsarticle-num}

\bibliography{synthbib.bib}

\begin{thebibliography}{10}
\expandafter\ifx\csname url\endcsname\relax
  \def\url#1{\texttt{#1}}\fi
\expandafter\ifx\csname urlprefix\endcsname\relax\def\urlprefix{URL }\fi
\expandafter\ifx\csname href\endcsname\relax
  \def\href#1#2{#2} \def\path#1{#1}\fi

\bibitem{Schoof85}
R.~Schoof, \href{https://doi.org/10.2307/2007968}{Elliptic curves over finite
  fields and the computation of square roots mod {$p$}}, Math. Comp. 44~(170)
  (1985) 483--494.
\newblock \href {http://dx.doi.org/10.2307/2007968}
  {\path{doi:10.2307/2007968}}.
\newline\urlprefix\url{https://doi.org/10.2307/2007968}

\bibitem{Atkin}
A.~O.~L. Atkin, The number of points on an elliptic curve modulo a prime,
  manuscript, Chicago IL (1988).

\bibitem{Morain95}
F.~Morain, \href{http://www.jstor.org/stable/43972443}{Calcul du nombre de
  points sur une courbe elliptique dans un corps fini: aspects algorithmiques},
  Journal de Théorie des Nombres de Bordeaux 7~(1) (1995) 255--282.
\newline\urlprefix\url{http://www.jstor.org/stable/43972443}

\bibitem{Schoof95}
R.~Schoof, \href{www.numdam.org/item/JTNB_1995__7_1_219_0/}{Counting points on
  elliptic curves over finite fields}, Journal de Th\'eorie des Nombres de
  Bordeaux 7~(1) (1995) 219--254.
\newline\urlprefix\url{www.numdam.org/item/JTNB_1995__7_1_219_0/}

\bibitem{Pila90}
J.~Pila, Frobenius maps of abelian varieties and finding roots of unity in
  finite fields, Mathematics of Computation 55 (1990) 745--763.

\bibitem{Gaudry00}
P.~Gaudry, R.~Harley, Counting points on hyperelliptic curves over finite
  fields, in: W.~Bosma (Ed.), Algorithmic Number Theory, Springer Berlin
  Heidelberg, Berlin, Heidelberg, 2000, pp. 313--332.

\bibitem{Gaudry04}
P.~Gaudry, {\'E}.~Schost, Construction of secure random curves of genus 2 over
  prime fields, in: C.~Cachin, J.~L. Camenisch (Eds.), Advances in Cryptology -
  EUROCRYPT 2004, Springer Berlin Heidelberg, Berlin, Heidelberg, 2004, pp.
  239--256.

\bibitem{abelard}
S.~Abelard, \href{https://hal.inria.fr/hal-01905580}{{Counting points on
  hyperelliptic curves with explicit real multiplication in arbitrary genus}},
  {Journal of Complexity}\href {http://dx.doi.org/10.1016/j.jco.2019.101440}
  {\path{doi:10.1016/j.jco.2019.101440}}.
\newline\urlprefix\url{https://hal.inria.fr/hal-01905580}

\bibitem{bomosasc08}
A.~Bostan, F.~Morain, B.~Salvy, E.~Schost,
  \href{https://doi.org/10.1090/S0025-5718-08-02066-8}{Fast algorithms for
  computing isogenies between elliptic curves}, Math. Comp. 77~(263) (2008)
  1755--1778.
\newblock \href {http://dx.doi.org/10.1090/S0025-5718-08-02066-8}
  {\path{doi:10.1090/S0025-5718-08-02066-8}}.
\newline\urlprefix\url{https://doi.org/10.1090/S0025-5718-08-02066-8}

\bibitem{lava16}
P.~Lairez, T.~Vaccon, On {$p$}-adic differential equations with separation of
  variables, in: Proceedings of the 2016 {ACM} {I}nternational {S}ymposium on
  {S}ymbolic and {A}lgebraic {C}omputation, ACM, New York, 2016, pp. 319--323.

\bibitem{couezo15}
J.-M. Couveignes, T.~Ezome, Computing functions on jacobians and their
  quotients, LMS Journal of Computation and Mathematics 18~(1) (2015)
  555–577.
\newblock \href {http://dx.doi.org/10.1112/S1461157015000169}
  {\path{doi:10.1112/S1461157015000169}}.

\bibitem{careidler20}
X.~Caruso, E.~Eid, R.~Lercier,
  \href{https://hal.archives-ouvertes.fr/hal-02508825}{{Fast computation of
  elliptic curve isogenies in characteristic two}}, working paper or preprint
  (Mar. 2020).
\newline\urlprefix\url{https://hal.archives-ouvertes.fr/hal-02508825}

\bibitem{Eid2021}
E.~Eid, \href{http://www.theses.fr/2021REN1S012}{Sur le calcul d'isogénies par
  résolution d'équations différentielles p-adiques}, Ph.D. thesis, thèse de
  doctorat dirigée par Lercier, Reynald et Caruso, Xavier Mathématiques et
  leurs interactions Rennes 1 2021 (2021).
\newline\urlprefix\url{http://www.theses.fr/2021REN1S012}

\bibitem{kieffer20}
J.~Kieffer, A.~Page, D.~Robert,
  \href{https://hal.archives-ouvertes.fr/hal-02436133}{{Computing isogenies
  from modular equations between Jacobians of genus 2 curves}}, working paper
  or preprint (Jan. 2020).
\newline\urlprefix\url{https://hal.archives-ouvertes.fr/hal-02436133}

\bibitem{lesi08}
R.~Lercier, T.~Sirvent,
  \href{http://jtnb.cedram.org/item?id=JTNB_2008__20_3_783_0}{On {E}lkies
  subgroups of {$l$}-torsion points in elliptic curves defined over a finite
  field}, J. Th\'{e}or. Nombres Bordeaux 20~(3) (2008) 783--797.
\newline\urlprefix\url{http://jtnb.cedram.org/item?id=JTNB_2008__20_3_783_0}

\bibitem{oort1986}
F.~OORT, T.~SEKIGUCHI, \href{https://doi.org/10.2969/jmsj/03830427}{The
  canonical lifting of an ordinary jacobian variety need not be a jacobian
  variety}, J. Math. Soc. Japan 38~(3) (1986) 427--437.
\newblock \href {http://dx.doi.org/10.2969/jmsj/03830427}
  {\path{doi:10.2969/jmsj/03830427}}.
\newline\urlprefix\url{https://doi.org/10.2969/jmsj/03830427}

\bibitem{caruso17}
X.~Caruso,
  \href{https://ccirm.centre-mersenne.org/item/CCIRM_2017__5_1_A2_0}{Computations
  with $p$-adic numbers}, Les cours du CIRM 5~(1).
\newblock \href {http://dx.doi.org/10.5802/ccirm.25}
  {\path{doi:10.5802/ccirm.25}}.
\newline\urlprefix\url{https://ccirm.centre-mersenne.org/item/CCIRM_2017__5_1_A2_0}

\bibitem{vaccon15}
T.~Vaccon, Précision p-adique: applications en calcul formel, théorie des
  nombres et cryptographie, Ph.D. thesis, University of Rennes 1 (2015).

\bibitem{KU11}
K.~S. Kedlaya, C.~Umans, \href{https://doi.org/10.1137/08073408X}{Fast
  polynomial factorization and modular composition}, SIAM J. Comput. 40~(6)
  (2011) 1767--1802.
\newblock \href {http://dx.doi.org/10.1137/08073408X}
  {\path{doi:10.1137/08073408X}}.
\newline\urlprefix\url{https://doi.org/10.1137/08073408X}

\bibitem{Cantor}
D.~G. Cantor, \href{https://doi.org/10.1515/crll.1994.447.91}{On the analogue
  of the division polynomials for hyperelliptic curves.} 1994~(447) (1994)
  91--146.
\newblock \href {http://dx.doi.org/doi:10.1515/crll.1994.447.91}
  {\path{doi:doi:10.1515/crll.1994.447.91}}.
\newline\urlprefix\url{https://doi.org/10.1515/crll.1994.447.91}

\bibitem{abela18}
S.~Abelard, \href{http://www.theses.fr/2018LORR0104}{Comptage de points de
  courbes hyperelliptiques en grande caractéristique : algorithmes et
  complexité}, Ph.D. thesis, thèse de doctorat dirigée par Gaudry, Pierrick
  et Spaenlehauer, Pierre-Jean Informatique Université de Lorraine 2018
  (2018).
\newline\urlprefix\url{http://www.theses.fr/2018LORR0104}

\bibitem{Bostan17}
A.~Bostan, F.~Chyzak, M.~Giusti, R.~Lebreton, G.~Lecerf, B.~Salvy,
  {\'E}.~Schost, Algorithmes efficaces en calcul formel, 2017.

\bibitem{carova14}
X.~Caruso, D.~Roe, T.~Vaccon,
  \href{https://doi.org/10.1112/S1461157014000357}{Tracking {$p$}-adic
  precision}, LMS J. Comput. Math. 17~(suppl. A) (2014) 274--294.
\newblock \href {http://dx.doi.org/10.1112/S1461157014000357}
  {\path{doi:10.1112/S1461157014000357}}.
\newline\urlprefix\url{https://doi.org/10.1112/S1461157014000357}

\bibitem{carova15}
X.~Caruso, D.~Roe, T.~Vaccon, {$p$}-adic stability in linear algebra, in:
  I{SSAC}'15---{P}roceedings of the 2015 {ACM} {I}nternational {S}ymposium on
  {S}ymbolic and {A}lgebraic {C}omputation, ACM, New York, 2015, pp. 101--108.

\bibitem{milne86}
J.~S. Milne, Abelian varieties, in: Arithmetic geometry, Springer, 1986, pp.
  103--150.

\bibitem{matsusaka59}
T.~Matsusaka, On a characterization of a {J}acobian variety, 1959.

\bibitem{NewtonSums}
A.~Bostan, L.~Gonz{\'a}lez-Vega, H.~Perdry, {\'E}.~Schost, From {N}ewton sums
  to coefficients: complexity issues in characteristic $p$, in: MEGA'05, 2005,
  eighth International Symposium on Effective Methods in Algebraic Geometry,
  Porto Conte, Alghero, Sardinia (Italy), May 27th -- June 1st.

\bibitem{Kaltofen}
E.~Kaltofen, L.~Yagati,
  \href{https://doi.org/10.1007/3-540-51084-2_44}{Improved sparse multivariate
  polynomial interpolation algorithms}, in: Symbolic and algebraic computation
  ({R}ome, 1988), Vol. 358 of Lecture Notes in Comput. Sci., Springer, Berlin,
  1989, pp. 467--474.
\newblock \href {http://dx.doi.org/10.1007/3-540-51084-2\_44}
  {\path{doi:10.1007/3-540-51084-2\_44}}.
\newline\urlprefix\url{https://doi.org/10.1007/3-540-51084-2_44}

\bibitem{thome03}
E.~Thom\'{e}, Algorithmes de calcul de logarithmes discrets dans les corps
  finis, Ph.D. thesis, \'Ecole polytechnique (2003).

\bibitem{magma}
W.~Bosma, J.~Cannon, C.~Playoust,
  \href{http://dx.doi.org/10.1006/jsco.1996.0125}{The {M}agma algebra system.
  {I}. {T}he user language}, J. Symbolic Comput. 24~(3-4) (1997) 235--265,
  computational algebra and number theory (London, 1993).
\newblock \href {http://dx.doi.org/10.1006/jsco.1996.0125}
  {\path{doi:10.1006/jsco.1996.0125}}.
\newline\urlprefix\url{http://dx.doi.org/10.1006/jsco.1996.0125}

\bibitem{github}
E.~Eid, Package \texttt{Equadif\_Solver},
  \url{https://github.com/eeid95/HyperellipticIsogeny} (2022).

\end{thebibliography}

\end{document}